\newcommand{\mathsmaller}[1]{#1}
\newcommand{\C}{\mathbb C}
\newcommand{\F}{\mathbb F}
\newcommand{\R}{\mathbb R}
\newcommand{\N}{\mathbb N}
\newcommand{\Z}{\mathbb Z}
\newcommand{\Span}{\mathrm{span }}
\newcommand{\supp}{\mathrm{supp }}
\newcommand{\Supp}{\mathrm{supp}}
\newtheorem{thm}{Theorem}[section]
\newtheorem{pro}[thm]{Proposition}
\newtheorem{cor}[thm]{Corollary}
\newtheorem{lem}[thm]{Lemma}
\theoremstyle{definition}
\newtheorem{rem}[thm]{Remark}
\newtheorem{defn}[thm]{Definition}
\begin{document}


\title[\tiny A computational approach to the Thompson group $F$]{\large  A \MakeLowercase{computational approach to the} T\MakeLowercase{hompson group} $F$}

\author[\tiny S. Haagerup\qquad U. Haagerup\qquad M. Ramirez-Solano]{ S. Haagerup\,\,\,\,\,\,\, U. Haagerup$^*$\,\,\,\,\,\,\, M. Ramirez-Solano$^\dag$}
\date{February 5, 2015}

\thanks{$^*$ Supported by the ERC Advanced Grant no. OAFPG 247321, and partially supported by the Danish National Research Foundation (DNRF) through the Centre for Symmetry and Deformation at the University of Copenhagen, and the Danish Council for Independent Research, Natural Sciences.}
\thanks{$^\dag$ Supported by the ERC Advanced Grant no. OAFPG 247321, and by the Center for Experimental Mathematics at the University of Copenhagen.}

\newcommand{\Addresses}{{
  \bigskip
  \footnotesize
  S\o ren Haagerup, \textsc{Elmebakken 15, 5260 Odense S, Denmark.}\par\nopagebreak
  \textit{E-mail address}: \texttt{soren@artofweb.dk}

  \medskip
  \medskip

  Uffe Haagerup, \textsc{Department of Mathematical Sciences, University of Copenhagen, Universitetsparken 5, 2100 K\o benhavn \O, Denmark.}\par\nopagebreak
  \textit{E-mail address}: \texttt{haagerup@math.ku.dk}

  \medskip
  \medskip

  Maria Ramirez-Solano, \textsc{Department of Mathematical Sciences, University of Copenhagen, Universitetsparken 5, 2100 K\o benhavn \O, Denmark.}\par\nopagebreak
  \textit{E-mail address}: \texttt{mrs@math.ku.dk}
}}

\begin{abstract}
Let $F$ denote the Thompson group with standard generators $A=x_0$, $B=x_1$. It is a long standing open  problem whether $F$ is an amenable group.
By a result of Kesten from 1959, amenability of $F$ is equivalent to
$$(i)\qquad ||I+A+B||=3$$ and to
$$(ii)\qquad ||A+A^{-1}+B+B^{-1}||=4,$$
where in both cases the norm of an element in the group ring $\C F$ is computed in $B(\ell^2(F))$ via the regular representation of $F$.
By extensive numerical computations, we obtain precise lower bounds for the norms in $(i)$ and $(ii)$, as well as good estimates of the spectral distributions of $(I+A+B)^*(I+A+B)$ and of $A+A^{-1}+B+B^{-1}$ with respect to the tracial state $\tau$ on the group von Neumann Algebra $L(F)$.
Our computational results suggest, that
$$||I+A+B||\approx 2.95 \qquad ||A+A^{-1}+B+B^{-1}||\approx 3.87.$$
It is however hard to obtain precise upper bounds for the norms, and our methods cannot be used to prove non-amenability of $F$.
\end{abstract}

\keywords{Thompson's group $F$; estimating norms in group $C^*$-algebras; amenability; Leinert sets; cogrowth; computer calculations.}

\subjclass[2010]{ 20F65, 20-04, 43A07, 22D25, 46L05.}
\maketitle

\section{\textbf{Introduction}}\label{s:one}

\begin{defn}

  The Thompson group $F$ is the group of homeomorphisms $g:[0,1]\to[0,1]$ for which:
  \begin{itemize}
    \item the endpoints satisfy $g(0)=0$, $g(1)=1$,
    \item it is piecewise linear with finitely many break points on the diadic numbers $\Z[\tfrac 12]\cap (0,1)$,
    \item all slopes of $g$ are in the set $2^{\Z}:=\{2^n\mid n\in\Z\}$.
  \end{itemize}
\end{defn}
$F$ is a countable group, and it is generated by the elements $A,B$ whose graphs are shown in Fig.~\ref{f:generatorsABofF}.
\begin{figure}[h]
\centerline{
\includegraphics[scale=.50]{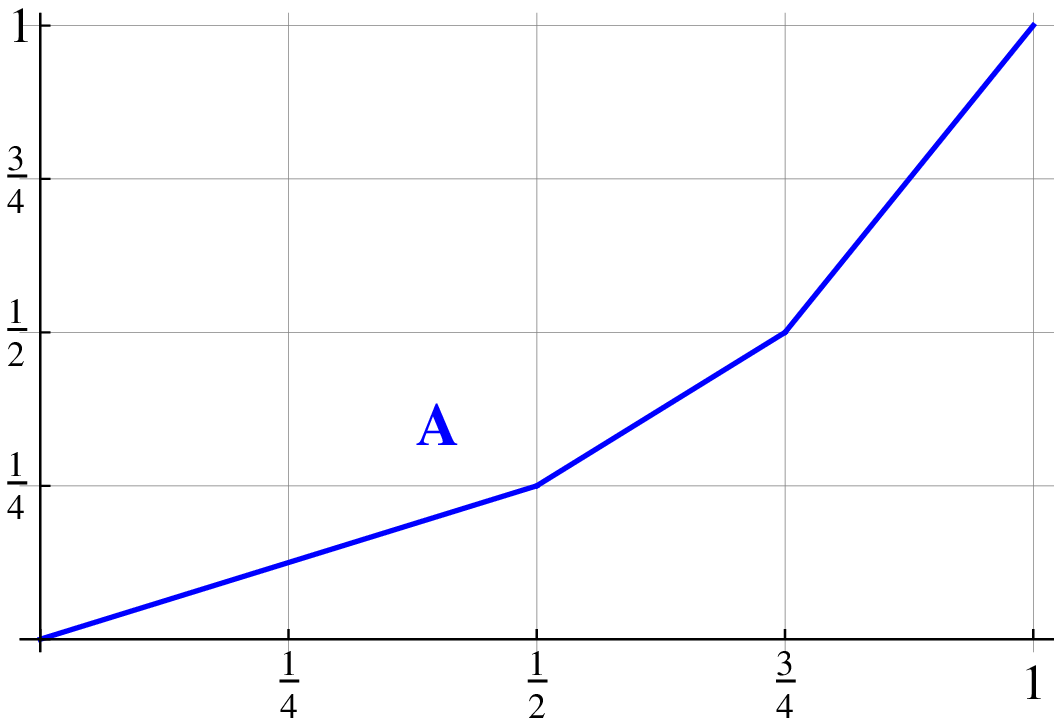}\qquad
\includegraphics[scale=.50]{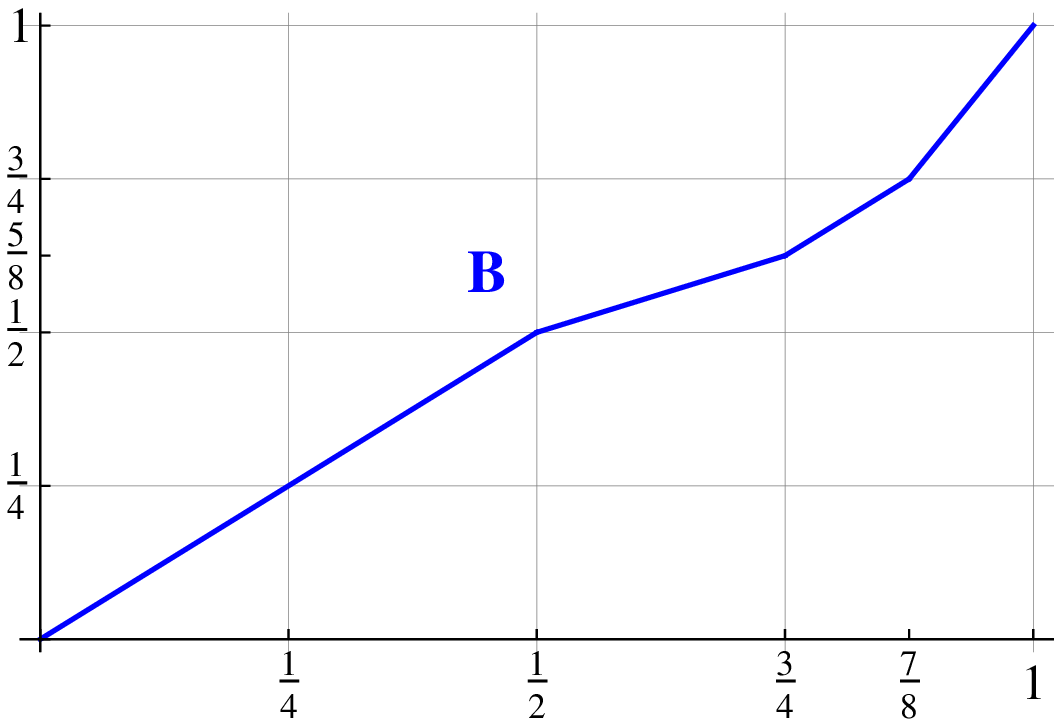}
}
\vspace*{8pt}
  \caption{The generators $A,B$ of $F$.\label{f:generatorsABofF}}
\end{figure}

Moreover, it has a finite presentation in terms of $A,B$, namely
$$F=\langle A,B\mid [AB^{-1}, A^{-1}BA]=[AB^{-1},A^{-2}BA^2]=e\rangle,$$
where the group commutator $[g,h]=ghg^{-1}h^{-1}$ is defined as usual.

Recall that elementary amenability implies amenability, and a copy of the free group $\F_2$ (on two generators) inside a group implies non-amenability of the group.
It is known that $F$ is not elementary amenable, i.e.  $F$ cannot be obtained from finite or Abelian groups by taking subgroups, quotients, extensions, and direct limits.  On the other hand, by a result of Brin and Squier \cite{BrinS}, $F$ does not contain a copy of $\F_2$.
For more information on the Thompson group $F$, see the survey paper by Cannon, Floyd and Perry \cite{CFP}.

It is a main open problem to decide whether the Thompson group $F$ is amenable.  Recently, Monod \cite{Monod} has constructed examples of groups of homeomorphisms of $[0,1]$ which are non-amenable, but which also do not contain a copy of $\F_2$. These groups resemble $F$.  Moreover, Olesen and the second named author has shown in \cite{HO} that if the reduced $C^*$-algebra $C_r^*(T)$ of the (non-amenable) Thompson group $T$  is simple, then $F$ is non-amenable.
Both of the above mentioned results suggest that $F$ might not be amenable, and extrapolations of our computational results point in the same (non-amenability) direction.

The present paper grew out of an attempt to test the amenability problem for $F$ by using computers to estimate norms of certain elements in the group ring $\C F$ of $F$. By the norm $||a||$ (see also Section~\ref{s:two}) of an element $a$ in the group ring of a discrete group $\Gamma$ we mean
\begin{equation}
  ||a||=||\lambda(a)||_{B(\ell^2(\Gamma))},
\end{equation}
where $\lambda$ is the left regular representation of $\Gamma$.
As explained in Section~\ref{s:two}, it is standard to write  $a\in B(\ell^2(\Gamma))$ instead of $\lambda(a)\in B(\ell^2(\Gamma))$, for any $a\in \C F$, and we will continue with this tradition.
Our starting point is the following two theorems due to Kesten and Lehner: (See Section~\ref{s:six} for a more detailed discussion).

\begin{thm}[\cite{Ke2},\cite{Leh}]\label{t:1.2}

  Let $\Gamma$ be a discrete group with a generating set $X=\{s_1,\ldots,s_k\}$ such that $k\ge 2$ and $e\not\in X$. Then,
  $$2\sqrt k \le ||e+s_1+\dots s_k||\le k+1.$$
  Moreover, the upper bound is attained if and only if $\Gamma$ is amenable, and the lower bound is attained if and only if $X$ generates $\Gamma$ freely.
\end{thm}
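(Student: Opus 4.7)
\emph{Upper bound.} The inequality $\|e+s_1+\cdots+s_k\|\le k+1$ is immediate from the triangle inequality, since each $\lambda(s_i)$ is unitary in the regular representation. For the ``$=k+1$ iff amenable'' half, the forward direction uses Hulanicki's theorem: when $\Gamma$ is amenable the trivial character $\varepsilon\colon\C\Gamma\to\C$, $g\mapsto 1$, extends to a state on $C^*_r(\Gamma)$, so $\|y\|\ge|\varepsilon(y)|=k+1$. For the converse, assume $\|y\|=k+1$, i.e.\ $\|y^*y\|=(k+1)^2$, and extract via the spectral theorem (and Hahn--Banach) a state $\phi$ on $C^*_r(\Gamma)$ with $\phi(y^*y)=(k+1)^2$. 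Writing $y^*y=\sum_{i,j=0}^{k}s_i^{-1}s_j$ with $s_0:=e$, and using $|\phi(s_i^{-1}s_j)|\le1$, this forces $\phi(s_i^{-1}s_j)=1$ for every $i,j$; the Cauchy--Schwarz equality case in the GNS data $(\pi_\phi,\xi_\phi)$ yields $\pi_\phi(s_i)\xi_\phi=\xi_\phi$, and since $\{s_i\}$ generates $\Gamma$, the trivial representation is weakly contained in $\lambda$. Hence $\Gamma$ is amenable.

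\emph{Lower bound via comparison with $\F_k$.} Let $\pi\colon\F_k\to\Gamma$ be the canonical surjection from the free group on $t_1,\dots,t_k$ sending $t_i\mapsto s_i$, and set $\tilde y=e+t_1+\cdots+t_k\in\C\F_k$. Unwinding the trace gives
\[
\tau_\Gamma\bigl((y^*y)^n\bigr)=\#\bigl\{(i_1,j_1,\dots,i_n,j_n)\in\{0,\dots,k\}^{2n}\colon s_{i_1}^{-1}s_{j_1}\cdots s_{i_n}^{-1}s_{j_n}=e\text{ in }\Gamma\bigr\},
\]
and every tuple satisfying this equation in $\F_k$ still satisfies it in $\Gamma$ under $\pi$, so $\tau_\Gamma((y^*y)^n)\ge\tau_{\F_k}((\tilde y^*\tilde y)^n)$. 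Taking $2n$-th roots and applying $\|x\|=\lim_n\tau(x^n)^{1/n}$ to the positive operators $y^*y$ and $\tilde y^*\tilde y$ gives $\|y\|_\Gamma\ge\|\tilde y\|_{\F_k}$. One checks that $\{e,t_1,\dots,t_k\}$ satisfies the Leinert condition in $\F_k$ --- any alternating product $x_1x_2^{-1}x_3x_4^{-1}\cdots$ with $x_\ell\ne x_{\ell+1}$ admits no reduction, since an adjacent cancellation would equate two consecutive $x_\ell$'s --- and Leinert's theorem then gives $\|\tilde y\|_{\F_k}=2\sqrt k$.

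\emph{Equality in the lower bound, and the main obstacle.} If $X$ freely generates $\Gamma$ then $\Gamma\cong\F_k$ and the above gives equality. Conversely, assume $\|y\|_\Gamma=2\sqrt k$; any nontrivial element of $\ker\pi$ produces tuples beyond those available in $\F_k$, so the moment inequality is strict from some $n$ onward. This alone is not enough: adding a bounded positive perturbation to every moment preserves the spectral radius, so a strict moment gap need not yield a strict norm gap. To close this gap I would invoke Grigorchuk's cogrowth theorem applied to the associated walk $\mu=(k+1)^{-1}y$ (or a suitable non-symmetric analogue), which shows that the spectral radius on the quotient $\F_k/N$ strictly exceeds that on $\F_k$ whenever $N\ne\{e\}$. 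This quantitative cogrowth step is the main technical obstacle: the moment comparison is soft, and a genuinely new input --- either a cogrowth estimate or detailed knowledge of the explicit square-root-edge spectral density of $\tilde y^*\tilde y$ on $\F_k$ at the right endpoint $4k$ --- is required to rule out $\|y\|_\Gamma=2\sqrt k$ for every non-free $\Gamma$.
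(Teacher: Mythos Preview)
Your argument is essentially correct where you complete it, and it parallels the paper's strategy closely: the paper also derives Theorem~1.2 from the more general Theorem~\ref{t:LowerboundLeinerSetsUpperBoundAmenability}, using Lehner's norm-monotonicity under quotients for the lower bound (your moment comparison is precisely a hands-on version of this), Kesten's characterization applied to $h^*h$ for the upper bound equality, and the Akemann--Ostrand observation that $\{e\}\cup X$ is a Leinert set exactly when $X$ freely generates. Your upper-bound equality argument via a state and Cauchy--Schwarz is a pleasant direct substitute for the paper's appeal to Kesten on the symmetric element $h^*h$.

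The gap you flag is real, and the paper does not fill it either: for the implication ``$\|y\|=2\sqrt{k}\Rightarrow X$ freely generates,'' the paper simply invokes Theorem~9 of Lehner~\cite{Leh}. Your instinct to reach for cogrowth is exactly right, and in fact the machinery is developed later in the same paper: Theorem~\ref{t:2.6} shows that if $Y$ is \emph{not} a Leinert set then the cogrowth-type quantity $\gamma=\tfrac12(\|h\|+\sqrt{\|h\|^2-4q})$ is strictly larger than $\sqrt{q}$, forcing $\|h\|>2\sqrt{q}$. So the ``quantitative cogrowth step'' you describe is not just one option among several --- it is the argument the paper ultimately supplies, via the formula of Proposition~\ref{p:etazetaInTermsOfChebyshevPols} expressing $\zeta_n$ (hence $\eta_n$) in terms of $\mu_{\tilde h}$ and Chebyshev polynomials. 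Once $\eta_n>0$ for some $n$ (equivalently, $Y$ is not Leinert), the Chebyshev growth on $[1,\infty)$ together with $\pm\|h\|\in\mathrm{supp}(\mu_{\tilde h})$ forces $\liminf\eta_n^{1/2n}>\sqrt{q}$, and then $\|h\|=\gamma+q/\gamma>2\sqrt{q}$. This is the missing ingredient; your moment comparison alone, as you correctly note, cannot separate the spectral radii.
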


\begin{thm}[\cite{Ke2},\cite{Kesten}]\label{t:1.3}

  Let $\Gamma$ be a discrete group with a generating set $X=\{s_1,\ldots,s_k\}$ such that $k\ge 2$ and $X\cap X^{-1}=\emptyset$. Then,
  $$2\sqrt {2k-1 }\le ||s_1+\dots s_k+s_1^{-1}+\cdots+s_k^{-1}||\le 2k.$$
  Moreover, the upper bound is attained if and only if $\Gamma$ is amenable, and the lower bound is attained if and only if $X$ generates $\Gamma$ freely.
\end{thm}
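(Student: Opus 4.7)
My plan is to extract both bounds from the spectral radius of the self-adjoint element
\begin{equation*}
T = s_1 + \cdots + s_k + s_1^{-1} + \cdots + s_k^{-1} \in \C\Gamma.
\end{equation*}
Since $T = T^*$ in $L(\Gamma)$, its operator norm equals its spectral radius, so
\begin{equation*}
||T|| = \lim_{n\to\infty} \tau(T^{2n})^{1/(2n)},
\end{equation*}
where $\tau$ is the canonical trace on $L(\Gamma)$. Moreover $\tau(T^{2n})$ counts the number of length-$2n$ words in $s_1^{\pm 1},\dots,s_k^{\pm 1}$ that represent $e$ in $\Gamma$; equivalently, the number of closed walks of length $2n$ based at the identity in the Cayley graph with respect to $X\cup X^{-1}$.

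For the upper bound, the triangle inequality together with $||s_i||=1$ (each $s_i$ is unitary) gives $||T||\le 2k$ at once. Equality $||T||=2k$ is Kesten's classical amenability criterion: it is equivalent to the symmetric random walk operator $\tfrac{1}{2k}T$ having spectral radius one, and since $\tfrac{1}{2k}\sum(s_i+s_i^{-1})$ is a symmetric probability measure whose support generates $\Gamma$, this in turn characterizes amenability of $\Gamma$.

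For the lower bound, I would compare $\Gamma$ with the free group via the canonical surjection $\pi\colon \F_k\twoheadrightarrow\Gamma$ sending the free generators to $s_1,\dots,s_k$. Writing $T_0$ for the analogous element in $\C\F_k$, any word of length $2n$ representing $e$ in $\F_k$ also represents $e$ in $\Gamma$, yielding the purely combinatorial inequality
\begin{equation*}
\tau_\Gamma(T^{2n}) \ge \tau_{\F_k}(T_0^{2n}).
\end{equation*}
Taking $2n$-th roots and letting $n\to\infty$ gives $||T||_\Gamma \ge ||T_0||_{\F_k}$, and the right-hand side equals $2\sqrt{2k-1}$ by Kesten's original computation of the spectrum of the adjacency operator on the $2k$-regular tree (which is the Cayley graph of $\F_k$ with respect to $X\cup X^{-1}$).

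Finally, for the equality cases: one direction is automatic, since if $X$ generates $\Gamma$ freely then $\Gamma\cong\F_k$ and the lower bound is attained. Conversely, if $\ker\pi$ is nontrivial, there is a reduced word $w\ne e$ in $\F_k$ with $\pi(w)=e$, and splicing $w$ and its cyclic rotations into the closed walks counted by $\tau_{\F_k}(T_0^{2n})$ produces additional closed walks in $\Gamma$ that are not closed in $\F_k$. The main obstacle, as in Kesten's original argument, is to upgrade this pointwise strict inequality into strict inequality of the exponential growth rates $\lim \tau(T^{2n})^{1/(2n)}$; that combinatorial step, together with Kesten's foundational spectral-radius characterization of amenability on the upper-bound side, is where the real work lies.
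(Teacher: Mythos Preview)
Your outline is essentially the same route the paper takes, which derives Theorem~1.3 as the special case $Y=X\cup X^{-1}$ (so $q+1=2k$) of its more general Theorem~\ref{t:LowerboundLeinerSetsUpperBoundAmenability}. There the upper bound is again the triangle inequality for unitaries, the upper-bound equality is again Kesten's criterion applied to $h^*h$, and the lower bound is obtained exactly as you do, by pushing forward from a free group via the canonical surjection (the paper cites Lehner's Propositions~2 and~5 for this, while your moment inequality $\tau_\Gamma(T^{2n})\ge\tau_{\F_k}(T_0^{2n})$ is a pleasant self-contained version of the same comparison).

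The one place where the paper's packaging buys something over yours is the lower-bound equality case. You correctly identify the obstacle: passing from a pointwise strict inequality of return probabilities to a strict inequality of exponential growth rates is nontrivial. The paper sidesteps this combinatorial argument entirely by recasting the condition ``$X$ generates $\Gamma$ freely'' as ``$Y=X\cup X^{-1}$ is a Leinert set'' (this equivalence is recorded just before Theorem~\ref{t:LowerboundLeinerSetsUpperBoundAmenability}), and then invoking Lehner's Theorem~9, which characterizes Leinert sets precisely by the equality $||h||=2\sqrt{q}$. So where you leave a gap and gesture at Kesten's original argument, the paper closes it by citing the Leinert-set machinery instead; neither proof is self-contained at that step, but the Leinert formulation is sharper because it isolates exactly the algebraic condition that forces the norm down to $2\sqrt{q}$.
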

Hence for the Thompson group $F$ we get

\begin{cor}\label{c:1.4}

Let $A$ and $B$ be the standard generators of $F$, and let $I$ denote the unit element of $F$. Then
    $$2\sqrt2<||I+A+B||\le 3,$$
    $$2\sqrt 3<||A+A^{-1}+B+B^{-1}||\le 4.$$
      Moreover, in both cases the upper bound is attained if and only if $F$ is amenable.
\end{cor}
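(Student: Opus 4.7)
The plan is to apply Theorems~\ref{t:1.2} and \ref{t:1.3} directly to the Thompson group $F$ with the generating set $X=\{A,B\}$, so $k=2$. The generators satisfy $A,B\neq I$ and $X\cap X^{-1}=\emptyset$ (since $A$ and $B$ have slopes other than $1$ at the appropriate breakpoints, and neither is the identity or the inverse of the other), so the hypotheses of both theorems are met.

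First, from Theorem~\ref{t:1.2} with $k=2$ I get the chain $2\sqrt{2}\le \|I+A+B\|\le 3$, and from Theorem~\ref{t:1.3} with $k=2$ I get $2\sqrt{3}\le \|A+A^{-1}+B+B^{-1}\|\le 4$. The statement about the upper bounds being attained iff $F$ is amenable is an immediate quotation of the corresponding clauses in Theorems~\ref{t:1.2}--\ref{t:1.3}. So the only remaining task is to upgrade the two lower bounds from $\le$ to strict $<$.

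For the strict inequalities I use the second clause of each theorem: the lower bound is attained iff $X=\{A,B\}$ generates $F$ freely. It therefore suffices to show that $F$ is not free on $A,B$. This is immediate from the finite presentation $F=\langle A,B\mid [AB^{-1},A^{-1}BA]=[AB^{-1},A^{-2}BA^2]=e\rangle$ recalled above: the relator $[AB^{-1},A^{-1}BA]$ is a nontrivial reduced word in the free group on $\{A,B\}$ but is trivial in $F$, so the natural map from $\F_2$ to $F$ has nontrivial kernel. Hence $\{A,B\}$ does not generate $F$ freely, and consequently $2\sqrt{2}<\|I+A+B\|$ and $2\sqrt{3}<\|A+A^{-1}+B+B^{-1}\|$.

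There is no real obstacle here: the corollary is a routine specialization of the two Kesten--Lehner theorems once one observes that $F$ is not free of rank $2$, which is visible from the presentation. The only minor point worth being explicit about is why $A\neq B^{\pm 1}$ and $A,B\neq I$, but this is clear from the graphs in Figure~\ref{f:generatorsABofF}, so no further work is required.
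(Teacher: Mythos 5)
Your proof is correct and follows essentially the same route as the paper, which derives Corollary~\ref{c:1.4} directly from Theorems~\ref{t:1.2} and~\ref{t:1.3} applied to $X=\{A,B\}$ with $k=2$, the strictness of the lower bounds coming from the fact that $\{A,B\}$ does not generate $F$ freely (visible from the defining relators, or alternatively from the Brin--Squier result quoted in the introduction). Your verification of the hypotheses ($I\notin X$, $X\cap X^{-1}=\emptyset$) and the use of the ``lower bound attained iff free'' clause are exactly what the paper leaves implicit.
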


Let $L(\Gamma)$ denote the von Neumann algebra of a discrete group $\Gamma$, i.e. $L(\Gamma)$ is the von Neumann algebra in $B(\ell^2(\Gamma))$ generated by $\lambda(\Gamma)$. Then
\begin{equation}
  \tau(T)=\langle T\delta_e,\delta_e\rangle, \quad T\in L(\Gamma)
\end{equation}
defines a normal faithful tracial state on $L(\Gamma)$ (See e.g. Section 6.7 in \cite{KadRin}). Hence
\begin{equation}
  ||T||=||T^*T||^{1/2}=\lim_{n\to\infty} \tau((T^*T)^n)^{\frac 1{2n}}
\end{equation}
(cf.~Section~\ref{s:four}).
Hence if we knew all the numbers
$$m_n(T^*T):=\tau((T^*T)^n),\quad n\in\N_0$$
we could also compute the norm $||T||$. In practice we can only compute a finite number of the moments $m_n(T^*T)$.

In this paper, we develop efficient methods, both mathematically and computationally to compute the numbers $m_n(T^*T)$  in the case
$$T=\sum_{x\in Y} x \quad \in \C \Gamma $$
for any finite set $Y$ in a discrete group $\Gamma$.
We then apply the methods to the elements $T_1$ and $T_2$ in the group ring $\C F$ of the Thompson group $F$ given by
\begin{equation}\label{e:T1andT2}
  T_1:=I+A+B, \qquad T_2:=A+A^{-1}+B+B^{-1}.
\end{equation}
As a result, we have been able to compute the moments $m_n(T_1^*T_1)$ for $0\le n\le 37$ and the moments $m_n(T_2^*T_2)=m_{2n}(T_2)$ for $0\le n\le 24$.

Using the spectral theorem to the self-adjoint operators
$$\tilde T_i:=\left(
                                                                                 \begin{array}{cc}
                                                                                   0 & T_i^* \\
                                                                                   T_i & 0 \\
                                                                                 \end{array}
                                                                               \right)\in M_2(B(\ell^2(F))),$$
(cf.~Section~\ref{s:two}), one gets that there are unique probability measures $\mu_1,\mu_2$ to $\R$ with $\supp(\mu_i)\subset [-||T_i||,||T_i||]$ such that $\mu_i$ is invariant under the reflection $t\mapsto -t$, and such that
\begin{equation}\label{e:4plus}
  \int_{-||T_i||}^{||T_i||} t^{2n}\, d\mu_i(t)=m_n(T_i^*T_i), \qquad n\in \N_0,\,\, i=1,2.
\end{equation}

Moreover, $\pm||T_i||\in\Supp(\mu_i)$.

Using methods from the theory of orthonormal polynomials applied to these two measures (cf.~Section~\ref{s:four}) we obtain from our moment calculations good lower bounds for $||T_i||$, $i=1,2$, namely
$$||I+A+B||\ge 2.86759$$
and
$$||A+A^{-1}+B+B^{-1}||\ge 3.60613.$$
In fact for each $n\le 37$ (resp. $n\le 24$), we find a lower estimate of $||T_1||$ (resp. $||T_2||$), and a suitable extrapolation of those two finite series of numbers suggests that the actual norms are much closer to 3 (resp 4), namely
$$||I+A+B||\approx2.95 \qquad \text{and} \qquad ||A+A^{-1}+B+B^{-1}||\approx3.87. $$
Furthermore, based on our moment calculations, we have also been able to estimate the Lebesgue densities of the measures $\mu_1$ and $\mu_2$ with fairly high precision. This shows that the measures $\mu_1$ and $\mu_2$ are very close to zero on the interval $[2.9,3]$  and $[3.7,4]$ respectively, but we cannot rule out, that the measures have very ``thin tails" stretching all the way up to $3$ and $4$, respectively, which would imply that $F$ is amenable
(cf.~Corollary~\ref{c:1.4}).
In comparison, one gets for the free group $\F_2$ on two generators $a,b$ that
$$||e+a+b||=2\sqrt 2\approx 2.82824$$ and
$$||a+a^{-1}+b+b^{-1}||=2\sqrt 3\approx 3.46410.$$

The measures $\mu_1$, $\mu_2$ based on $a,b$ instead of $A,B$,  will be denoted by $\mu_i^{\mathrm{free}}$ ($i=1,2$), and they can be computed explicitly (See Section~\ref{s:six}):
\begin{eqnarray*}
&&  \mu_1^{\mathrm{free}}= \frac 3{2\pi} \frac{\sqrt{8-x^2}}{9-x^2} 1_{[-2\sqrt 2,2\sqrt2]}(x) \, dx\\
&&    \mu_2^{\mathrm{free}}= \frac 2{\pi} \frac{\sqrt{12-x^2}}{16-x^2} 1_{[-2\sqrt 3,2\sqrt3]}(x) \, dx.
\end{eqnarray*}

Since 2007 a number of papers has been published about computational approaches to problems related to the Thompson group $F$, including the amenability problem (cf.~\cite{BurilloClearyWiest},\cite{GubaArzhantsevaLustingPreaux},\cite{ElderFusyRechnitzerCountingGeodesics},\cite{ElderRechnitzerWongCogrowth},\cite{ElderRechnitzerJanse}).
Our paper is the first that considers the moments of $T_1^*T_1$ for $T_1:=I+A+B$. In \cite{BurilloClearyWiest} Burillo, Cleary and Wiest use probabilistic methods to estimate the moments of $m_n(T_2^*T_2)=m_{2n}(T_2)$ for $T_2:=A+A^{-1}+B+B^{-1}$ for $n=10,20,30,\ldots,160$.
Moreover, Elder, Rechnitzer and Wong compute in \cite{ElderRechnitzerWongCogrowth} the first 22 cogrowth coefficients of $F$ with respect to the symmetric set of generators $\{A,A^{-1},B,B^{-1}\}$, from which the exact values of $m_{2n}(T_2)$ $n=1,\ldots, 11$ can easily be computed.
We will comment in more detail on the results of \cite{BurilloClearyWiest} and \cite{ElderRechnitzerWongCogrowth} at the end of Section~\ref{s:four}.

The Thompson group $F$ has been the key motivation for the research presented in this paper. However, we should stress, that the methods developed in this paper can be applied to do similar computations in any other finitely generated group, provided there is a reasonable fast algorithm to decide whether two words in the generators correspond to the same element of the group.

The rest of the paper is organized as follows:
Section~\ref{s:two} contains background material on the reduced $C^*$-algebra $C_r^*(\Gamma)$ and the group von Neumann algebra $L(\Gamma)$ associated to a discrete group $\Gamma$. In Section~\ref{s:three}, we consider for any given finite set $Y$ in a discrete group $\Gamma$, the element
$$h=\sum_{s\in Y} s$$
and explain how one can compute the moments
$$m_n=\tau((h^*h)^n),\quad n\ge 1$$
from computing much smaller number $||h_n||_2^2$, $\xi_n$, $\eta_n$, $\zeta_n$ ($n\ge 1$) associated to the pair $(\Gamma, Y)$. We next apply this to the special case of $\Gamma=F$, the Thompson group $F$, and $Y=\{I,A,B\}$ or $Y=\{A,A^{-1},B,B^{-1}\}$ in order to compute
numerically the moments $m_n$ for $1\le n\le 37$ in the first case and for $n\le n\le 24$ for the second case. In Sections~\ref{s:four} and \ref{s:fourB} we apply methods from the theory of orthogonal polynomials to estimate the norms $||I+A+B||$ and $||A+A^{-1}+B+B^{-1}||$, and to estimate the probability measures $\mu_1$, $\mu_2$, associated to $T_1=I+A+B$ and $T_2=A+A^{-1}+B+B^{-1}$ via formula~(\ref{e:4plus}) above.  The sections~\ref{s:five} and \ref{s:six} are the main theoretical sections of the paper.  In the general setting of a finite subset $Y$ of a discrete group $\Gamma$, we derive in Section~\ref{s:five} the formulas (used in Section~\ref{s:three}), that allow us to pass back and forth between the 5 sequences of numbers $||h_n||_2^2$, $\xi_n$, $\eta_n$, $\zeta_n$  and $m_n$ ($n\ge 1$).
In Section~\ref{s:six} we formulate and explain Kesten's and Lehner's results (Theorem~\ref{t:1.2} and Theorem~\ref{t:1.3}) in the setting of Leinert sets (cf.~Theorem~\ref{t:LowerboundLeinerSetsUpperBoundAmenability}).
Moreover, we make comparison between our numbers $||h_n||_2^2$, $\xi_n$, $\eta_n$, $\zeta_n$ ($n\ge 1$) and the cogrowth coefficients due to Cohen \cite{C} and Grigorchuk \cite{Gr}, and show that amenability of the group $\Gamma_0\subset \Gamma$ generated by $Y^{-1}Y$ can be decided from the asymptotics of each of the 4 sequences mentioned above (cf.~Corollary~\ref{c:7.9}).

\section{\textbf{Preliminaries}}\label{s:two}

Let $\Gamma$ be a countable discrete group. We consider the left regular representation $\lambda$ of $\Gamma$ on $\ell^2(\Gamma)$ i.e.
$$(\lambda(x)f)(y):=f(x^{-1}y),\qquad f\in \ell^2(\Gamma),\,\, x,y\in\Gamma.$$
We will also use the letter $\lambda$ for the extension of the regular representation to the group ring $\C\Gamma$ of $\Gamma$ i.e.
$$\lambda(\sum_{x\in Y} c_x x):= \sum_{x\in Y} c_x \lambda(x),$$
for any finite subset $Y\subset \Gamma$ and any set of complex numbers $(c_x)_{x\in \Gamma}$ indexed by $\Gamma$.
The reduced $C^*$-algebra of $\Gamma$ is the $C^*$-algebra generated by $\lambda(\Gamma)$, i.e.
$$C_r^*(\Gamma)=\overline{\lambda(\C\Gamma)}^{\mathrm{\, norm}},$$
and the group von Neumann algebra $L(\Gamma)$ is the von Neumann algebra generated by $\lambda(\Gamma)$, i.e.
$$L(\Gamma)=\overline{\lambda(\C\Gamma)}^{\mathrm{\, so}},$$
where the closure is taken in the strong operator topology on $B(H)$.
Note that $L(\Gamma)$  can also be expressed as $\lambda(\Gamma)'' $ (double commutant).
Moreover,
$$C_r^*(\Gamma)\subset L(\Gamma)\subset B(\ell^2(\Gamma))$$
with isometric inclusions.
We define the norm of an element $a\in\C \Gamma$ by
$$||a||=||\lambda(a)||_{B(\ell^2(\Gamma))}=||\lambda(a)||_{L(\Gamma)}=||\lambda(a)||_{C_r^*(\Gamma)}.$$
Let $\delta_x(y):=\delta_{x,y}$ for $x,y\in\Gamma$. Then $\{\delta_x \mid x\in \Gamma\}$ form an orthonormal basis for $\ell^2(\Gamma)$.
The functional
\begin{equation}\label{e:taudefinition}
  \tau(T)=\langle T\delta_e,\delta_e\rangle,\quad T\in L(\Gamma)
\end{equation}
is a normal faithful tracial state on $L(\Gamma)$. In particular,
$$\tau(ST)=\tau(TS),\quad S,T\in L(\Gamma)$$
$$\tau(T^*T)\ge 0 ,\quad T\in L(\Gamma)$$
$$\tau(T^*T)=0\Rightarrow T=0, \quad T\in L(\Gamma)$$
$$\tau(I)=1.$$
Since $\lambda:\C\Gamma\to C_r^*(\Gamma)$ is one-to-one, we may consider $\C \Gamma$ as a subalgebra of $C^*_r(\Gamma)\subset L(\Gamma)$.
Then the trace $\tau$ restricted to $\C\Gamma$ satisfies
$$\tau( \sum_{x\in Y} c_x x)=\left\{
                               \begin{array}{ll}
                                 c_e, & \text{ if } e\in Y \\
                                 0, & \text{otherwise}.
                               \end{array}
                             \right.
$$
In particular, $\tau(e)=1$ and $\tau(x)=0$ for $x\in\Gamma\backslash\{e\}$.
Moreover, for $a=\sum_{x\in \Gamma } c_x x \in \C\Gamma$,
\begin{equation}\label{e:tauastara}
  \tau(a^*a)=||a\delta_e||^2=||\sum_{x\in \Gamma} c_x\delta_x||^2=\sum_{x\in\Gamma} |c_x|^2.
\end{equation}
For all the above see e.g. Section 6.7 of  \cite{KadRin} or Section 2.5 of \cite{BrownOzawa}.
Let $S\in L(\Gamma)_{\mathrm{sa}}$ be a self-adjoint operator in $L(\Gamma)$.
Then $$\Lambda:C(\sigma(S))\to\C$$ given by
$\Lambda(f)=\tau(f(S))$ is a positive functional on $C(\sigma(S))$ such that $\Lambda(1)=1$.
Hence, there is a unique probability measure $\mu_S$ on $\sigma(S)$ for which
$$ \tau(f(S))=\int_{\sigma(S)} f d\mu_S,\qquad \forall f\in C(\sigma(S)).$$
Since $\tau$ is a faithful trace,
 $\Supp(\mu_S)=\sigma(S).$
We will also consider $\mu_S$ as a probability measure on all of $\R$ concentrated on $\sigma(S)$.
Since $||S||=r(S)$, the spectral radius of $S$, we have
\begin{equation}\label{e:normSABSalphaABSBeta}
  ||S||=\max\{|t|\mid t\in\supp(\mu_S)\}=\max\{|\alpha|,|\beta|\}
\end{equation}
 where $\alpha=\min(\supp(\mu_S))$ and $\beta=\max(\supp(\mu_S))$.
By the moments of $S$ we mean the numbers
$$m_n(S):=\tau(S^n),\qquad n\in\N_0=\N\cup\{0\}.$$
Note that $(m_n(S))_{n=0}^\infty$ are also the moments of the measure $\mu_S$, i.e.
\begin{equation}
  m_n(S)=\int_{-\infty}^\infty t^n d\mu_S(t), \qquad n\in\N_0,
\end{equation}
and since $\mu_S$ has a compact support, $\mu_S$ is uniquely determined by its moments.
For a not necessarily self-adjoint element $T\in L(\Gamma)$, it is convenient to consider the self-adjoint operator
$$\tilde T=\left(
             \begin{array}{cc}
               0 & T^* \\
               T & 0 \\
             \end{array}
           \right)\in M_2(L(\Gamma))=L(\Gamma)\otimes M_2(\C),
$$
and its moments
$$m_n(\tilde T):=\tilde \tau (\tilde T^n), \quad n\in \N_0,$$
and spectral distribution $\mu_{\tilde T}$ with respect to the normal faithful trace $\tilde \tau$  on $M_2(L(\Gamma))$ given by
$\tilde \tau=\tau\otimes \tau_2$, where $\tau_2=\frac 12 Tr$ on $M_2(\C)$, i.e.
\begin{equation}\label{e:tautilde}
 \tilde \tau \left(
                \begin{array}{cc}
                  T_{11} & T_{12} \\
                  T_{21} & T_{22} \\
                \end{array}
              \right)=\frac {\tau(T_{11})+\tau(T_{22})}2, \qquad [T_{ij}]\in M_2(L(\Gamma)).
\end{equation}

Using the trace properties of $\tau$, one has
\begin{equation}\label{e:mntautilde}
  m_n(\tilde T)=\left\{
                  \begin{array}{ll}
                    \tau((T^*T)^{n/2}), & n \text{ even} \\
                    0, & n\text{ odd.}
                  \end{array}
                \right.
\end{equation}

Since all the odd moments vanish, $\mu_{\tau_{\tilde T}}$ is symmetric. i.e. $\check{\mu}_{\tilde T}=\mu_{\tilde T}$, where
$$\check{\mu}_{\tilde T}(B)=\mu_{\tilde T}(-B), \quad B \text{ Borel set in } \R.$$
In particular,  $\min(\Supp(\mu_{\tilde T}))= -\max(\Supp(\mu_{\tilde T}))$ and hence by (\ref{e:normSABSalphaABSBeta})
\begin{equation}\label{e:normeqmaxofsupp}
  ||T||=||\tilde T||=\max (\Supp(\mu_{\tilde T})),
\end{equation}
and moreover
\begin{equation}\label{e:pmnormTinSupport}
  \pm ||T||\in\Supp(\mu_{\tilde T}).
\end{equation}
Note also that $\mu_{T^*T}$ is equal to the image measure $\phi(\mu_{\tilde T})$  of $\mu_{\tilde T}$ by the map $\phi:t\to t^2$
because $\mu_{T^*T}$ and $\phi(\mu_{\tilde T})$ are both compactly supported and by (\ref{e:mntautilde}) they have the same moments.

\section{\textbf{Computational methods and Results}}\label{s:three}

Let $\Gamma$ be a discrete group and let $Y\subset \Gamma$ be a finite set with $|Y|=q+1$ elements ($q\ge 2$).
Our main example will be $\Gamma=F$, the Thompson group, and $Y$ either $\{I,A,B\}$ or $\{A,A^{-1},B,B^{-1}\}$, but we will for some time stick to the general case in order to treat the two particular cases in a similar way. Our goal is to compute as many as possible of the moments
$$m_n:=\tau((h^*h)^n), \qquad n\in\N_0,$$
where $h=\sum_{s\in Y} s\in \C\Gamma$ and $\tau$ is the trace on $\C\Gamma$ coming from the group von Neumann algebra $L(\Gamma)$ as in Section~\ref{s:two}.
Recall that
$$\tau(x)=\left\{
            \begin{array}{ll}
              1, & x=e \\
              0, & x\ne e.
            \end{array}
          \right.
$$
Hence $m_0=1$, and for $n\ge 1$, the numbers $m_n\in\N$ are
\begin{equation}\label{e:mncomp}
  m_n=|\{(s_1,\ldots,s_{2n})\in Y^{2n}\mid s_1^{-1}s_2\cdot\cdots\cdot s_{2n-1}^{-1}s_n=e \}|,
\end{equation}
where $|X|$ denotes the number of elements in a set $X$.
For composing elements of the Thompson group $F$, we used the Belk and Brown forest algorithm from \cite{BelkBrownForestDiagrams}.
Consider the subsets $\tilde E_k\subset E_k\subset  Y^{k}$
\begin{equation}\label{e:Ekcomp}
  E_k:=\{(s_1,\ldots,s_{k})\in Y^{k}\mid s_1\ne s_2\ne \cdots \ne s_k \}
\end{equation}
\begin{equation}\label{e:tildeEkcomp}
  \tilde E_k:=\{(s_1,\ldots,s_{k})\in Y^{k}\mid s_1\ne s_2\ne \cdots \ne s_k\ne s_1 \}.
\end{equation}
We say that $\tilde E_k$ is ``cyclic" as all the cyclic rotations of each element in $\tilde E_k$ are contained in the set itself.
Since these subsets are smaller, it takes less time to compute the ``reduced" numbers
\begin{equation}\label{e:rhoncomp}
  \eta_n=|\{(s_1,\ldots,s_{2n})\in E_{2n}\mid s_1^{-1}s_2\cdot\cdots\cdot s_{2n-1}^{-1} s_{2n} =e\}|
\end{equation}
and even less time to compute the ``cyclic" numbers
\begin{equation}\label{e:zetancomp}
  \zeta_n=|\{(s_1,\ldots,s_{2n})\in \tilde E_{2n}\mid s_1^{-1}s_2\cdot\cdots\cdot s_{2n-1}^{-1} s_{2n} =e\}|.
\end{equation}
The relationship between these numbers and the moments are derived in Section~\ref{s:five}. In particular, we have
\begin{eqnarray*}\label{e:3.4}
  \zeta_n&=&\eta_n-(q-1)(\eta_{n-1}+\eta_{n-2}+\cdots+\eta_1),\qquad n\in\N\\
   m_n&=&\binom{2n}{n} q^n+\sum_{k=1}^{n}\binom{2n}{n-k} (\zeta_k+1-q)q^{n-k},\qquad n\in\N
\end{eqnarray*}
which shows that the first $n$ moments ($m_1$,\ldots, $m_n$) can be computed from either $(\eta_1,\ldots,\eta_n)$ or $(\zeta_1,\ldots,\zeta_n)$.
In the case of a symmetric set $Y$, i.e. $Y=Y^{-1}$, the reduced numbers $\eta_n$  are the even co-growth coefficients $\eta_n=\gamma_{2n}$ introduced by Cohen \cite{C} and Grigorchuk \cite{Gr}. In the notation of Elder-Rechnitzer-Wong \cite{ElderRechnitzerWongCogrowth}, $m_n=r_{2n}$ and $\eta_n=\alpha_{2n}$. (Still for a symmetric set $Y\subset \Gamma).$
In Cohen words, $\eta_n$ is the number of ``reduced" words in $Y$ of length $2n$ which represent the unit element in $\Gamma$.

It is much faster to compute the $\eta_n$'s or $\zeta_n$'s, than to compute the moments $m_n$ directly from (\ref{e:mncomp}).
However, we found a different approach to compute the moments, which speeded up the computations much further:
Let
\begin{equation}\label{e:3.5}
h_n=\left\{
        \begin{array}{ll}
          \sum\limits_{(s_1,\ldots,s_n)\in E_n} s_1^{-1}s_2\ldots s_{n-1}^{-1}s_n, &  (n \text{ even})\\
            \sum\limits_{(s_1,\ldots,s_n)\in E_n} s_1s_2^{-1}\ldots s_{n-1}^{-1}s_n, &(n \text{ odd})
        \end{array}
      \right.
\end{equation}
and recall that $||a||_2=\tau(a^*a)^{1/2}$ is the 2-norm on $L(\Gamma)$ associated with the trace $\tau$. Then
\begin{equation}\label{e:halfnumberscomp}
  \tau(h_n^*h_n)=||h_n||^2_2,
\end{equation}
and the reduced number $\eta_n$ can be computed from the square of the 2-norm of  $h_n$ by the following two equations
\begin{eqnarray}
  \eta_n&=&\xi_n-(q-1)(\xi_{n-1}+\xi_{n-2}+\cdots+\xi_1),\\
\nonumber \xi_n&=&||h_n||_2^2-(q+1)q^{n-1}
\end{eqnarray}
(cf.~Section~\ref{s:five}), and hence the moment series $(m_n)_{n=1}^\infty$ can also be computed from the numbers $(||h_n||_2^2)_{n=1}^\infty$.
In practice we computed $||h_n||_2^2$ as follows. Note first that
\begin{equation}
h_n=\sum_{x\in Y_n} c_x^{(n)}x
\end{equation}
where $Y_n\subset \Gamma$ is the set of all distinct terms in the sum~(\ref{e:3.5}), and $c_x^{(n)}\in\N$ is the multiplicity of the occurrence of $x\in Y_n$ in the sum~(\ref{e:3.5}). Note that $Y_1=Y$, $Y_2\subset Y^{-1}Y$, $Y_3\subset YY^{-1}Y$, $Y_4\subset Y^{-1}YY^{-1}Y$, etc.
Since
$$\tau(x^{-1}y)=\left\{
                  \begin{array}{ll}
                    1, & x=y \\
                    0, & x\ne y
                  \end{array}
                \right.
$$
we have
\begin{equation}
  ||h_n||_2^2=\sum_{x\in Y_n} (c_x^{(n)})^2.
\end{equation}

The advantage of computing the squared 2-norms  $||h_n||_2^2$  instead of the reduced numbers $\eta_n$ or the cyclic numbers $\zeta_n$ is that we only have to consider $(1+q)q^{n-1}$ words of length $n$ instead of $(1+q)q^{2n-1}$ words of length $2n$.
This made it possible for us to almost double the number of moments we could compute in the two cases $Y=\{I,A,B\}$ and $Y=\{A,B,A^{-1},B^{-1}\}$ for the Thompson group $F$.
The only drawback was that we need first to store all the terms of the sum~(\ref{e:3.5}), and next to sort the list in order to compute the multiplicities $c_x^{(n)}\in \N$, $x\in Y_n$.


We wrote two programs, both using parallel computing, to calculate the squared 2-norms $||h_n||_2^2$, which can be downloaded at:
\\\\
\url{https://github.com/shaagerup/ThompsonGroupF/}\\
\url{https://github.com/mariars/ThompsonGroupF/}\\
\url{http://www.math.ku.dk/~haagerup/ThompsonGroupF/}\\
\url{http://www.math.ku.dk/~mrs/ThompsonGroupF/}\\

\begin{table}[b]
\caption{The series of numbers for $h=I+A+B$ (Case 1).\label{t:Case1SeriesOfNumbers}}
{\footnotesize
\begin{tabular}{lllll}
\hline
$n$ & $||h_n||_2^2$ & $\eta_n$ & $\zeta_n$ & $m_n(h^*h)$\\
\hline
1	 & 	3	  & 	0	 & 	0	 & 	3\\
2	 & 	6	  & 	0	 & 	0	 & 	15\\
3	 & 	12	  & 	0	 & 	0	 & 	87\\
4	 & 	24	  & 	0	 & 	0	 & 	543\\
5	 & 	48	  & 	0	 & 	0	 & 	3543\\
6	 & 	96	  & 	0	 & 	0	 & 	23823\\
7	 & 	192	  & 	0	 & 	0	 & 	163719\\
8	 & 	400	  & 	16	 & 	16	 & 	1144015\\
9	 & 	800	  & 	16	 & 	0	 & 	8100087\\
10	 & 	1656	  & 	72	 & 	40	 & 	57971735\\
11	 & 	3344	  & 	104	 & 	0	 & 	418640071\\
12	 & 	7032	  & 	448	 & 	240	 & 	3046373007\\
13	 & 	14272	  & 	656	 & 	0	 & 	22314896087\\
14	 & 	30544	  & 	2656	 & 	1344	 & 	164407579407\\
15	 & 	63120	  & 	4688	 & 	720	 & 	1217526417687\\
16	 & 	137264	  & 	15712	 & 	7056	 & 	9057960864015\\
17	 & 	292160	  & 	33344	 & 	8976	 & 	67667981453831\\
18	 & 	651960	  & 	100984	 & 	43272	 & 	507425879338551\\
19	 & 	1435808	  & 	232872	 & 	74176	 & 	3818200408513415\\
20	 & 	3310592	  & 	671848	 & 	280280	 & 	28821799875573303\\
21	 & 	7593024	  & 	1643688	 & 	580272	 & 	218200189786794855\\
22	 & 	18161528	 & 	4619168	 & 	1912064	 & 	1656415132760705871\\
23	 & 	43488112	 & 	11784224	 & 	4457952	 & 	12606151256856370471\\
24	 & 	107764880	 & 	32572880	 & 	13462384	 & 	96166410605134544815\\
25	 & 	268721056	 & 	85764176	 & 	34080800	 & 	735237884585469467543\\
26	 & 	686850128	 & 	235172192	 & 	97724640	 & 	5632983879577272289359\\
27	 & 	1769246208	 & 	630718144	 & 	258098400	 & 	43241777428163458121799\\
28	 & 	4640551024	 & 	1732776752	 & 	729438864	 & 	332564656181337723832623\\
29	 & 	12254456800	 & 	4706131504	 & 	1970016864	 & 	2562203165206920141303479\\
30	 & 	32773003720	 & 	12970221624	 & 	5527975480	 & 	19773205160010752987777543\\
31	 & 	88160278544	 & 	35584492728	 & 	15172024960	 & 	152837887007013006956440295\\
32	 & 	239251904104	 & 	98515839744	 & 	42518879248	 & 	1183157961642417140248556303\\
33	 & 	652453973392	 & 	272466004928	 & 	117953204688	 & 	9172380845538923831902240519\\
34	 & 	1790526123576	 & 	758084181720	 & 	331105376552	 & 	71206765648586031626111809367\\
35	 & 	4933923852880	 & 	2110955787448	 & 	925892800560	 & 	553521536480845628126004101879\\
36	 & 	13660080583776	 & 	5903188665464	 & 	2607169891128	 & 	4308220957036953495382444267287\\
37	 & 	37952694315360	 & 	16535721813272	 & 	7336514373472	 & 	33572939291063083015187615095255\\
\hline
\end{tabular}}
\end{table}

\begin{table}[h]
\caption{ The series of numbers for $h=A+B+A^{-1}+B^{-1}$  (Case 2).\label{t:Case2SeriesOfNumbers}}
{\footnotesize
\begin{tabular}{lllll}
\hline
$n$ & $||h_n||^2_2$ & $\eta_n$ & $\zeta_n$ & $m_n(h^*h)$\\
\hline
1	 & 	4	 & 	0	 & 	0	 & 	4\\
2	 & 	12	 & 	0	 & 	0	 & 	28\\
3	 & 	36	 & 	0	 & 	0	 & 	232\\
4	 & 	108	 & 	0	 & 	0	 & 	2092\\
5	 & 	344	 & 	20	 & 	20	 & 	19884\\
6	 & 	1076	 & 	64	 & 	24	 & 	196096\\
7	 & 	3500	 & 	336	 & 	168	 & 	1988452\\
8	 & 	11324	 & 	1160	 & 	320	 & 	20612364\\
9	 & 	38708	 & 	5896	 & 	2736	 & 	217561120\\
10	 & 	134880	 & 	24652	 & 	9700	 & 	2331456068\\
11	 & 	497616	 & 	117628	 & 	53372	 & 	25311956784\\
12	 & 	1906356	 & 	531136	 & 	231624	 & 	277937245744\\
13	 & 	7747484	 & 	2559552	 & 	1197768	 & 	3082543843552\\
14	 & 	32825220	 & 	12142320	 & 	5661432	 & 	34493827011868\\
15	 & 	145750148	 & 	59416808	 & 	28651280	 & 	389093033592912\\
16	 & 	668749196	 & 	290915560	 & 	141316416	 & 	4420986174041164\\
17	 & 	3164933480	 & 	1449601452	 & 	718171188	 & 	50566377945667804\\
18	 & 	15314270964	 & 	7269071976	 & 	3638438808	 & 	581894842848487960\\
19	 & 	75551504916	 & 	36877764000	 & 	18708986880	 & 	6733830314028209908\\
20	 & 	378261586048	 & 	188484835300	 & 	96560530180	 & 	78331435477025276852\\
21	 & 	1918303887820	 & 	972003964976	 & 	503109989256	 & 	915607264080561034564\\
22	 & 	9831967554120	 & 	5049059855636	 & 	2636157949964	 & 	10750847942401254987096\\
23	 & 	50870130025336	 & 	26423287218612	 & 	13912265601668	 & 	126768974481834814357308\\
24	 &      265393048436340	 &	139205945578944  & 	73848349524776	 &  1500753741925909645997904\\
\hline
\end{tabular}}
\end{table}

The first program was written in Haskell, where the code loops through all the terms in the sum~(\ref{e:3.5}) and saves them to the hard disk.  We then use the GNU-sort program to find the multiplicities. This program was run in a supercomputer with 32 cores and 128 GB memory at the University of Copenhagen. The second program was written in $C\#$. For the case $Y=\{A,B,A^{-1},B^{-1}\}$,  we made one further step to reduce both the computation time and the size of the storing data.
Since $|E_n|=4\cdot 3^{n-1}$ in this case, the computation time will increase at least by a factor of $3$ when going from $n$ to $n+1$. In practice the computing time increased by a factor $\approx 3.2$. It is known (cf.~\cite{ElderFusyRechnitzerCountingGeodesics}, \cite{GubaCayLeyGraphF}) that the size of the spheres $S_n$  of radius $n$ in the word metric of $F$, grows roughly with a factor $\phi^2=2.618\ldots$ (the square of the golden ratio $\phi=\tfrac{1+\sqrt 5}2$), when passing from $S_n$ to $S_{n+1}$.
Hence we expect the size of $Y_n\subset Y^n=(S_1)^n=S_n\sqcup S_{n-2}\sqcup S_{n-4}\sqcup\ldots$ to grow with a factor close to $2.618$ when passing from $n$ to $n+1$.
By using the recursion formula
$$h_{n+1}=h\cdot h_n-q h_{n-1}$$
 derived in Section~\ref{s:five}(see formulas~(\ref{e:hnplus1knplus1Even})-(\ref{e:hnplus1knplus1Odd}) for the case $h_n=k_n$)
we could compute the list of multiplicities $(c_x^{n+1})_{x\in Y_{n+1}}$ from the lists $(c_x^{n})_{x\in Y_n}$ and $(c_x^{n-1})_{x\in Y_{n-1}}$, and by this method the computation time only grew $\approx 2.8$ when passing from $n$ to $n+1$. This worked well for computing the numbers $||h_n||_2^2$ for $n\le 23$. To obtain $||h_{24}||_2^2$  an extra trick was implemented
to speed up the computations and to reduce the size of the storing data.
The elements of $Y_n\subset \Gamma$ are homeomorphisms written as forest diagrams $(u,v)$ which consist of two ``reduced" trees -- the domain tree $u$, and the range tree $v$.
The extra trick consists on the observation that if $(u,v)\in Y_n\backslash\{e\}$ then so is its inverse $(v,u)\in Y_n\backslash\{e\}$.
Saving only one of them reduces the storing size by about one half.
This program was run in a standard desktop computer with an AMD Phenom II X4 965 Quad-core processor (3400 Mhz) and 2 TB of SSD-hard disk.
It took about 5 days to compute and store the forest diagrams (in serialized form) for $||h_{24}||_2^2$ for $Y=\{A,B,A^{-1},B^{-1}\}$, and another five days to do the sorting.
The series of numbers $||h_n||_2^2$, $\eta_n$, $\zeta_n$, $m_n(h^*h)$ for case 1 (resp. case 2) are shown in Table~\ref{t:Case1SeriesOfNumbers} (resp.~Table ~\ref{t:Case2SeriesOfNumbers}).

In Appendix \ref{a:1} (Theorem~\ref{t:B1}($iii$) ) we show that if $\Gamma$ is torsion free (e.g. if $\Gamma=F$), then the numbers
$$
 \zeta'_n:=\sum\limits_{\substack{d|n}} \mu(\frac nd)\zeta_d, \qquad n\in\N
$$
are non negative integers divisible by 2n.
Here $\mu:\N\to\{-1,0,1\}$  is the M\"obius function (cf.~Section 2.2. in \cite{Apostol}).
We used this as a test to detect possible programming errors in the computation of the sequence of numbers $||h_n||_2^2$.
This test rules out false values of $||h_n||_2^2$ with probability $1-\frac 1{2n}$.

\section{\textbf{Estimating Norms}}\label{s:four}

Let $T\in L(\Gamma)$ be an operator in the von Neumann algebra of a countable group $\Gamma$.
We will in this section discuss how we can get good lower bounds for $||T||$ from knowing only finitely many of the moments
$m_n(T^*T)=\tau((T^*T)^n)$ as well as getting some prediction of what the actual values of $||T||$ might be. The following proposition is well known. For the convenience of the reader, we include a proof.

\begin{pro}\label{p:4.1}

  Let $T\in L(\Gamma)$, $T\ne0$. Then
  \begin{equation}\label{e:4A}
    m_n(T^*T)^{1/n} \le \frac{m_n(T^*T)}{m_{n-1}(T^*T)}\le ||T||^2\qquad n\in\N.
  \end{equation}
  Moreover, both $(m_n^{1/n})_{n=1}^\infty$ and $(\frac{m_n}{m_{n-1}})_{n=1}^\infty$ are increasing sequences and
  \begin{equation}\label{e:4B}
    \lim\limits_{n\to\infty} m_n(T^*T)^{1/n}=\lim\limits_{n\to\infty} \frac{m_n(T^*T)}{m_{n-1}(T^*T)}= ||T||^2.
  \end{equation}
\end{pro}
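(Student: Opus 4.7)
The plan is to reduce everything to a one-variable moment problem and then run standard $L^p$-norm inequalities. Set $S = T^*T \in L(\Gamma)_{\mathrm{sa}}$, so $S \ge 0$, $\|S\| = \|T\|^2$, and $m_n = m_n(T^*T) = \tau(S^n)$. By the discussion in Section~\ref{s:two}, the spectral distribution $\mu = \mu_S$ is a probability measure on $[0,\|S\|]$ with
\[
m_n = \int_0^{\|S\|} t^n \, d\mu(t), \qquad n \in \N_0,
\]
and crucially $\|S\| \in \Supp(\mu)$ since $\tau$ is faithful. The three claims in the proposition are now moment inequalities for $\mu$.

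First I would establish $m_n/m_{n-1} \le \|T\|^2$. Since $0 \le t \le \|S\|$ on $\Supp(\mu)$, the trivial bound $\int t^n\,d\mu \le \|S\| \int t^{n-1}\,d\mu$ gives $m_n \le \|S\| m_{n-1}$, which is exactly the right inequality (note $m_{n-1} > 0$ since $\mu \ne 0$ and $m_0=1$, together with $\|S\| \in \Supp(\mu)$ precluding $m_n = 0$). Next, to show $(m_n/m_{n-1})$ is increasing, I apply Cauchy--Schwarz to the factorization $t^n = t^{(n-1)/2} \cdot t^{(n+1)/2}$ on $\Supp(\mu) \subset [0,\infty)$:
\[
m_n^2 = \left(\int t^{(n-1)/2} \cdot t^{(n+1)/2} \, d\mu\right)^2 \le m_{n-1}\, m_{n+1},
\]
so $m_n/m_{n-1} \le m_{n+1}/m_n$.

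From monotonicity of the ratios, the inequality $m_n^{1/n} \le m_n/m_{n-1}$ is immediate: telescoping $m_n = \prod_{k=1}^n (m_k/m_{k-1})$ (using $m_0 = 1$), we have $n$ positive increasing factors whose geometric mean $m_n^{1/n}$ is bounded above by the largest factor $m_n/m_{n-1}$. That $(m_n^{1/n})$ itself is increasing is Lyapunov's inequality: for a probability measure $\mu$ and positive integrable $t$, the map $n \mapsto \left(\int t^n\,d\mu\right)^{1/n}$ is non-decreasing, as follows from Jensen applied to the convex function $x \mapsto x^{(n+1)/n}$.

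Finally, for the limits, I note that $m_n^{1/n}$ is precisely the $L^n(\mu)$-norm of the identity function, so by standard measure theory
\[
\lim_{n\to\infty} m_n^{1/n} = \|t\|_{L^\infty(\mu)} = \max \Supp(\mu) = \|S\| = \|T\|^2,
\]
where the second equality uses the fact (from Section~\ref{s:two}) that $\|S\| \in \Supp(\mu)$, which is what prevents the limit from being strictly less than $\|T\|^2$. The corresponding limit for $m_n/m_{n-1}$ then follows from the sandwich $m_n^{1/n} \le m_n/m_{n-1} \le \|T\|^2$. The only nontrivial point is remembering that $\|S\| \in \Supp(\mu)$ is needed to upgrade ``$\lim \le \|T\|^2$'' to equality; everything else is routine application of Cauchy--Schwarz/Jensen.
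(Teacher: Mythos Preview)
Your proof is correct and follows essentially the same route as the paper's: both reduce to moment inequalities for the spectral measure $\mu_{T^*T}$, use the pointwise bound $t^n \le \|T\|^2 t^{n-1}$ for the upper estimate, Cauchy--Schwarz/H\"older for log-convexity of the moments, and the telescoping product for the first inequality in~(\ref{e:4A}). The only cosmetic differences are that you invoke Jensen (equivalently H\"older against the constant $1$) for the monotonicity of $m_n^{1/n}$ and cite the standard $L^n \to L^\infty$ convergence for the limit, whereas the paper writes out the latter explicitly via $m_n \ge a^n\,\mu([a,\|T\|^2])$; these are the same arguments in slightly different packaging.
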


\begin{proof}
Let $\nu=\mu_{T^*T}$ be the spectral distribution measure of $T^*T$ with respect to the trace $\tau$ on $L(\Gamma)$.
Since $T^*T\ge0$ is a positive self-adjoint operator, we have from Section~\ref{s:two} that
$$\Supp(\nu)=\sigma(T^*T)\subset [0,\infty)$$
  and
$$\max(\Supp(\nu))=\max\{|x|\mid x\in\supp(\nu)\}=||T^*T||=||T||^2.$$
Let $m_n:=m_n(T^*T)$, $n\in\N_0$. Then
$$m_n=\int_{0}^{||T||^2} t^n \,d_\nu(t)>0$$
because $\Supp(\nu)\ne\{0\}$ as $T\ne0$.
Using $t^n\le ||T||^2t ^{n-1}$ for $t\in[0,||T||^2]$, we get $m_n/m_{n-1}\le ||T||^2$ for $n\ge 1$. Moreover by H\" older's inequality, we have
\begin{eqnarray*}
  m_n&=& \int_{0}^{||T||^2} (t^{n-1})^{1/2}(t^{n+1})^{1/2}\,d\nu(t)\\
     &\le& \left(\int_{0}^{||T||^2} t^{n-1}\,d\nu(t)\right)^{1/2}\left(\int_{0}^{||T||^2} t^{n+1}\,d\nu(t)\right)^{1/2}\\
     &\le&m_{n-1}^{1/2}\cdot m_{n+1}^{1/2}.
\end{eqnarray*}
Hence, $m_n/m_{n-1}\le m_{n+1}/m_{n}$, i.e. $(m_n/m_{n-1})_{n=1}^\infty$ is an increasing sequence. Since $m_0=1$,
$$m_n=\frac{m_1}{m_0}\frac{m_2}{m_1}\cdots\frac{m_n}{m_{n-1}}\le (\frac{m_n}{m_{n-1}})^n$$
which proves (\ref{e:4A}).
Using H\" older's inequality we have
$$m_n=\int_0^{||T||^2}t^n\cdot1\,d\nu(t)\le \left(\int_0^{||T||^2}t^{n+1}\,d\nu(t)\right)^{n/(n+1)}\left(\int_0^{||t||^2}1\,d\nu(t)\right)^{1/(n+1)}=m_{n+1}^{n/(n+1)}$$
which proves that $(m_n^{1/n})_{n=1}^\infty$ is an increasing sequence.
Let $0<a<||T||^2$. Since $||T||^2\in\Supp(\nu)$,
$$\nu([a,||T||^2])>0.$$
Since $$m_n\ge \int_a^{||T||^2} t^n\,d\nu(t)\ge a^n \nu([0,||T||^2]).$$
It follows that $\liminf_{n\to\infty}(m_n^{1/n})\ge a$ for all $a\in(0,||T||^2)$. Hence $\liminf_{n\to\infty}(m_n^{1/n})\ge ||T||^2$ which together with (\ref{e:4A}) proves (\ref{e:4B}).
\end{proof}

The lower bounds $m_n(T^*T)^{\frac1{2n}}$ and $\frac{m_n(T^*T)^{1/2}}{m_{n-1}(T^*T)^{1/2}}$ for $||T||$ in Proposition~\ref{p:4.1} give, however, rather poor lower estimates of $||T||$ in the two main cases we consider, namely
$$T_1=I+A+B,\qquad T_2=A+A^{-1}+B+B^{-1}$$
in the group ring $\C F$ of the Thompson group $F$.
To get better estimates, we apply methods from the theory of orthogonal polynomials (c.f \cite{S} or \cite{HTF2}) to the symmetric measure $\mu_{\tilde T}$ described in Section~\ref{s:two}.

Let $\mu$ be a compactly supported Radon measure on $\R$ for which $\supp(\mu)$ is not a finite set. Then the polynomials $1$, $t$, $t^2$, $\ldots$ are linear independent in $L^2(\mu)$. Hence by Gram-Schmidt orthonormalization we get a sequence of polynomials $(p_n)_{n=0}^\infty$  of degree$(p_n)=n$, such that
$$\int_{\R} p_m(t)p_n(t)\,d\mu(t)=\delta_{m,n}, \qquad m,n\in\N_0.$$

Moreover, the polynomials $(p_n)_{n=0}^\infty$ are uniquely determined if we add the condition that $k_n>0$, where $k_n$ is the coefficient of $t^n$ in the polynomial $p_n(t)$. Let
$$c_n:=\int_{\R} t^n \,d\mu(t),\qquad n\in\N_0$$
be the n'th moment of $\mu$, and put
\begin{equation}\label{e:4.1}
  D_n:=\det ([c_{i+j}]_{i,j=0}^n).
\end{equation}
Then by Formula (2.2.7) and (2.2.15) in \cite{S} we have
\begin{eqnarray}
\label{e:4.2}  &&D_n>0 \qquad\qquad\qquad\quad  n\ge0\\
\label{e:4.3}  &&k_n=\left(\frac{D_{n-1}}{D_n}\right)^{1/2}\qquad n\ge 1.
\end{eqnarray}
Note that $k_0=c_0^{-1/2}=D_0^{-1/2}$. Hence (\ref{e:4.3}) holds for all $n\ge0$ if we define
\begin{equation}\label{e:4.4}
  D_{-1}:=1.
\end{equation}
Note also, that since $\mu$ is compactly supported, the set of polynomials is dense in $L^2(\mu)$. Hence $(p_n)_{n=0}^\infty$ is an orthonormal basis for the Hilbert space $L^2(\mu)$.

If we furthermore assume that $\mu$ is a symmetric measure on $\R$ (i.e. $\mu(-B)=\mu(B)$ for every Borel set $B$), then it is easily seen that
\begin{equation}\label{e:4.5}
  p_n(-t)=(-1)^np_n(t),\qquad n\in\N_0.
\end{equation}
An elementary application of the recursion formula for orthonormal polynomials (c.f Section 3.2 in \cite{S}) now gives:

\begin{pro}\label{p:4.2}

  Let $\mu$ be a symmetric compactly supported Radon measure on $\R$ for which the support is not finite. Then the multiplication operator
  $$m_t:f(t)\to t f(t),\qquad f\in L^2(\mu)$$
  in $B(L^2(\mu))$ has the form
  \begin{equation}\label{e:4.6}
    M=[m_{ij}]_{i,j=0}^\infty=\left(
     \begin{array}{ccccc}
       0 & \alpha_1 & 0 & 0 &\cdots \\
       \alpha_1 & 0 & \alpha_2 & 0 &\cdots \\
       0 &  \alpha_2 &  0 & \alpha_3 &\cdots\\
       0 & 0 &  \alpha_3 &  0 &\cdots\\
       \vdots & \vdots & \vdots &\vdots & \ddots \\
     \end{array}
   \right)
  \end{equation}
  with respect to the orthonormal basis $(p_n)_{n=0}^\infty$ for $L^2(\mu)$, where
  \begin{equation}\label{e:4.7}
    \alpha_n=\frac{k_{n-1}}{k_n}=\left(\frac{D_{n-2}D_n}{D_{n-1}^2}\right),\qquad n\ge 1.
  \end{equation}
\end{pro}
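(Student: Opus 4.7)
The plan is to establish the tridiagonal structure in three separate steps: first, show that $m_{ij}=0$ whenever $|i-j|\geq 2$ (the three-term recurrence); second, show the main diagonal vanishes using the symmetry of $\mu$; and third, identify the single off-diagonal entry $\alpha_n$ by comparing leading coefficients, then translate this via (\ref{e:4.3}) into the determinantal formula.

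For the first step, I would compute the matrix entries directly as $m_{ij}=\langle m_t p_j,p_i\rangle_{L^2(\mu)}=\int_\R t p_j(t)p_i(t)\,d\mu(t)$. If $i>j+1$, then $tp_j(t)$ is a polynomial of degree $j+1<i$, hence lies in the span of $p_0,\ldots,p_{i-1}$, and so is orthogonal to $p_i$; this forces $m_{ij}=0$. Self-adjointness of $m_t$ (since the variable is real) gives the symmetric claim for $i<j-1$, and also shows $m_{ij}=m_{ji}\in\R$.

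For the second step, the symmetry assumption $\mu(-B)=\mu(B)$ combined with the parity relation (\ref{e:4.5}), i.e. $p_n(-t)=(-1)^n p_n(t)$, implies that the integrand in $m_{nn}=\int_\R t\,p_n(t)^2\,d\mu(t)$ is an odd function against a symmetric measure, so the integral vanishes. This takes care of the zero diagonal.

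For the third step, I would identify $m_{n,n-1}$ by comparing the leading coefficient of $t p_{n-1}(t)$ with its expansion $t p_{n-1}(t)=\sum_{k=0}^{n}\langle t p_{n-1},p_k\rangle p_k(t)$. The leading term on the left is $k_{n-1}t^n$, while on the right it is $\langle t p_{n-1},p_n\rangle \cdot k_n t^n$. This forces $m_{n,n-1}=\langle t p_{n-1},p_n\rangle=k_{n-1}/k_n$, and by symmetry of the matrix this also gives $m_{n-1,n}$. Substituting (\ref{e:4.3}) yields $\alpha_n=(D_{n-2}/D_{n-1})^{1/2}(D_n/D_{n-1})^{1/2}=(D_{n-2}D_n/D_{n-1}^2)^{1/2}$, with the convention (\ref{e:4.4}) covering the case $n=1$. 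The only mildly delicate point is the orthogonality argument in the first step, which requires that $\{p_0,\ldots,p_{n}\}$ spans the polynomials of degree $\leq n$ — but this is immediate from the Gram--Schmidt construction together with $\deg p_n=n$ and $k_n>0$.
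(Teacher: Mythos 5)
Your proposal is correct. It differs from the paper's proof mainly in how the tridiagonal structure is obtained: the paper quotes the classical three-term recurrence $p_{n+1}(t)=(A_nt+B_n)p_n(t)-C_np_{n-1}(t)$ from Szeg\H{o} (Theorem 3.2.1), observes that $B_n=0$ by the parity relation (\ref{e:4.5}), and then simply solves for $tp_n(t)=\frac{k_n}{k_{n+1}}p_{n+1}(t)+\frac{k_{n-1}}{k_n}p_{n-1}(t)$ to read off the matrix; you instead compute the matrix entries $m_{ij}=\int_\R t\,p_i(t)p_j(t)\,d\mu(t)$ directly, killing the entries with $|i-j|\ge 2$ by a degree/orthogonality argument, the diagonal by parity against the symmetric measure, and identifying $m_{n,n-1}=k_{n-1}/k_n$ by comparing leading coefficients in the expansion of $tp_{n-1}$. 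Your route is in effect a self-contained reproof of the three-term recurrence in the symmetric case, so it is slightly longer in substance but removes the dependence on the cited recursion (and, as a small bonus, your final step makes explicit the exponent $1/2$ in $\alpha_n=\bigl(D_{n-2}D_n/D_{n-1}^2\bigr)^{1/2}$, which is what formula (\ref{e:4.7}) is meant to say); the paper's version buys brevity by leaning on the standard reference. Both arguments use the same essential inputs: $\deg p_n=n$ with $k_n>0$, the fact that $p_0,\dots,p_n$ span the polynomials of degree at most $n$, the parity relation (\ref{e:4.5}), and the determinant identities (\ref{e:4.3})--(\ref{e:4.4}).
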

\begin{proof}
  From Theorem~3.2.1 in \cite{S} or pp.~58-59 in \cite{HTF2} we have
  $$p_{n+1}(t)=(A_n t+B_n)p_n(t)-C_n p_{n-1} (t),\qquad n\ge 1$$
  and
  $$p_1(t)=(A_0 t+ B_0) p_0(t),$$
  where $A_n=\frac{k_{n+1}}{k_n}$ $(n\ge 0)$ and $C_n=\frac{A_n}{A_{n-1}}$ $(n\ge 1)$.
  Moreover, $B_n=0$ for all $n\ge0$ by (\ref{e:4.5}). Hence
  \begin{eqnarray*}
    tp_n(t)&=&\frac{1}{A_n} p_{n+1}(t)+\frac{C_n}{A_n}p_{n-1}(t)\\
    &=&\frac{k_n}{k_{n+1}}p_{n+1}(t)+\frac{k_{n-1}}{k_n}p_{n-1}(t),\qquad n\ge1\\
  \end{eqnarray*}
  and
  $$tp_0(t)=\frac{k_0}{k_1} p_1(t).$$
 This shows that the matrix for $m_t$ with respect to the basis $(p_n)_{n=0}^\infty$ has the form (\ref{e:4.6}) where $\alpha_n=\frac{k_{n-1}}{k_n}$ $(n\ge 1)$. The second formula for $\alpha_n$ in (\ref{e:4.7}) follows from (\ref{e:4.3}) and (\ref{e:4.4}).
\end{proof}

\begin{pro}\label{p:4.3}

  Let $\mu$ and $M$ be as in Proposition~\ref{p:4.2}.
  Let $$M_n=[m_{ij}]_{i,j=0}^n=\left(
     \begin{array}{ccccc}
              0 & \alpha_1 &         &  & \\
       \alpha_1 & 0        & \alpha_2&  &0 \\
                & \alpha_2 & \ddots  & \ddots &\\
                &          &  \ddots & \ddots  &\alpha_n\\
       0        &          &         &\alpha_n & 0 \\
     \end{array}
   \right)$$
   and let $||M||$ and $||M_n||$ denote the norm of $M$ (resp. $M_n$) considered as operators on $\ell^2(\N_0)$ (resp. $\ell^2(\{0,\ldots,n\})$). Then
   \begin{eqnarray}
     \label{e:4.8} &&||M||=\max(\Supp(\mu))\\
     \label{e:4.9}&&||M_n||=\lambda_{\max}(M_n),
   \end{eqnarray}
   where $\lambda_{\max}$  denotes the largest eigenvalue of the symmetric matrix $M_n$. Moreover,
   \begin{eqnarray}
     \label{e:4.10}&& (||M_n||)_{n=1}^\infty \text{ is an increasing sequence.}\\
     \label{e:4.11}&& \lim\limits_{n\to\infty} ||M_n||=||M||.
   \end{eqnarray}
\end{pro}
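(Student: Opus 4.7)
The plan is to leverage the unitary equivalence afforded by Proposition~\ref{p:4.2}: via the orthonormal basis $(p_n)_{n=0}^\infty$ of $L^2(\mu)$, the operator $M$ is unitarily equivalent to the multiplication operator $m_t\in B(L^2(\mu))$, so $\|M\|=\|m_t\|$. For (\ref{e:4.8}) I would then invoke that $m_t$ is self-adjoint with spectrum $\sigma(m_t)=\Supp(\mu)$, whence $\|m_t\|=\sup\{|t|:t\in\Supp(\mu)\}$; the symmetry assumption $\mu=\check\mu$ forces $\Supp(\mu)$ to be symmetric about $0$, so this supremum equals $\max(\Supp(\mu))$.

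For (\ref{e:4.9}), $M_n$ is real symmetric, so $\|M_n\|$ equals the largest absolute value among its eigenvalues. To upgrade this to $\lambda_{\max}(M_n)$ itself, I would conjugate by the diagonal sign matrix $D=\mathrm{diag}(1,-1,1,\ldots,(-1)^n)$: since $M_n$ is tridiagonal with zero diagonal, $DM_nD^{-1}=-M_n$, so $\sigma(M_n)=-\sigma(M_n)$, forcing $\lambda_{\max}(M_n)=-\lambda_{\min}(M_n)=\|M_n\|$. Next, for (\ref{e:4.10}), $M_{n-1}$ sits inside $M_n$ as the upper-left principal submatrix: a unit eigenvector $v\in\R^n$ of $M_{n-1}$ for $\lambda_{\max}(M_{n-1})$, extended by a zero coordinate to $\tilde v\in\R^{n+1}$, satisfies $\langle M_n\tilde v,\tilde v\rangle=\langle M_{n-1}v,v\rangle=\lambda_{\max}(M_{n-1})$, so by the Rayleigh--Ritz characterization $\lambda_{\max}(M_n)\ge\lambda_{\max}(M_{n-1})$ (this is just Cauchy interlacing). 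Combined with (\ref{e:4.9}) this yields the claimed monotonicity.

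For (\ref{e:4.11}), I would let $P_n\in B(\ell^2(\N_0))$ be the orthogonal projection onto the span of the first $n+1$ basis vectors, so that $M_n$ coincides (after identification) with the compression $P_nMP_n$ restricted to its range. The inequality $\|M_n\|\le\|M\|$ is immediate from $\|P_n\|=1$. For the matching lower bound, given $\varepsilon>0$, the density of finitely supported vectors together with boundedness of $M$ lets me pick a unit vector $\xi$ in the range of some $P_m$ with $\|M\xi\|>\|M\|-\varepsilon$. The tridiagonal structure of $M$ now does the decisive work: $M\xi$ still lies in the range of $P_{m+1}$, so for every $n\ge m+1$ one has $P_nMP_n\xi=M\xi$, and therefore $\|M_n\|\ge\|M\xi\|>\|M\|-\varepsilon$. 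Together with the monotonicity from (\ref{e:4.10}), this yields $\|M_n\|\to\|M\|$.

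The one subtle pitfall I foresee is the bookkeeping in (\ref{e:4.9}): a priori $\lambda_{\max}(M_n)$ is merely the largest eigenvalue, not the largest in absolute value, so the identification with $\|M_n\|$ genuinely requires the tridiagonal, zero-diagonal structure through the $D$-conjugation. Everything else is essentially a standard truncation argument for a bounded operator on $\ell^2(\N_0)$ whose matrix is banded.
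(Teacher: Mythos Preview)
Your proposal is correct and follows essentially the same approach as the paper: the unitary equivalence $M\cong m_t$ for (\ref{e:4.8}), the sign-conjugation $DM_nD^{-1}=-M_n$ for (\ref{e:4.9}), and the compression/truncation viewpoint for (\ref{e:4.10})--(\ref{e:4.11}). The only minor difference is that for (\ref{e:4.11}) the paper simply invokes the general fact that $\ell^2(\{0,\ldots,n\})$ is an increasing sequence of subspaces with dense union, whereas you exploit the tridiagonal structure to get $P_nMP_n\xi=M\xi$ exactly for $n\ge m+1$; both arguments are valid and yield the same conclusion.
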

\begin{proof}
  Since $(p_n)_{n=0}^\infty$ is an orthonormal basis for $L^2(\mu)$
  $$m_t=UMU^*$$
  where $U$ is the unitary matrix from $\ell^2(\N_0)$ to $L^2(\mu)$ for which $U\delta_n=p_n$, where $(\delta_n)_{n=0}^\infty$ is the standard basis for $\ell^2(\N_0)$. Hence
  $$||M||=||m_t||_{B(L^2(\mu))}.$$
  But the norm of the multiplication operator $m_t$ is just the $L^\infty(\mu)$-norm of the function $t\to t$ $(t\in\R)$. Hence
  $$||M||=\max\{|x|\mid x\in \Supp(\mu)\}=\max(\Supp(\mu)),$$
  where the last equality follows from the assumption that $\mu$ is symmetric. This proves (\ref{e:4.8}).
  Since $M_n$ is a self-adjoint  matrix
  $$||M_n||=\max\{|\lambda|\mid \lambda \text{ eigenvalue of } M_n \}.$$
  Since $G_n M_n G_n^{-1}= -M_n$, where $G_n$ is the diagonal matrix:
  $$G_n=\mathrm{diag}(1,-1,1,-1,\ldots,(-1)^n),$$
  the set of eigenvalues of $M_n$ and $-M_n$ are the same. Hence
  $$||M_n||=\lambda_{\max}(M_n)=-\lambda_{min}(M_n)$$
  proving (\ref{e:4.9}). Finally (\ref{e:4.10}) and (\ref{e:4.11}) follows from the fact that $\ell^2\{(0,\ldots,n)\}$ is an increasing sequence of subspaces of $\ell^2(\N_0)$ whose union is dense in $\ell^2(\N_0)$.
\end{proof}

\begin{pro}\label{p:4.4}

  Let $\mu$ and $M$ be as in Proposition~\ref{p:4.2}. Then
  \begin{equation}\label{e:4.12and4.13}
    \liminf_{n\to\infty}(\alpha_{n-1}+\alpha_n)\quad \le \quad ||M||\quad \le\quad  \sup_{n\ge 2}(\alpha_{n-1}+\alpha_n).
  \end{equation}
  If $\frac 1{\sqrt2}\alpha_1\le \alpha_2\le \alpha_3$ then
  \begin{equation}\label{e:4.14}
    ||M||\quad \le\quad  \sup_{n\ge 3}(\alpha_{n-1}+\alpha_n).
  \end{equation}
\end{pro}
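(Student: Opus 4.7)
The plan is to analyze the quadratic form $\langle Mx,x\rangle = 2\sum_{n\ge 1}\alpha_n\,\mathrm{Re}(x_{n-1}\overline{x_n})$. By the symmetry $GMG^{-1}=-M$ with $G=\mathrm{diag}(1,-1,1,-1,\ldots)$ (the same trick as in Proposition~\ref{p:4.3}), $-M$ is unitarily equivalent to $M$, so $\lambda_{\max}(M)=\|M\|=\sup_{\|x\|=1}\langle Mx,x\rangle$. Replacing $x$ by $|x|$ can only increase this form, so I may assume $x_n\ge 0$. The upper bound in (\ref{e:4.12and4.13}) then follows from $2ab\le a^2+b^2$ applied to each cross-term: with the convention $\alpha_0=0$,
\begin{equation*}
\langle Mx,x\rangle\le\sum_{n\ge 1}\alpha_n(x_{n-1}^2+x_n^2)=\sum_{k\ge 0}(\alpha_k+\alpha_{k+1})x_k^2,
\end{equation*}
so $\|M\|\le \max\bigl(\alpha_1,\sup_{n\ge 2}(\alpha_{n-1}+\alpha_n)\bigr)=\sup_{n\ge 2}(\alpha_{n-1}+\alpha_n)$, since $\alpha_1\le\alpha_1+\alpha_2$.

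For the lower bound $\|M\|\ge\gamma:=\liminf_{n\to\infty}(\alpha_{n-1}+\alpha_n)$, I test against long flat vectors. Given $\epsilon>0$, pick $N$ so that $\alpha_{n-1}+\alpha_n\ge\gamma-\epsilon$ for every $n\ge N$, and set $x^{(k)}:=k^{-1/2}\sum_{j=0}^{k-1}\delta_{N+j}$, a unit vector. A direct computation gives $\langle Mx^{(k)},x^{(k)}\rangle=\tfrac{2}{k}\sum_{n=N+1}^{N+k-1}\alpha_n$, and rewriting this via the telescoping identity $2\sum_{n=N+1}^{N+k-1}\alpha_n=\sum_{n=N+1}^{N+k-1}(\alpha_{n-1}+\alpha_n)+\alpha_{N+k-1}-\alpha_N$ together with the hypothesis on $\alpha_{n-1}+\alpha_n$ yields $\langle Mx^{(k)},x^{(k)}\rangle\ge\tfrac{(k-1)(\gamma-\epsilon)-\alpha_N}{k}$. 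Letting $k\to\infty$ and then $\epsilon\to 0$ gives $\|M\|\ge\gamma$.

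The hard part is the refined bound (\ref{e:4.14}): under the extra hypothesis $\tfrac{1}{\sqrt{2}}\alpha_1\le\alpha_2\le\alpha_3$ one must absorb the boundary term $\alpha_1+\alpha_2$ that the naive argument leaves in the supremum. The idea is to weight the AM--GM inequality at just the first two cross-terms: apply $2\alpha_n x_{n-1}x_n\le\alpha_n(\lambda_n x_{n-1}^2+\lambda_n^{-1}x_n^2)$ with suitable $\lambda_1,\lambda_2>0$, and keep unit weights for $n\ge 3$. Writing $S:=\sup_{n\ge 3}(\alpha_{n-1}+\alpha_n)$, the coefficients of $x_0^2,x_1^2,x_2^2$ become $c_0=\alpha_1\lambda_1$, $c_1=\alpha_1\lambda_1^{-1}+\alpha_2\lambda_2$ and $c_2=\alpha_2\lambda_2^{-1}+\alpha_3$, while for $k\ge 3$ the coefficient is $\alpha_k+\alpha_{k+1}\le S$ automatically. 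Taking $\lambda_1=\sqrt{2}$ gives $c_0=\sqrt{2}\,\alpha_1\le S$ via the chain $\alpha_1\le\sqrt{2}\alpha_2\le\tfrac{1}{\sqrt{2}}(\alpha_2+\alpha_3)\le\tfrac{S}{\sqrt{2}}$, which invokes both hypotheses. Taking $\lambda_2=(S-\alpha_1/\sqrt{2})/\alpha_2$ makes $c_1=S$ exactly, and the final check $c_2\le S$ reduces to $\alpha_2^2\le(S-\alpha_1/\sqrt{2})(S-\alpha_3)$, which holds because $(S-\alpha_1/\sqrt{2})(S-\alpha_3)\ge(S-\alpha_2)(S-\alpha_3)\ge\alpha_3\alpha_2\ge\alpha_2^2$, using $\alpha_1/\sqrt{2}\le\alpha_2$, then $S\ge\alpha_2+\alpha_3$, then $\alpha_3\ge\alpha_2$. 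Thus $\langle Mx,x\rangle\le S\|x\|^2$ and (\ref{e:4.14}) follows.
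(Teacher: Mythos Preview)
Your argument is correct and follows essentially the same strategy as the paper. For the lower bound you use flat test vectors supported far out, and for the upper bounds you bound the quadratic form via weighted AM--GM on each cross term; this is exactly Schur's test for a symmetric matrix with non-negative entries, which is what the paper invokes explicitly.

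The only notable difference is in the refined bound~(\ref{e:4.14}). The paper modifies just one Schur weight, taking $q_0=\tfrac{1}{\sqrt 2}$ and $q_n=1$ for $n\ge 1$; in your language this is $\lambda_1=\sqrt{2}$ and $\lambda_n=1$ for $n\ge 2$. That choice gives directly $c_0=\sqrt{2}\,\alpha_1$, $c_1=\tfrac{1}{\sqrt 2}\alpha_1+\alpha_2$, and $c_k=\alpha_k+\alpha_{k+1}$ for $k\ge 2$, and the hypothesis $\tfrac{1}{\sqrt 2}\alpha_1\le\alpha_2\le\alpha_3$ immediately yields $\max\{c_0,c_1\}\le 2\alpha_2\le\alpha_2+\alpha_3\le S$. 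Your choice of an additional non-trivial $\lambda_2$ also works, but the extra algebra (solving for $\lambda_2$ and then verifying $c_2\le S$ via the chain of inequalities) is unnecessary: setting $\lambda_2=1$ already does the job with a one-line check.
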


\begin{proof}
  Let
  $$a=\liminf_{n\to\infty}(\alpha_{n-1}+\alpha_n)=\liminf_{n\to\infty}(\alpha_n+\alpha_{n+1}).$$
  Let $\varepsilon>0$ and choose $n_0\in\N$ such that $\alpha_n+\alpha_{n+1}\ge a-\varepsilon$ for $n\ge n_0$. For $k\in\N$ consider the unit vector $x=(x_i)_{i=0}^\infty\in\ell^2(\N_0)$ given by
  $$x_i=\left\{
          \begin{array}{ll}
            (2k+1)^{-1/2}, & n_0\le i \le n_0+2k \\
            0, & \text{otherwise.}
          \end{array}
        \right.$$
Let $\langle \cdot,\cdot \rangle$ denote the inner product in $\ell^2(\N_0)$. Then
\begin{eqnarray*}
  ||M||\ge \langle M x, x\rangle=\frac 2 {2k+1} (\alpha_{n_0}+\alpha_{n_0+1}+\cdots+\alpha_{n_0+2k-1})\ge \frac{2k}{2k+1}(a-\varepsilon).
\end{eqnarray*}
Letting first $k\to\infty$ and next $\varepsilon\to 0$ we get $||M||\ge a$ proving the first inequality in (\ref{e:4.12and4.13}).
Since $M=[m_{ij}]_{i,j=0}^\infty$ is a symmetric matrix with non-negative entries, it follows from Schur's test (see e.g. Exercise 3.2.17 in \cite{GKP}) that if there exists a sequence $(q_n)_{n=0}^\infty$ of strictly positive real numbers and a constant $C>0$ such that
$$\sum_{j=0}^\infty m_{ij}q_j\le C q_i,\qquad  i\in\N_0$$
then $||M||\le C$. Applying this to the constant sequence $q_n=1$, $n\in\N_0$, we get
$$||M||\le \sup\{\alpha_1,\alpha_1+\alpha_2,\alpha_2+\alpha_3,\ldots\}$$
proving the second inequality in (\ref{e:4.12and4.13}).
If we instead let $q_0=\frac 1 {\sqrt 2}$ and $q_n=1$ for all $n\ge 1$, we get
$$||M||\le \sup\{\sqrt 2\alpha_1,\frac 1{\sqrt 2} \alpha_1+\alpha_2, \alpha_2+\alpha_3,\alpha_3+\alpha_4,\ldots\}.$$
Under the assumptions of (\ref{e:4.14}),
$$\max\{\sqrt 2 \alpha_1, \frac 1 {\sqrt 2} \alpha_1+\alpha_2\} \le 2\alpha_2\le \alpha_2+\alpha_3.$$
Hence  (\ref{e:4.14}) holds.
\end{proof}

\begin{rem}\label{r:4.5}

  Note that for $T\in L(\Gamma)$ we have from Section~\ref{s:two}, that
$$m_n(T^*T)=\int_{-||T||}^{||T||} t^{2n} d\mu_{\tilde T}(t)\, dt,\qquad n\in\N_0,$$
and $\mu_{\tilde T}$  is a symmetric compactly supported probability measure on $\R$ and
$$||T||=\max(\Supp(\mu_{\tilde T})).$$
Hence, when applying Propositions~\ref{p:4.2}, \ref{p:4.3} and \ref{p:4.4} to $\mu=\mu_{\tilde T}$ we have
\begin{eqnarray}
&&c_{2n}=m_n(T^*T)\quad\text{and}\quad c_{2n+1}=0,\qquad n\in\N_0\\
&&||T||=||M||=\lim_{n\to\infty} ||M_n||
\end{eqnarray}
and if $\frac 1 {\sqrt 2} \alpha_1\le \alpha_2 \le \alpha_3$,
\begin{equation}\label{e:liminflimsup}
  \liminf_{n\to\infty}(\alpha_{n-1}+\alpha_n) \le ||T|| \le \sup_{n\ge 3} (\alpha_{n-1}+\alpha_n).
\end{equation}
The condition $\frac 1 {\sqrt 2} \alpha_1\le \alpha_2 \le \alpha_3$ is fulfilled in the two main cases that we consider, namely $T_1=I+A+B$, and  $T_2=A+A^{-1}+B+B^{-1}$ (see Tables~\ref{t:Case1NumbersForEstimatingTheNorm}-\ref{t:Case2NumbersForEstimatingTheNorm}).
Actually in these two cases, the right hand side of (\ref{e:liminflimsup}) can be sharpened further to
\begin{equation}
   ||T|| \le \limsup_{n\to\infty} (\alpha_{n-1}+\alpha_n)
\end{equation}
  (See Appendix \ref{a:3}, Corollary~\ref{c:A8}).
\end{rem}

\begin{rem}\label{r:4.6}

  If $\mu$ is finitely supported, then $L^2(\R,\mu)$ is finite dimensional and
$$d=\dim L^2(\R,\mu)=|\Supp(\mu)|.$$
Therefore, the Gram-Schmidt orthonormalization process will end after $d-1$ steps and we get a finite matrix
$$M=\left(
     \begin{array}{ccccc}
              0 & \alpha_1 &         &  & \\
       \alpha_1 & 0        & \alpha_2&  &0 \\
                & \alpha_2 & \ddots  & \ddots &\\
                &          &  \ddots & \ddots  &\alpha_{d-1}\\
       0        &          &         &\alpha_{d-1} & 0 \\
     \end{array}
   \right)
$$
where the numbers $\alpha_n$ $(1\le n\le d-1)$ are given by (\ref{e:4.7}).
This will however not happen in the two main cases we are interested in, (see Appendix \ref{a:3}, Corollary~\ref{c:A4}).
\end{rem}

The following proposition extends Proposition~\ref{p:4.1}.

\begin{pro}\label{p:4.7}

Let $T\in L(\Gamma)$, $T\ne0$, and let $(M_n)_{n=1}^\infty$ be as in Remark~\ref{r:4.5}. Then
$$\frac{m_n(T^*T)}{m_{n-1}(T^*T)}\le ||M_n||^2=(\lambda_{\max}(M_n))^2\le ||T||^2,\qquad n\in\N.  $$
\end{pro}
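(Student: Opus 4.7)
The second inequality $||M_n||^2 \le ||T||^2$ is immediate from Proposition~\ref{p:4.3} and Remark~\ref{r:4.5}, since the sequence $(||M_n||)_{n\ge 1}$ increases to $||M|| = ||T||$. The middle identity $||M_n||^2 = (\lambda_{\max}(M_n))^2$ is just (\ref{e:4.9}) squared.

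For the first inequality I would run the argument of Proposition~\ref{p:4.1} at the vector state $X \mapsto \langle X\delta_0, \delta_0\rangle$ rather than at the trace. The positive self-adjoint operator $S := M_n^2 \in B(\ell^2(\{0,\ldots,n\}))$ has norm $||S|| = ||M_n||^2$, and its scalar spectral measure $\nu$ at the unit vector $\delta_0$ is a probability measure supported in $[0, ||M_n||^2]$ whose moments are $\int t^k\, d\nu(t) = \langle S^k \delta_0, \delta_0\rangle = (M_n^{2k})_{00}$. Since $t \le ||M_n||^2$ on $\Supp(\nu)$, one obtains
\begin{equation*}
(M_n^{2n})_{00} = \int t^n\, d\nu(t) \le ||M_n||^2 \int t^{n-1}\, d\nu(t) = ||M_n||^2 \cdot (M_n^{2n-2})_{00}.
\end{equation*}

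The remaining (and only substantive) step is the identification
\begin{equation*}
(M_n^{2k})_{00} = m_k(T^*T), \qquad 0 \le k \le n,
\end{equation*}
which combined with the preceding display yields $m_n(T^*T) \le ||M_n||^2 \cdot m_{n-1}(T^*T)$. To prove it I would use that $M_n$ is the top-left $(n+1)\times(n+1)$ corner of the tridiagonal matrix $M$ of Proposition~\ref{p:4.2}. The entry $(M^j)_{00}$ is a weighted sum over walks of length $j$ on $\N_0$ starting and ending at $0$, and since any such walk reaches height at most $j/2$, truncating $M$ to $M_n$ leaves this entry unchanged whenever $j \le 2n$, giving $(M_n^j)_{00} = (M^j)_{00}$ for $0 \le j \le 2n$. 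Finally, under the unitary $U\colon \ell^2(\N_0) \to L^2(\mu_{\tilde T})$ with $U\delta_j = p_j$ (from the proof of Proposition~\ref{p:4.3}), $M$ is intertwined with multiplication by $t$, so $(M^j)_{00} = \langle t^j \cdot 1, 1\rangle_{L^2(\mu_{\tilde T})} = c_j$; by Remark~\ref{r:4.5}, $c_{2k} = m_k(T^*T)$ for $k \le n$. The main obstacle is just the walk-confinement observation; everything else is routine spectral theory.
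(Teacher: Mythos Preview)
Your proof is correct and follows the same overall architecture as the paper's: both reduce the first inequality to the identity $m_k(T^*T)=\|M_n^k\delta'_0\|_2^2$ (equivalently $(M_n^{2k})_{00}=m_k(T^*T)$) for $0\le k\le n$, and then bound $m_n$ by $\|M_n\|^2 m_{n-1}$.

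The main difference lies in how this key identity is established. The paper isolates it as a lemma and proves a slightly stronger vector statement: since $t^k$ has degree $k$, one has $M^k\delta_0\in\Span(\delta_0,\ldots,\delta_k)$, and then an induction shows $(E_nME_n)^k\delta_0=M^k\delta_0$ for $k\le n$, whence $\|M_n^k\delta'_0\|_2^2=\|M^k\delta_0\|_2^2=\|t^k\|_{L^2(\mu_{\tilde T})}^2=m_k(T^*T)$. You instead argue combinatorially that $(M^j)_{00}$ is a weighted sum over nearest-neighbour walks on $\N_0$ of length $j$ from $0$ to $0$, which necessarily stay in $\{0,\ldots,\lfloor j/2\rfloor\}$, so truncation to $M_n$ is invisible for $j\le 2n$. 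Both arguments encode the same tridiagonal ``one-step'' confinement; the paper's polynomial-degree phrasing yields the full vector $M^k\delta_0$, while your walk argument tracks only the $(0,0)$ entry but recovers the norm via self-adjointness. For the final step the paper uses the operator-norm inequality $\|M_n x\|\le\|M_n\|\,\|x\|$ directly, whereas you pass through the scalar spectral measure of $M_n^2$ at $\delta_0$; these are equivalent formulations of the same bound.
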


 Let $\delta_0, \delta_1,\delta_2,\ldots$ denote the standard orthonormal basis for   $\ell^2(\N_0)$ (i.e. $\delta_0=(1,0,0,\ldots)$), and
$\delta'_0, \delta'_1,\delta'_2,\ldots,\delta'_n$ denote the standard basis for $\ell^2(\{0,1,\ldots,n\})$.
For the proof of Proposition~\ref{p:4.7} we will need the following lemma:

\begin{lem} \label{l:mnTOMn}

  For $n\in\N_0$ and $k=0,\ldots,n$, the moments $m_k(T^*T)$ are given by $$m_k(T^*T)=||(M_n)^k\delta'_0||^2_2.$$
\end{lem}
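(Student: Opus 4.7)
The plan is to reduce everything to the multiplication operator $m_t$ on $L^2(\mu)$ for $\mu = \mu_{\tilde T}$ via the unitary $U:\ell^2(\N_0)\to L^2(\mu)$ sending $\delta_j \mapsto p_j$ (where $(p_j)_{j=0}^\infty$ is the orthonormal polynomial basis from Proposition~\ref{p:4.2}). Since $\mu$ is a probability measure we have $c_0 = 1$, hence $p_0 \equiv 1$, i.e.\ $U\delta_0 = 1 \in L^2(\mu)$. By Proposition~\ref{p:4.2}, $M = U^* m_t U$, so iterating gives $M^k \delta_0 = U^*(t^k \cdot 1) = U^*(t^k)$, and therefore
\[
\|M^k \delta_0\|_2^2 = \|t^k\|_{L^2(\mu)}^2 = \int_\R t^{2k}\,d\mu(t) = c_{2k} = m_k(T^*T),
\]
using Remark~\ref{r:4.5}. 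This handles the infinite-matrix version of the claim.

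The next step is to transfer this identity from $M$ to its truncation $M_n$. The key observation is that $M$ is tridiagonal with entries $m_{j,j\pm 1}=\alpha_{\max(j,j\pm1)}$, so $M\delta_j \in \Span\{\delta_{j-1},\delta_{j+1}\}$ (with $\delta_{-1}:=0$). By induction on $k$, starting from $M^0\delta_0 = \delta_0$, this shows that $M^k \delta_0$ is supported in $\Span\{\delta_0,\ldots,\delta_k\}$.

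Now for $k \le n$, I would compare the action of $M$ and $M_n$ step by step. Identify $\ell^2(\{0,\ldots,n\})$ with $\Span\{\delta_0,\ldots,\delta_n\}\subset \ell^2(\N_0)$ via $\delta'_j \leftrightarrow \delta_j$. Then for every $j \le n-1$, $M_n\delta'_j$ and $M\delta_j$ have the same coordinates in this identification (they only involve $\alpha_j,\alpha_{j+1}$ and indices $j\pm 1 \le n$). A second induction on $k$, for $0 \le k \le n$, shows that $M_n^k \delta'_0$ has support in $\{0,\ldots,k\} \subseteq \{0,\ldots,n-1\}$ for $k \le n-1$, and at each stage agrees coordinate-by-coordinate with $M^k \delta_0$: the inductive hypothesis places $M_n^{k-1}\delta'_0$ entirely in $\Span\{\delta'_0,\ldots,\delta'_{n-1}\}$, where $M_n$ acts exactly like $M$, so one more application keeps us in $\Span\{\delta'_0,\ldots,\delta'_n\}$ and preserves the agreement. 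Consequently, for all $k \le n$,
\[
\|M_n^k \delta'_0\|_2^2 = \|M^k \delta_0\|_2^2 = m_k(T^*T),
\]
which is the desired identity.

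The only point requiring any care is the boundary index $j=n$ of $M_n$ (where the matrix is truncated and $M_n\delta'_n = \alpha_n \delta'_{n-1}$ rather than also having an $\alpha_{n+1}\delta'_{n+1}$ term). This is precisely why the hypothesis $k \le n$ is needed: the induction never reaches the last coordinate, so the truncation effect is invisible. No other obstacle arises — the result is essentially a tridiagonality/support-growth argument combined with the fact that $p_0 \equiv 1$ under the probability normalization.
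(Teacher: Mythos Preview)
Your proof is correct and follows essentially the same approach as the paper: both arguments compute $\|M^k\delta_0\|_2^2 = m_k(T^*T)$ via the unitary equivalence $M \cong m_t$ on $L^2(\mu_{\tilde T})$ with $p_0=1$, and then use the tridiagonality of $M$ and an induction on $k$ to show that $M_n^k\delta'_0$ coincides with $M^k\delta_0$ for $k\le n$. The only cosmetic difference is that the paper packages the truncation step via the compression $E_nME_n$ rather than the coordinate-wise comparison you write out.
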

\begin{proof}
 Recall that $p_0=1$ as it is the first element of the orthonormal basis (i.e. first step in Gram-Schmidt).
 Since $p_0,\ldots,p_n$ is an orthonormalization of $1,t,\ldots,t^n$, we have
 $$(m_t)^np_0=(m_t)^n1=t^n\in\Span(1,t,\ldots,t^n)=\Span(p_0,p_1,\ldots,p_n).$$
 Thus for $k=0,\ldots,n$ we have
 $$M^k\delta_0\in\Span(\delta_0,\ldots,\delta_n)$$
  and
 $$||M^k\delta_0||_2^2=||(m_t)^k p_0||_2^2=||t^k||_2^2=\int_{-||T||}^{||T||} t^{2k}\,d\mu_{\tilde T}(t)=m_k(T^*T),$$
 as $M$ is the matrix of $m_t$ with respect to the ONB $p_0,p_1,\ldots$.
 Let $E_n$ be the projection of $\ell^2(\N_0)$ onto $\Span(\delta_0,\ldots,\delta_n) $.
 We claim that
  $$(E_nME_n)^k\delta_0=M^k\delta_0.$$
 Since
 $$E_nME_n=\left(
                  \begin{array}{cc}
                    M_n & 0 \\
                    0 & 0 \\
                  \end{array}
                \right),$$
it follows from the claim that $$||(M_n)^k\delta'_0||_2^2=||(E_nME_n)^k\delta_0||_2^2=||M^k\delta_0||_2^2=m_k(T^*T),$$
which proves the Lemma.
The proof of  the claim is done by induction in $k$. For $k=1$ we have
$E_nM E_n\delta_0=E_nM\delta_0=M\delta_0$ because $M\delta_0\in\Span\{\delta_0,\delta_1\}$ and $n\ge 1$.
Assume next that $(E_nME_n)^k\delta_0=M^k\delta_0$ for  $k\le n-1$. Then
$(E_nME_n)^{k+1}\delta_0=E_nME_nM^k\delta_0=E_nM^{k+1}\delta_0=M^{k+1}\delta_0$ as $M^k\delta_0\in\Span(\delta_0,\ldots,\delta_n)$, which proves the claim.

\end{proof}

\begin{proof}[Proof of Proposition~\ref{p:4.7}]
By the previous Lemma~\ref{l:mnTOMn} we have
\begin{eqnarray*}
   m_n=||(M_n)^n\delta'_0||_2^2=||M_n(M_n)^{n-1}\delta'_0||_2^2\le||M_n||^2||(M_n)^{n-1}\delta'_0||_2^2=||M_n||^2m_{n-1}
 \end{eqnarray*}
 Hence the first inequality of the proposition.
The rest of the proof of Proposition~\ref{p:4.7} follows from Proposition~\ref{p:4.3} and Remark~\ref{r:4.5}.
\end{proof}
\begin{table}[b]
\caption{ Estimating the norm $||I+A+B||$.\label{t:Case1NumbersForEstimatingTheNorm}}
{\footnotesize
\begin{tabular}{cccccc}
\hline
$n$ & $m_n^{\frac1{2n}}$ & $\left(\frac{m_n}{m_{n-1}}\right)^{\frac12}$ & $\alpha_n$ & $\lambda_{\max}(M_n)$& $\alpha_{n-1}+\alpha_n$\\
\hline
1 & 1.73205 & 1.73205 & 1.73205 & 1.73205 & $---$ \\
2 & 1.96799 & 2.23607 & 1.41421 & 2.23607 & 3.14626 \\
3 & 2.10501 & 2.40832 & 1.41421 & 2.44949 & 2.82843 \\
4 & 2.19710 & 2.49828 & 1.41421 & 2.56155 & 2.82843 \\
5 & 2.26432 & 2.55438 & 1.41421 & 2.62860 & 2.82843 \\
6 & 2.31606 & 2.59306 & 1.41421 & 2.67233 & 2.82843 \\
7 & 2.35741 & 2.62151 & 1.41421 & 2.70265 & 2.82843 \\
8 & 2.39140 & 2.64342 & 1.44338 & 2.72620 & 2.85759 \\
9 & 2.41994 & 2.66090 & 1.41303 & 2.74445 & 2.85641 \\
10 & 2.44434 & 2.67524 & 1.43768 & 2.75941 & 2.85071 \\
11 & 2.46548 & 2.68728 & 1.41733 & 2.77154 & 2.85500 \\
12 & 2.48403 & 2.69756 & 1.44610 & 2.78200 & 2.86343 \\
13 & 2.50048 & 2.70649 & 1.41406 & 2.79072 & 2.86016 \\
14 & 2.51518 & 2.71434 & 1.45286 & 2.79850 & 2.86691 \\
15 & 2.52842 & 2.72131 & 1.41509 & 2.80515 & 2.86795 \\
16 & 2.54043 & 2.72757 & 1.45239 & 2.81118 & 2.86749 \\
17 & 2.55138 & 2.73323 & 1.41982 & 2.81645 & 2.87222 \\
18 & 2.56143 & 2.73839 & 1.45400 & 2.82132 & 2.87382 \\
19 & 2.57069 & 2.74311 & 1.42044 & 2.82563 & 2.87444 \\
20 & 2.57925 & 2.74746 & 1.45571 & 2.82966 & 2.87615 \\
21 & 2.58720 & 2.75148 & 1.42133 & 2.83326 & 2.87704 \\
22 & 2.59461 & 2.75522 & 1.45768 & 2.83665 & 2.87901 \\
23 & 2.60154 & 2.75871 & 1.42269 & 2.83972 & 2.88037 \\
24 & 2.60803 & 2.76198 & 1.45807 & 2.84263 & 2.88076 \\
25 & 2.61414 & 2.76505 & 1.42638 & 2.84529 & 2.88445 \\
26 & 2.61989 & 2.76793 & 1.45596 & 2.84782 & 2.88234 \\
27 & 2.62533 & 2.77066 & 1.42841 & 2.85014 & 2.88437 \\
28 & 2.63047 & 2.77323 & 1.45785 & 2.85237 & 2.88626 \\
29 & 2.63535 & 2.77568 & 1.42883 & 2.85443 & 2.88669 \\
30 & 2.63998 & 2.77800 & 1.45815 & 2.85641 & 2.88698 \\
31 & 2.64439 & 2.78021 & 1.43056 & 2.85824 & 2.88871 \\
32 & 2.64860 & 2.78231 & 1.45854 & 2.86002 & 2.88910 \\
33 & 2.65261 & 2.78432 & 1.43178 & 2.86167 & 2.89032 \\
34 & 2.65645 & 2.78625 & 1.45860 & 2.86327 & 2.89039 \\
35 & 2.66012 & 2.78809 & 1.43344 & 2.86477 & 2.89204 \\
36 & 2.66365 & 2.78986 & 1.45806 & 2.86623 & 2.89150 \\
37 & 2.66702 & 2.79155 & 1.43523 & 2.86759 & 2.89329 \\
\hline
\end{tabular}}
\end{table}

\begin{table}[h]
\caption{Estimating the norm  $||A+B+A^{-1}+B^{-1}||$.\label{t:Case2NumbersForEstimatingTheNorm}}
{\footnotesize
\begin{tabular}{cccccc}
\hline
$n$ & $m_n^{\frac1{2n}}$ & $\left(\frac{m_n}{m_{n-1}}\right)^{\frac12}$ & $\alpha_n$ & $\lambda_{\max}(M_n)$& $\alpha_{n-1}+\alpha_n$\\
\hline
1 & 2.00000 & 2.00000 & 2.00000 & 2.00000 & $---$\\
2 & 2.30033 & 2.64575 & 1.73205 & 2.64575 & 3.73205\\
3 & 2.47884 & 2.87849 & 1.73205 & 2.93352 & 3.46410\\
4 & 2.60058 & 3.00287 & 1.73205 & 3.08891 & 3.46410\\
5 & 2.69061 & 3.08298 & 1.78471 & 3.19184 & 3.51676\\
6 & 2.76083 & 3.14038 & 1.76554 & 3.26439 & 3.55025\\
7 & 2.81769 & 3.18437 & 1.79564 & 3.32000 & 3.56119\\
8 & 2.86506 & 3.21963 & 1.77500 & 3.36276 & 3.57064\\
9 & 2.90535 & 3.24883 & 1.81110 & 3.39790 & 3.58610\\
10 & 2.94023 & 3.27358 & 1.79214 & 3.42682 & 3.60324\\
11 & 2.97083 & 3.29495 & 1.81693 & 3.45164 & 3.60907\\
12 & 2.99800 & 3.31368 & 1.80006 & 3.47272 & 3.61699\\
13 & 3.02234 & 3.33028 & 1.82585 & 3.49133 & 3.62591\\
14 & 3.04432 & 3.34515 & 1.80841 & 3.50755 & 3.63425\\
15 & 3.06433 & 3.35858 & 1.83203 & 3.52217 & 3.64043\\
16 & 3.08264 & 3.37080 & 1.81426 & 3.53513 & 3.64629\\
17 & 3.09949 & 3.38198 & 1.83702 & 3.54697 & 3.65129\\
18 & 3.11507 & 3.39228 & 1.82036 & 3.55761 & 3.65739\\
19 & 3.12954 & 3.40180 & 1.84173 & 3.56745 & 3.66210\\
20 & 3.14303 & 3.41065 & 1.82478 & 3.57639 & 3.66651\\
21 & 3.15565 & 3.41890 & 1.84556 & 3.58471 & 3.67034\\
22 & 3.16749 & 3.42663 & 1.82942 & 3.59235 & 3.67498\\
23 & 3.17863 & 3.43388 & 1.84878 & 3.59952 & 3.67820\\
24 & 3.18914 & 3.44071 & 1.83311 & 3.60613 & 3.68189\\
\hline
\end{tabular}}
\end{table}

\begin{figure}[H]
\centerline{
\includegraphics[scale=.65]{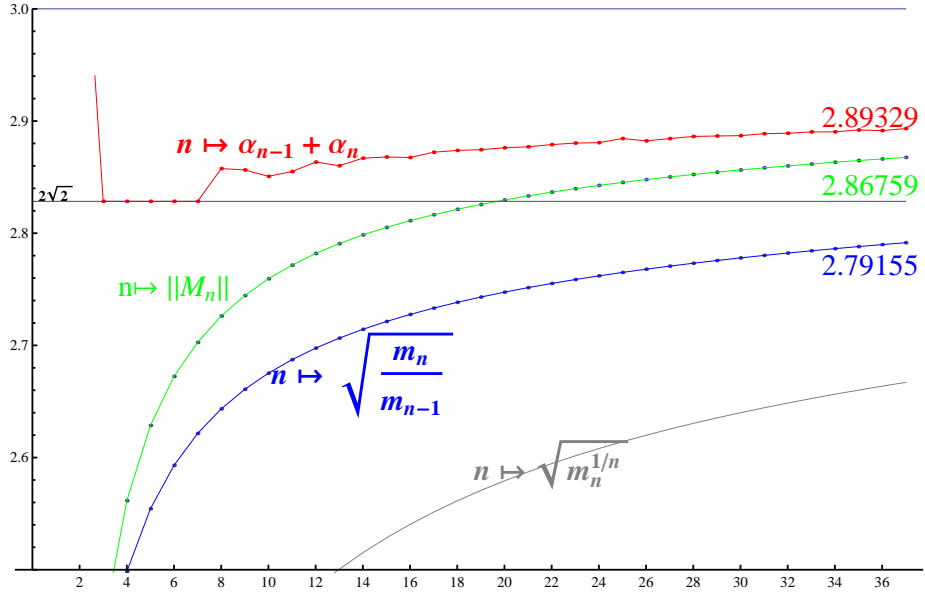}
}
\vspace*{8pt}
  \caption{Estimating the norm $||I+A+B||$.\label{f:IABgraphs}}
\end{figure}

\begin{figure}[h]
\centerline{\includegraphics[scale=.7]{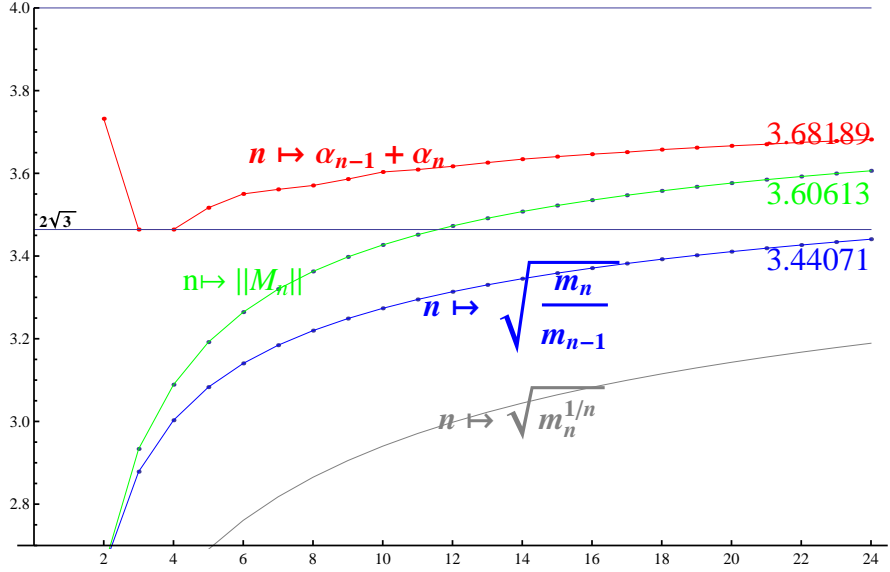}}
\vspace*{8pt}
  \caption{Estimating the norm $||A+B+A^{-1}+B^{-1}||$\label{f:ABAinvBinvgraphs}}
\end{figure}

Recall that from Proposition~\ref{p:4.1} and Proposition~\ref{p:4.7} that
$$m_n^{\frac{1}{2n}} \le \left(\frac{ m_n}{m_{n-1}}\right)^{1/2}\le \lambda_{\max}(M_n)\le ||T||,$$
and all three sequences on the left hand side converges monotonically to $||T||$ as $n\to \infty$.
Note that in both cases $m_n^{\frac 1{2n}}$ and $(m_n/m_{n-1})^{1/2}$ are poor lower estimates
(see Tables~\ref{t:Case1NumbersForEstimatingTheNorm}, \ref{t:Case2NumbersForEstimatingTheNorm} and Figs.~\ref{f:IABgraphs}, \ref{f:ABAinvBinvgraphs}).
For the listed range of integers $n$, they both stay well below the known lower bound $||I+A+B||>2\sqrt 2\approx 2.82824$ in case 1, (resp. $||A+A^{-1}+B+B^{-1}||>2\sqrt 3\approx 3.46410$ in case 2),
while the lower estimates $\lambda_{\max}(M_n)$ stay above this value for $n\ge 20$ in case 1 (resp. for $n\ge 12$ in case 2).
The best exact lower bound for $||T||$ in case 1 ($T=I+A+B$) we can obtain from our results is
$$||I+A+B||\ge \lambda_{\max}(M_{37})= 2.86759.$$
Note however that by Proposition~\ref{p:4.4},
$$||I+A+B||\ge \liminf_{n\to\infty} (\alpha_{n-1}+\alpha_n)$$
and since $(\alpha_{n-1}+\alpha_n)_{n=1}^\infty$ appear to be monotonically increasing for $n\ge 26$ our computation results make it very likely that actually
$$||I+A+B||\ge \alpha_{36}+\alpha_{37}=2.89329.$$
To get some prediction of the actual value of $||I+A+B||$, we made a least squares fitting of the 26 numbers
$$\lambda_{\max}(M_n),\qquad {12\le n\le 37}$$
to a function of the following form
$$f(n)=a-b(n-c)^{-d},\qquad n=12,\ldots,37$$
and found that the optimal values of the parameters $a,b,c,d$ were
$$a=2.950\qquad b=0.630\qquad c=0.571\qquad d=1.900.$$
In particular, this extrapolation argument predicts that
$$||I+A+B||=\lim_{n\to\infty} \lambda_{\max}(M_n)\approx 2.950.$$
However, we can in no way rule out that $||I+A+B||=3$, i.e. that $F$ is amenable.
In the same way, we get in Case 2, ($T=A+B+A^{-1}+B^{-1}$) the precise lower bound
$$||A+A^{-1}+B+B^{-1}||\ge \lambda_{\max}(M_{24})= 3.60613$$
and that most likely we have
$$||A+A^{-1}+B+B^{-1}||\ge \alpha_{23}+\alpha_{24}= 3.68189.$$
Moreover, by making a least squares fitting of the $17$ numbers
$$\lambda_{\max}(M_n),\qquad{8\le n\le 24}$$
to a function of the form
$$f(n)=a-b(n-c)^{-d},\qquad n=8,\ldots,24$$
we found the values of $a,b,c,d$ to be
$$a=3.870\qquad b=1.612 \qquad c=0.573 \qquad d=0.480.$$
In particular, this extrapolation method predicts that
$$||A+A^{-1}+B+B^{-1}||=\lim_{n\to\infty} \lambda_{\max}(M_n)\approx 3.870$$
but again we cannot rule out that $F$ is amenable.

In \cite{BurilloClearyWiest}, Burillo, Cleary and Wiest used probabilistic methods to estimate the moments $m_n$ in Case 2 ($T=A+A^{-1}+B+B^{-1}$) for $n=10,20,\ldots,160$. In their notation, $L=2n$ and $m_n=4^L\hat p(L)$. They found that (see Table~1 in \cite{BurilloClearyWiest})
$$m_{160}^{1/320}=4\hat p(320)^{1/320}\approx 4\cdot 0.9003=3.6012$$
and
$$\left(\frac{ m_{160}}{m_{150}}\right)^{1/20}=4\left(\frac{\hat p(320)}{\hat p(300)}\right)^{1/20}\approx 4\cdot 0.9161=3.6644.$$
Since $m_n/m_{n-1}$  is increasing slowly for $151\le n\le 160$ we also have
$$\left(\frac{m_{160}}{m_{159}}\right)^{1/2}\approx \left(\frac{m_{160}}{m_{150}}\right)^{1/20}\approx 3.6644.$$
Their estimates are based on random samples of words in the generators, and are therefore not precise lower bound for $||A+A^{-1}+B+B^{-1}||$.
In comparison, we found an exact lower bound $3.60613$ of the norm based only on $(m_n)_{1\le n\le 24}$ and a very likely lower bound $3.68189$ based on the same list of moments.
In \cite{ElderRechnitzerWongCogrowth}, Elder, Rechnitzer and Wong also worked on estimating the norm $||A+A^{-1}+B+B^{-1}||$.
They found the lower bound
$$||A+A^{-1}+B+B^{-1}||\ge 3.55368$$
based on computing numerically the largest eigenvalue of the adjacency matrix for the Cayley graph of $F$ restricted to balls in $F$ of size $N\le 10^7$, which corresponds to consider elements of $F$ of distance up to $14$ from the identity element. Moreover, they computed Cohen's cogrowth coefficients $\gamma_{2n}$ for $n=1,2,\ldots,11$ (cf.~Table~3 in \cite{ElderRechnitzerWongCogrowth}). Note that $\gamma_{2n}$ is equal to our ``reduced" number $\eta_n$ (cf.~Section~\ref{s:three}). In the notation of \cite{ElderRechnitzerWongCogrowth}, $\eta_n=p_{2n}$, and $m_n=r_{2n}$, and they use a different method (based on power series) to pass from the $(\eta_n)$-series to the $(m_n)$-series.

\section{\textbf{Estimating Spectral distribution measures}}\label{s:fourB}

In this section, we will estimate the spectral distribution measures $\mu_{\tilde h}$ for $h=I+A+B$ (Case 1) and $h=A+A^{-1}+B+B^{-1}$ (Case 2), based on the moment sequences listed in Section~\ref{s:three}.
This is done by computing the (possibly signed) measures $\nu_N$ given by
$d\nu_N(t)=\rho_N(t)dt$
where $\rho_N$ is the unique polynomial on $\R$ of degree $2N$ for which
\begin{equation}\label{e:4.munyapprox}
  \int_J t^n\, d \mu_{\tilde h}(t)=\int_J t^n\,d \nu_N(t), \qquad 0\le n\le 2N
\end{equation}
where $J=[-3,3]$ in case 1, and $J=[-4,4]$ in  case 2.
We do not know, whether $\mu_{\tilde h}$ has a density with respect to the Lebesgue measure, but if it has a density $f$ and if $f$ happens to be in $L^2(J,dt)$, then $\rho_N$ is simply the orthogonal projection of $f$ onto the subspace of $L^2(J,dt)$ spanned by $1$, $t$, $t^2$, $\ldots$, $t^{2N}$.
Let $(P_n(t))_{n=0}^\infty$ be the sequence of Legendre polynomials.
Then it is is well known that the sequence
$\left(\sqrt{n+\frac12} P_n(t)\right)_{n=0}^\infty$ form an orthonormal basis for $L^2([-1,1],dt)$. Hence
$$p_n(t):=\sqrt{\frac{n+\frac12}{q+1}}P_n(\frac t {q+1}),\qquad n\in\N_0$$
form an orthonormal basis for $L^2(J,dt)$, where $q=2$ in case 1 and $q=3$ in case 2.
Hence in the case $\mu_{\tilde h}$ has density $f\in L^2(J,dt)$, $\rho_N$ is the orthogonal projection of $f$ onto $\Span\{1,t,t^2,\ldots,t^{2N}\}$, i.e.
\begin{equation}\label{e:4.rhoN}
  \rho_N(t)=\sum_{n=0}^{2N} \langle f, p_n\rangle p_n(t)
\end{equation}
where
$$\langle f,p_n\rangle = \int_{-(q+1)}^{q+1} f(s) p_n(s) ds$$
Note that (\ref{e:4.rhoN}) can also be written as
\begin{equation}\label{e:4.rhoNseconddefn}
  \rho_N(t)=\sum_{n=0}^{2N}\left(\int_{-(q+1)}^{q+1} p_n(s)\,d\mu_{\tilde h}(s)\right)p_n(t).
\end{equation}
The latter formula also makes sense if $\mu_{\tilde h}$ does not have an $L^2$-density, and it is not hard to check that (\ref{e:4.rhoNseconddefn}) provides the unique solution to (\ref{e:4.munyapprox}) also when $\mu_{\tilde h}$ does not have an $L^2$-density.
Based on our moment calculations we can compute the polynomials $\rho_N(t)$ for $1\le N\le 37$ in case 1 and for $1\le N\le 24$ in case 2.
Since $\mu_{\tilde h}$  is a symmetric measure, all the odd terms in (\ref{e:4.rhoNseconddefn})  are zero. Hence $\rho_N$ is an even polynomial of degree at most $2N$.

Case 1: In Fig.~\ref{f:IABLebesgueDensity} we have for $h=I+A+B$ plotted $\rho_{37}$ for $0\le t\le 3$ together with the corresponding ``free" density (cf.~Section~\ref{s:one}) and in Figs.~\ref{f:IABLebesgueDensityTail}-\ref{f:IABLebesgueDensityTailSumOddEven} we have plotted first $\rho_{36}$ and $\rho_{37}$ in the tail interval $[2\sqrt 2, 3]$ and next $\frac 12 (\rho_{36}+\rho_{37})$ in the same interval.
The reason is that the plot in Fig.~\ref{f:IABLebesgueDensityTail} shows that for $t$ close to $3$, $\rho_{36}$ and $\rho_{37}$ oscillates around zero with opposite signs, and therefore we expect that $\frac12(\rho_{36}(t)+\rho_{37}(t))dt$ gives a better approximation to the measure $\mu_{\tilde h}$.
Figure~\ref{f:IABLebesgueDensityTailSumOddEven} indicates, that $\mu_{\tilde h}$ has very little mass on the interval $[2.9,3.0]$, and hence
$||h||=\max(\sup(\mu_{\tilde h}))$ could be any number in the interval $[2.9,3.0]$, including the number 2.95 found in Section~\ref{s:four} by an extrapolation argument.  Recall from the end of Section~\ref{s:two} that $\mu_{h^*h}$ is the image measure of $\mu_{\tilde h}$ by the map $t\mapsto t^2$.
Hence Figs.~\ref{f:IABLebesgueDensity}-\ref{f:IABLebesgueDensityTailSumOddEven} can also be used to compute an approximation to the spectral density of $(I+A+B)^*(I+A+B)$.

\begin{figure}[h]
\centerline{
\includegraphics[scale=.7]{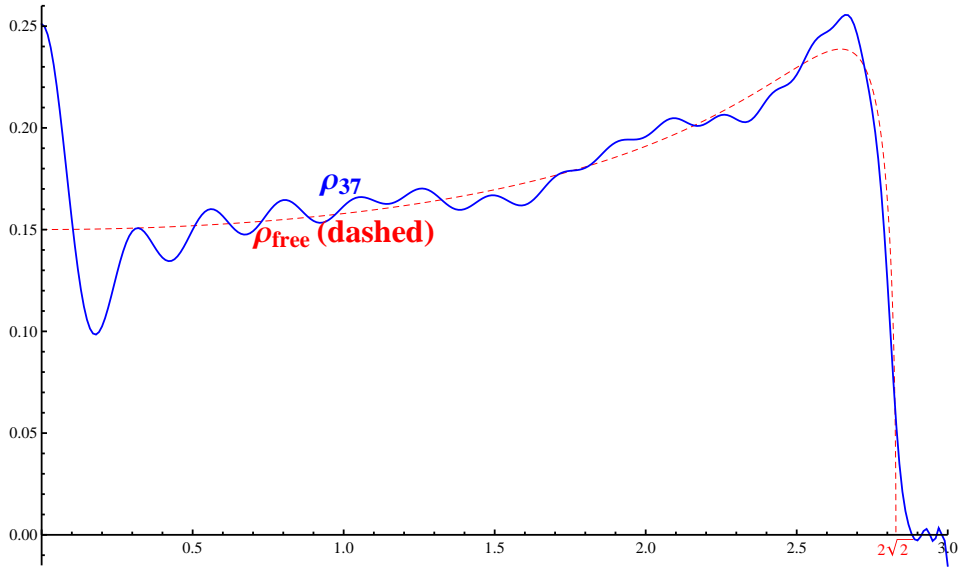}
}
\vspace*{8pt}
  \caption{Estimating the Lebesgue density for $\mu_{\tilde h}$, where $h=I+A+B$ \label{f:IABLebesgueDensity}}
\end{figure}
\begin{figure}[H]
\centerline{
\includegraphics[scale=.57]{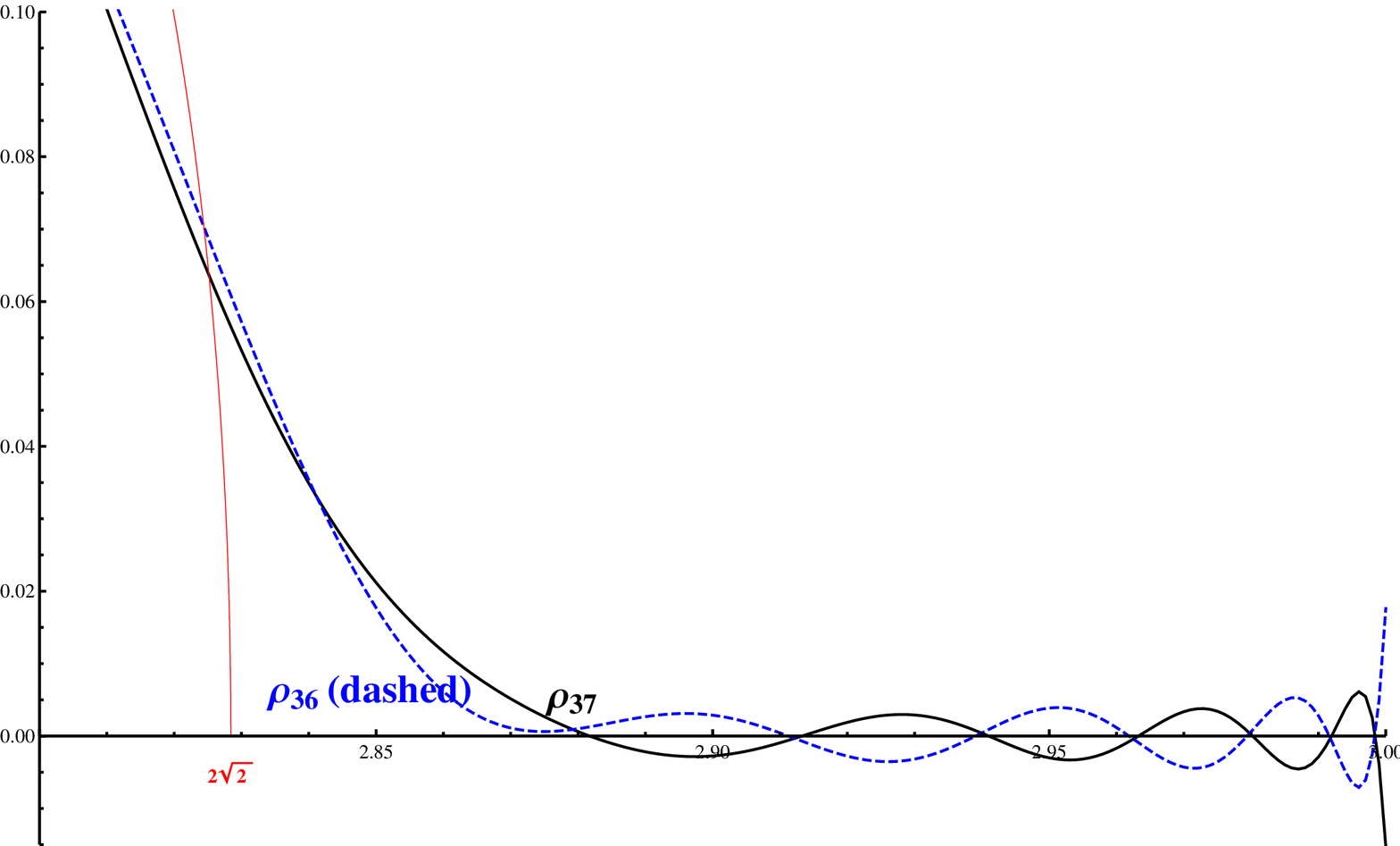}
}
  \caption{Estimating the tail of the Lebesgue density for $\mu_{\tilde h}$, where $h=I+A+B$  \label{f:IABLebesgueDensityTail}}
\end{figure}
\begin{figure}[H]
\centerline{
\includegraphics[scale=.57]{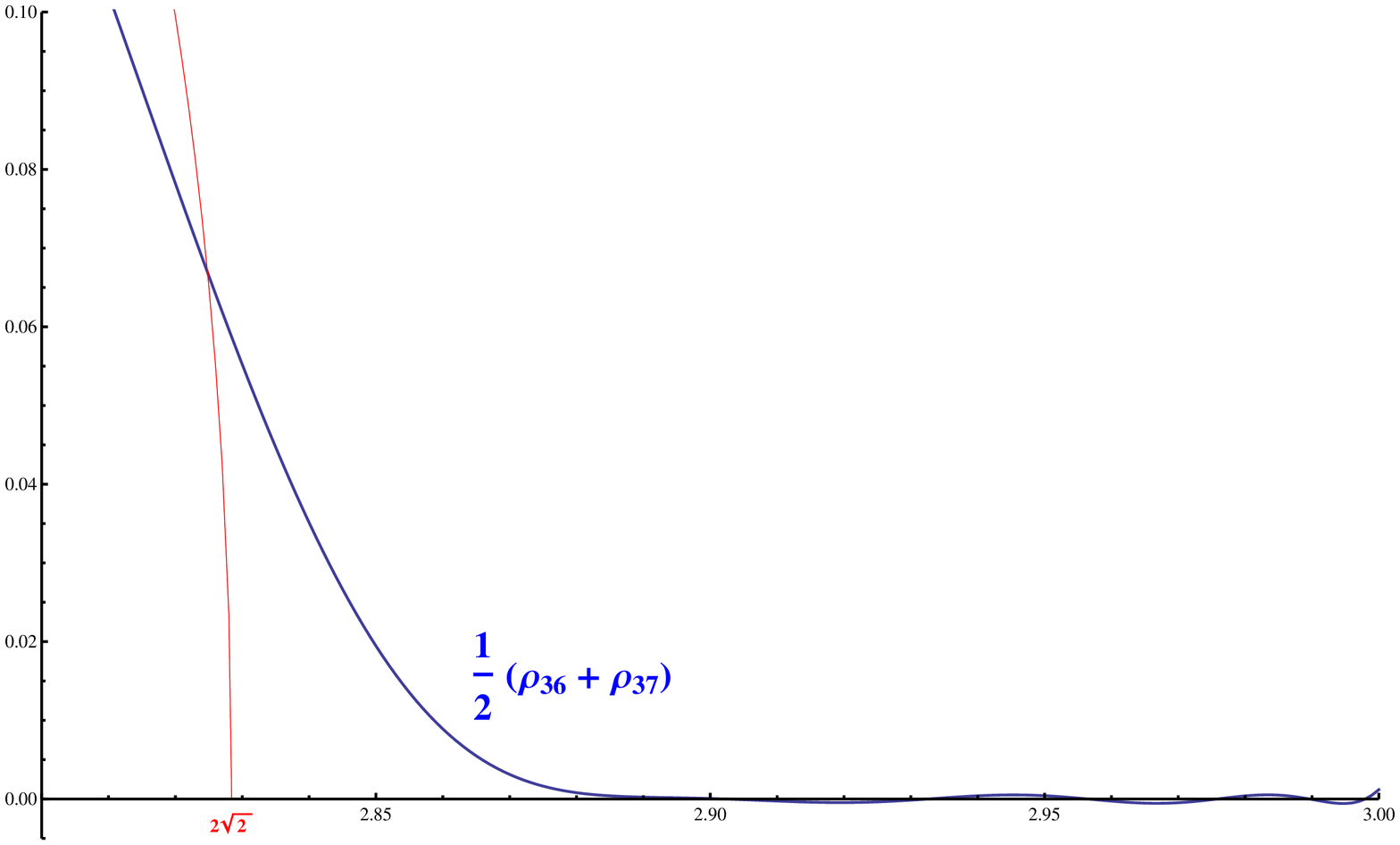}
}
\vspace*{8pt}
  \caption{A better estimate of the Lebesgue density for $\mu_{\tilde h}$, where $h=I+A+B$  \label{f:IABLebesgueDensityTailSumOddEven}}
\end{figure}

\begin{figure}[h]
\centerline{
\includegraphics[scale=.69]{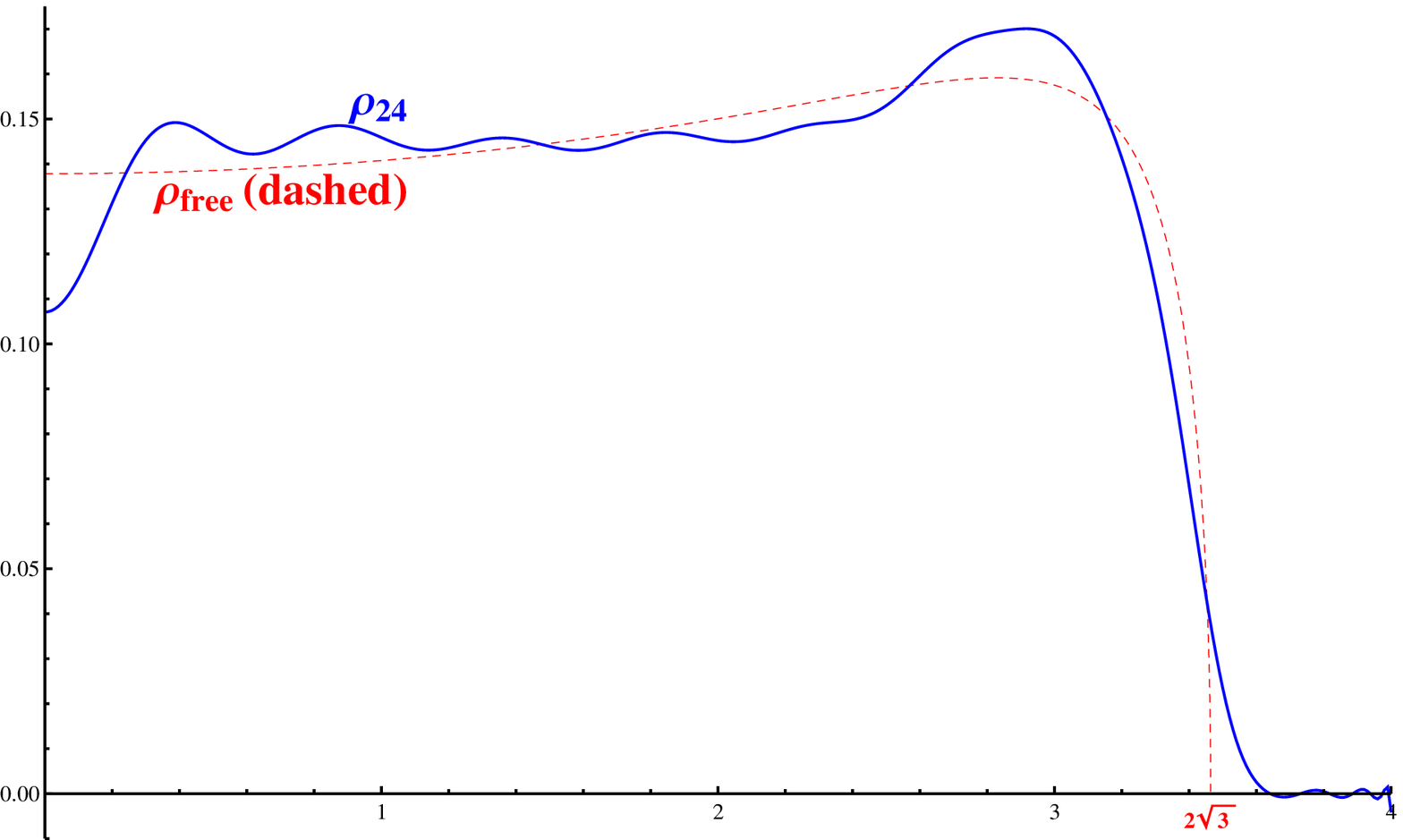}
}
\vspace*{8pt}
  \caption{Estimating the Lebesgue density for $\mu_{\tilde h}=\mu_h$, where $h=A+B+A^{-1}+B^{-1}$ \label{f:ABAinvBinvLebesgueDensity}}
\end{figure}
\begin{figure}[h]
\centerline{
\includegraphics[scale=.69]{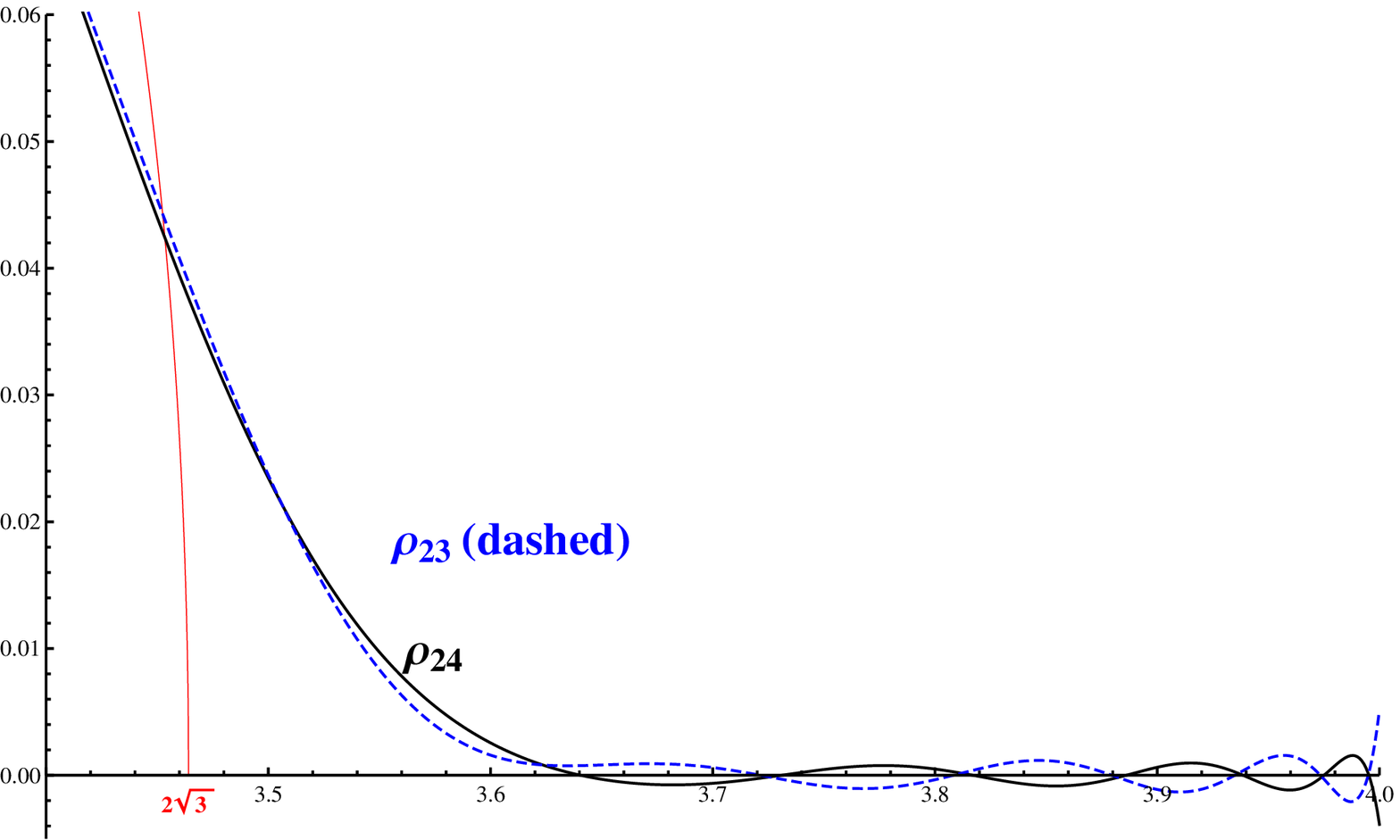}}
  \caption{Estimating the tail of the Lebesgue density for $\mu_{\tilde h}=\mu_h$, where $h=A+B+A^{-1}+B^{-1}$ \label{f:ABAinvBinvLebesgueDensityTail}}
\end{figure}
\begin{figure}[h]
\centerline{
\includegraphics[scale=.65]{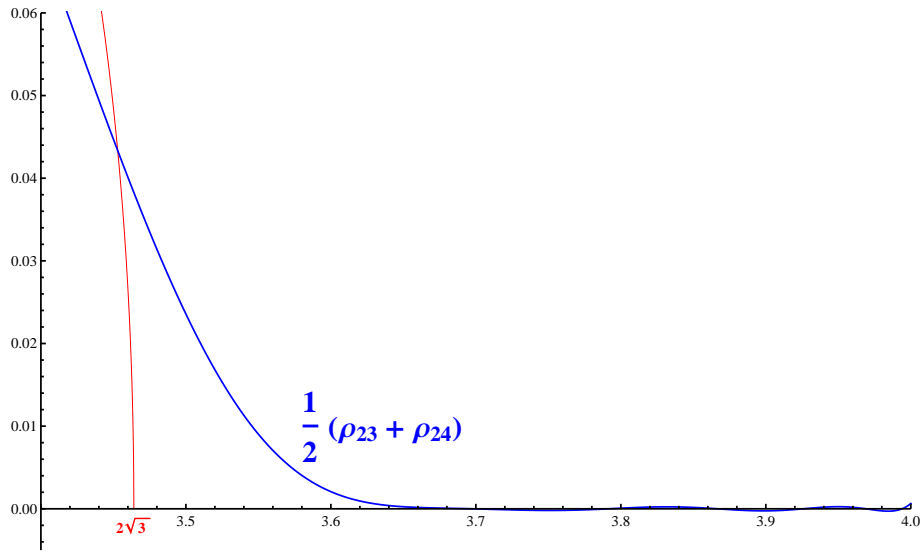}}
  \caption{A better estimate of the Lebesgue density for $\mu_{\tilde h}=\mu_h$, where $h=A+B+A^{-1}+B^{-1}$ \label{f:ABAinvBinvLebesgueDensityTailSumOddEven}}
\end{figure}
\newpage
Case 2:
In Fig.~\ref{f:ABAinvBinvLebesgueDensity} we have for $h=A+A^{-1}+B+B^{-1}$ plotted $\rho_{24}$ for $0\le t\le 4$ together with the corresponding ``free" density, and in Figs.~\ref{f:ABAinvBinvLebesgueDensityTail}-\ref{f:ABAinvBinvLebesgueDensityTailSumOddEven} we have plotted $\rho_{23}$, $\rho_{24}$ and $\frac 12(\rho_{23}+\rho_{24})$ in the tail interval $[2\sqrt 3, 4]$. Again
$\frac 12(\rho_{23}(t)+\rho_{24}(t))dt$ appear to give the best approximation to the measure $\mu_{\tilde h}$ in the tail region, and it shows that $\mu_{\tilde h}$ must have very little mass in the interval $[3.7,4.0]$.
Hence $||h||=\max(\sup(\mu_{\tilde h}))$ could be any number in the interval $[3.7,4.0]$ including the number $3.87$ found in Section~\ref{s:four} by an extrapolation argument.
Note finally, that in the case of $h=A+A^{-1}+B+B^{-1}$, $\mu_h=\mu_{\tilde h}$. To see this, we just have to check that $\mu_h$ and $\mu_{\tilde h}$ have the same moments. The even moments coincide because $h=h^*$ and therefore
$$m_{2n}(\tilde h)=\tau((h^*h)^n)=\tau(h^{2n})=m_{2n}(h),\qquad n\in \N_0.$$
Moreover,
$$m_{2n+1}(\tilde h)=0=m_{2n+1}(h), \qquad n\in\N_0,$$
where the first equality follows because $\mu_{\tilde h}$ is symmetric and the second equality follows because no word of odd length in $A,A^{-1},B,B^{-1}$ can represent the identity in $F$.

\section{\textbf{Relations between $||h_n||_2^2$,  $\xi_n$, $\eta_n$, $\zeta_n$ and $m_n$}}\label{s:five}

\subsection{\underline{The polynomials $Q_n$ with constant $q\in\N$}}\label{sss:QnConstantq}
Let $q\in\N$ be a fixed natural number.
Define the polynomials $(Q_n)_{n\in\N}$ in $\C[t]$ recursively by
\begin{eqnarray}\label{e:QnConstantq}
  &&Q_1(t)=t\\
  \nonumber &&Q_2(t)=t^2-(q+1)\\
  \nonumber &&Q_{n+1}(t)=t Q_n(t)-q Q_{n-1}(t), \qquad n\ge 2.
\end{eqnarray}
 Since $Q_n$ is an even polynomial for $n$ even and an odd polynomial for $n$ odd, there exists polynomials $(Q_m^{(1)})_{m\ge1}$ and $(Q_m^{(2)})_{m\ge0}$ in $\C[t]$ such that
\begin{eqnarray}\label{e:Q1nQ2n}
  &&Q_{2m}(t)=Q_m^{(1)}(t^2) \qquad m\ge 1\\
\nonumber  &&Q_{2m+1}(t)=tQ_m^{(2)}(t^2)\qquad m\ge 0.
\end{eqnarray}
Next we write the $Q_n$ polynomials in terms of Chebyshev polynomials.

\begin{pro}\label{p:QnRelatedToChebyshevPolys}

  Let $(T_n)_{n\in\N}$ and $(U_n)_{n\in\N}$ be the Chebyshev polynomials of first and second kind. Then
\begin{itemize}
  \item [$(i)$] $  Q_n(t)=\left(\frac 2q \,T_n\left(\frac t{2\sqrt{q}}\right)+\frac{q-1}q\,\, U_n\left(\frac t{2\sqrt{q}}\right)\right)q^{n/2},\qquad n\in\N,\,\,t\in\R.$
  \item [$(ii)$] $ Q_{2n}(t)-(q-1)\sum\limits_{k=1}^{n-1}Q_{2k}(t)=(q-1)+2T_{2n}\left(\frac t{2 \sqrt q}\right) q^n,\qquad n\in\N,\,\,t\in\R$.
\end{itemize}
\end{pro}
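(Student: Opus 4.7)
The plan is to handle the two parts in sequence, using in each case a reduction to the three-term recurrence that both sides satisfy, and then pinning down the equality by checking base cases.

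For part $(i)$, I would observe that the Chebyshev recurrences $T_{n+1}(x)=2xT_n(x)-T_{n-1}(x)$ and $U_{n+1}(x)=2xU_n(x)-U_{n-1}(x)$, combined with the substitution $x=t/(2\sqrt q)$ and multiplication by $q^{n/2}$, both yield the recursion $X_{n+1}(t)=tX_n(t)-qX_{n-1}(t)$ (the factor $2x$ becomes $t/\sqrt q$, which upon multiplying by $q^{(n+1)/2}/q^{n/2}=\sqrt q$ produces $t$, while the $-1$ term acquires the factor $q^{(n+1)/2}/q^{(n-1)/2}=q$). Hence any fixed linear combination $\alpha\,q^{n/2}T_n(t/(2\sqrt q))+\beta\,q^{n/2}U_n(t/(2\sqrt q))$ satisfies the same recurrence as $Q_n$. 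Since the $Q_n$-recursion is prescribed only for $n\ge 2$, it suffices to verify the claimed identity at $n=1$ and $n=2$. Using $T_1=x$, $U_1=2x$, $T_2=2x^2-1$, $U_2=4x^2-1$, a direct substitution with $\alpha=2/q$, $\beta=(q-1)/q$ recovers $Q_1(t)=t$ and $Q_2(t)=t^2-(q+1)$, completing the induction.

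For part $(ii)$, let $L_n(t)$ denote the left-hand side and $R_n(t):=(q-1)+2T_{2n}(t/(2\sqrt q))\,q^n$ the right-hand side. Telescoping the sum in $L_n$ immediately gives the first-order relation
\begin{equation*}
L_n-L_{n-1}=Q_{2n}-qQ_{2n-2},\qquad n\ge 2,
\end{equation*}
while from the definition of $R_n$ one has $R_n-R_{n-1}=2q^nT_{2n}(x)-2q^{n-1}T_{2n-2}(x)$, where $x=t/(2\sqrt q)$. The plan is to check the base case $n=1$ (both equal $t^2-(q+1)$, as was verified in the $Q_2$ check above) and then show that the two differences agree for all $n\ge 2$, which closes the induction.

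To compare the differences I would substitute the formula from part $(i)$ into $Q_{2n}-qQ_{2n-2}$:
\begin{equation*}
Q_{2n}-qQ_{2n-2}=2q^{n-1}\bigl(T_{2n}(x)-T_{2n-2}(x)\bigr)+(q-1)q^{n-1}\bigl(U_{2n}(x)-U_{2n-2}(x)\bigr).
\end{equation*}
The key identity (the standard Chebyshev relation $U_m(x)-U_{m-2}(x)=2T_m(x)$ for $m\ge 2$, which follows at once from the trigonometric representations $T_m(\cos\theta)=\cos m\theta$, $U_m(\cos\theta)=\sin((m+1)\theta)/\sin\theta$) turns the $U$-difference into $2T_{2n}(x)$. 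Collecting terms yields $Q_{2n}-qQ_{2n-2}=2q^{n-1}(qT_{2n}(x)-T_{2n-2}(x))=R_n-R_{n-1}$, exactly as required.

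The only real obstacle is the bookkeeping in the last step; once one recognises that the telescoped difference $L_n-L_{n-1}$ is simply $Q_{2n}-qQ_{2n-2}$ and that the asymmetric coefficients $2/q$ and $(q-1)/q$ in part $(i)$ conspire with the Chebyshev identity $U_m-U_{m-2}=2T_m$ to turn an $U$-combination into a pure multiple of $T$, the proof is essentially algebraic. I would not expect to need any contour integral, generating function, or trigonometric manipulation beyond this single identity.
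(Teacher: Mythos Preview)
Your proof is correct and follows essentially the same approach as the paper: both parts rest on the shared three-term recurrence and the Chebyshev identity $U_m-U_{m-2}=2T_m$. The only cosmetic differences are that the paper introduces an auxiliary $Q_0:=(q+1)/q$ and checks base cases at $n=0,1$ rather than $n=1,2$, and for part~$(ii)$ the paper telescopes a weighted sum $\sum_{k=1}^n(q_{2k}-q_{2k-2})q^{k-n}$ in one shot rather than inducting on the first-order difference $L_n-L_{n-1}=Q_{2n}-qQ_{2n-2}$; the underlying computation is identical.
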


\begin{proof}
Let $Q_0(t):=\frac {q+1}q$. Then the recursion formula
\begin{eqnarray*}
Q_{n+1}(t)=t Q_n(t)-q Q_{n-1}(t)
\end{eqnarray*}
holds for all $n\ge 1$.
Letting $$q_n(t):=Q_n(2\sqrt{ q} t)q^{-n/2},\qquad n\ge 0,\,\, t\in\R,$$
we get
\begin{eqnarray*}
&&  q_0(t)=\frac {q+1}q\\
&& q_1(t)=2t\\
&&q_{n+1}(t)=2t q_n(t)-q_{n-1}(t), \qquad n\ge 1.
\end{eqnarray*}
The Chebyshev polynomials also satisfy an identical recursion formula:
\begin{eqnarray*}
&&T_{n+1}(t)=2t T_n(t)-T_{n-1}(t), \qquad n\ge 1,\\
&&U_{n+1}(t)=2t U_n(t)-U_{n-1}(t), \qquad n\ge 1.
\end{eqnarray*}
Hence, if we can choose constants $\alpha,\beta\in\R$ such that
\begin{eqnarray*}
q_0(t)=\alpha T_0(t)+\beta U_0(t)\qquad q_1(t)=\alpha T_1(t)+\beta U_1(t),
\end{eqnarray*}
then $q_n(t)=\alpha T_n(t)+\beta U_n(t)$ would hold for all $n\ge 0$.
Indeed,  $\alpha=2/q$ and $\beta=(q-1)/q$ is the only solution since $T_0(t)=1$, $T_1(t)=t$, $U_0(t)=1$, $U_1(t)=2t$.
This proves ($i)$ because
\begin{equation}\label{e:Qnrelatedtoqn}
  Q_n(t)=q_n(\frac t{2\sqrt{ q}})q^{n/2},\qquad n\ge 0,\, t\in\R.
\end{equation}
The proof of  $(ii)$ is an adaptation from the proof of formula (2.6) in \cite{HaagerupKnydby2013}.
From Eq.~(2) p. 184 in \cite{HTF2}, the Chebyshev polynomials satisfy
$$T_n(\cos\theta)=\cos n\theta, \qquad U_n(\cos\theta)=\frac{\sin(n+1)\theta}{\sin\theta}.$$
It follows that
$$U_n(t)-U_{n-2}(t)=2T_n(t),\quad n\ge2,$$
for $-1<t<1$ and hence for all $t\in\R$ because we are dealing with polynomials. Since $q_n(t)=\frac 2 q T_n(t)+\frac{q-1}q U_n(t)$, $t\in\R$, $n\ge0$, we get
$$q_n(t)-q_{n-2}(t)=2T_n(t)-\frac 2 q T_{n-2}(t),\qquad t\in\R,\,\, n\ge 2.$$
From this we get
$$\sum_{k=1}^n (q_{2k}(t)-q_{2k-2}(t))q^{k-n}=2T_{2n}(t)-\frac2{q^n}T_0(t)=2T_{2n}(t)-\frac 2{q^n}.$$
Simplifying,
$$q_{2n}(t)-(q-1)\sum_{k=1}^{n-1} \frac{q_{2k}(t)}{q^{n-k}}-\frac{q_0(t)}{q^{n-1}}=2T_{2n}(t)-\frac 2{q^n}.$$
Therefore
$$  q_{2n}(t)-(q-1)\sum_{k=1}^{n-1} \frac{q_{2k}(t)}{q^{n-k}}=2T_{2n}(t)+\frac {q-1}{q^n}.$$
The last equation together with Eq.~(\ref{e:Qnrelatedtoqn}) yields $(ii)$.
\end{proof}

\subsection{\underline{The group ring sequences $(h_n)$, $(k_n)$}}
  Let $\Gamma$ be a discrete group, let $Y\subset \Gamma$ be a finite set with $|Y|=q+1$ elements $(q\in\N)$, and let
$h=\sum_{s\in Y} s$ as in Section~\ref{s:three}.
  Define $Q_n$, $Q_n^{(1)}$, $Q_n^{(2)}$ as in Eqs.~(\ref{e:QnConstantq}) and (\ref{e:Q1nQ2n}) for this value of $q$.
  Define the sequences $(h_n)_{n\in\N}$, $(k_n)_{n\in\N}$ in the group ring $\C\Gamma$ of $\Gamma$ by
  \begin{eqnarray}\label{e:hn}
   &&E_n:=\{(s_1,\ldots,s_n)\in Y^n\mid  s_1\ne s_2\ne\ldots\ne s_n\}. \\
   \nonumber&&h_n:=\sum_{(s_1,\ldots,s_n)\in E_n} s_1(s_2^{-1}s_3\cdots s_{n-1}^{-1}s_n) \qquad (n \text{ odd})\\
   \nonumber&&h_n:=\sum_{(s_1,\ldots,s_n)\in E_n} s_1^{-1}s_2\cdots s_{n-1}^{-1}s_n \qquad (n \text{ even}).
  \end{eqnarray}
    \begin{eqnarray}\label{e:kn}
   &&k_n:=\sum_{(s_1,\ldots,s_n)\in E_n} s_1^{-1}(s_2s_3^{-1}\cdots s_{n-1}s_n^{-1}) \qquad (n \text{ odd})\\
   \nonumber&&k_n:=\sum_{(s_1,\ldots,s_n)\in E_n} s_1s_2^{-1}\cdots s_{n-1}s_n^{-1} \qquad (n \text{ even}).
  \end{eqnarray}
In all cases, $s_1\ne s_2\ne\cdots\ne s_n$ means $s_i\ne s_{i+1}$ $(1\le i\le n-1)$, and $Y^{-1}:=\{x^{-1}:x\in Y\}$.
Notice that $h_1=h$ and $k_1=h^*$. In fact $h_n^*=k_n$ for $n$ odd, and for $n$ even $h_n^*=h_n$ and $k_n^*=k_n$.

\begin{thm}\label{t:hnConnectedToQn}

Let $(h_n)_{n\in\N}$, $(k_n)_{n\in\N}$ be the sequences defined in Eqs.~(\ref{e:hn}) and (\ref{e:kn}). Then in the algebra $M_2(\C\Gamma)$ of $2\times2$ matrices  over $\C\Gamma$ we have
\begin{itemize}
  \item [($i$)]
  $\left(
                   \begin{array}{cc}
                     0 & k_n \\
                     h_n & 0 \\
                   \end{array}
                 \right)
                 =
                 Q_n\left(
                      \begin{array}{cc}
                        0 & h^* \\
                        h & 0 \\
                      \end{array}
                    \right)
  $ for  $n$ odd.
  \item [($ii$)]
  $\left(
                   \begin{array}{cc}
                     h_n & 0 \\
                     0 & k_n \\
                   \end{array}
                 \right)
                 =
                 Q_n\left(
                      \begin{array}{cc}
                        0 & h^* \\
                        h & 0 \\
                      \end{array}
                    \right)
  $ for  $n$ even.
\end{itemize}
These formulas can also be expressed in the group ring $\C(\Gamma)$ as
\begin{itemize}
  \item [($iii$)]  $h_{2m+1}=h Q_m^{(2)}(h^*h)$,\\
   $k_{2m+1}=h^*Q^{(2)}_m(hh^*)$ \qquad $m\ge0$.
  \item [($iv$)]   $h_{2m}=Q_m^{(1)}(h^*h)$,\\
  $k_{2m}=Q_m^{(1)}(hh^*)$ \qquad $m\ge 1$.
\end{itemize}
\end{thm}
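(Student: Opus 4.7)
The plan is to prove (i) and (ii) simultaneously by induction on $n$, using the recursion defining the polynomials $Q_n$ applied to the self-adjoint matrix $\tilde h = \bigl(\begin{smallmatrix} 0 & h^* \\ h & 0 \end{smallmatrix}\bigr) \in M_2(\C\Gamma)$. Statements (iii) and (iv) will then follow cheaply by combining (i)--(ii) with the factorizations $Q_{2m}(t) = Q_m^{(1)}(t^2)$ and $Q_{2m+1}(t) = t Q_m^{(2)}(t^2)$ together with the identity $\tilde h^{\,2} = \bigl(\begin{smallmatrix} h^*h & 0 \\ 0 & hh^* \end{smallmatrix}\bigr)$.

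\textbf{Base cases.} For $n=1$, $Q_1(\tilde h) = \tilde h$, and by definition $h_1 = h$, $k_1 = h^*$, so (i) holds. For $n=2$, $Q_2(\tilde h) = \tilde h^{\,2} - (q+1)I$; a direct expansion gives $h^*h = \sum_{s_1,s_2 \in Y} s_1^{-1}s_2 = (q+1) + h_2$, since the $q+1$ diagonal terms $s_1 = s_2$ contribute $(q+1)\cdot e$ and the off-diagonal terms reproduce the sum defining $h_2$. Similarly $hh^* = (q+1) + k_2$, yielding (ii) for $n=2$.

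\textbf{Inductive step.} Assuming (i), (ii) hold for $n-1$ and $n$ with $n \ge 2$, apply the recursion $Q_{n+1}(\tilde h) = \tilde h\, Q_n(\tilde h) - q\, Q_{n-1}(\tilde h)$ and multiply the block matrices out. Depending on the parity of $n$, matching entries reduces the step to verifying the four group-ring identities
\begin{align*}
h_{n+1} &= h\,h_n - q\,h_{n-1}, & k_{n+1} &= h^*k_n - q\,k_{n-1} \qquad (n \text{ even}),\\
h_{n+1} &= h^*h_n - q\,h_{n-1}, & k_{n+1} &= h\,k_n - q\,k_{n-1} \qquad (n \text{ odd}).
\end{align*}
These are the heart of the proof. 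To prove, say, the first identity for $n$ even, expand $h\,h_n = \sum_{s_0 \in Y,\,(s_1,\ldots,s_n) \in E_n} s_0 s_1^{-1} s_2 \cdots s_{n-1}^{-1} s_n$ and split according to whether $s_0 \ne s_1$ or $s_0 = s_1$. The first alternative yields precisely $h_{n+1}$ by definition (since $n+1$ is odd). In the second alternative, the prefix $s_0 s_1^{-1}$ collapses to $e$, leaving the word $s_2 s_3^{-1}\cdots s_{n-1}^{-1} s_n$ of length $n-1$ (odd); for each choice of $(s_2,\ldots,s_n) \in E_{n-1}$, the element $s_1$ is free to be any of the $q$ elements of $Y$ distinct from $s_2$, producing exactly $q\,h_{n-1}$ after relabelling. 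The three remaining identities are proved by the same argument, swapping the roles of $h$ and $h^*$ according to the parity of the leading exponent in the definitions of $h_n$ and $k_n$.

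\textbf{From (i)--(ii) to (iii)--(iv).} Once (i) and (ii) are established, use $\tilde h^{\,2} = \bigl(\begin{smallmatrix} h^*h & 0 \\ 0 & hh^* \end{smallmatrix}\bigr)$ to compute $Q_m^{(1)}(\tilde h^{\,2}) = \bigl(\begin{smallmatrix} Q_m^{(1)}(h^*h) & 0 \\ 0 & Q_m^{(1)}(hh^*) \end{smallmatrix}\bigr)$, so that $Q_{2m}(\tilde h) = Q_m^{(1)}(\tilde h^{\,2})$ and comparison with (ii) gives (iv). Multiplying this block-diagonal matrix on the left by $\tilde h$ produces $Q_{2m+1}(\tilde h)$ and, matched against (i), yields (iii).

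\textbf{Main obstacle.} The nontrivial content is the combinatorial four-term recursion for $(h_n)$ and $(k_n)$; keeping track of the parity of $n$ (which determines whether the word starts with $s_1$ or $s_1^{-1}$, and hence whether one multiplies by $h$ or $h^*$) and the fact that the recursion $Q_{n+1} = tQ_n - qQ_{n-1}$ only starts at $n \ge 2$ — so the passage $Q_1 \mapsto Q_2$ must be treated as a separate base case using the definition $Q_2(t) = t^2 - (q+1)$ — is where care is required.
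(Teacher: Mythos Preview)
Your proof is correct and follows essentially the same approach as the paper's: the same base cases $n=1,2$, the same four group-ring recursions for $h_{n+1}$ and $k_{n+1}$ (the paper derives them by expanding $h_{n+1}$ and adding/subtracting the $s_0=s_1$ terms, whereas you expand $h\,h_n$ and split on $s_0\ne s_1$ versus $s_0=s_1$, but this is the same computation read in the opposite direction), and the same passage from (i)--(ii) to (iii)--(iv) via $\tilde h^{\,2}=\bigl(\begin{smallmatrix} h^*h & 0\\ 0 & hh^*\end{smallmatrix}\bigr)$. One small slip in your final sentence: to obtain (iii) you must multiply $\tilde h$ by the block-diagonal matrix $Q_m^{(2)}(\tilde h^{\,2})$, not by $Q_m^{(1)}(\tilde h^{\,2})$, since $Q_{2m+1}(t)=t\,Q_m^{(2)}(t^2)$; the argument is otherwise unchanged.
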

\begin{proof}
  We start by proving $(i)$ and $(ii)$ by induction in $n\in\N$.
  Note first that $(i)$ holds for $n=1$  and that $(ii)$ holds for $n=2$ because
  $$ h_2=(\sum_{s_1\in Y}s_1^{-1})(\sum_{s_2\in Y} s_2)-|Y|e=h^*h-(q+1)e,$$
  and similarly $k_2=hh^*-(q+1)e$.
  We next prove the following 4 recursion formulas.
  \begin{equation}\label{e:hnplus1knplus1Even}
    \begin{array}{ll}
      h_{n+1}=h h_n-q h_{n-1}, & \hbox{$n\ge 2$, $ n$  even} \\
      k_{n+1}=h^*k_n-qk_{n-1}, & \hbox{$n\ge 2$, $n$  even.}
    \end{array}
  \end{equation}
    \begin{equation}\label{e:hnplus1knplus1Odd}
    \begin{array}{ll}
      h_{n+1}=h^* h_n-q h_{n-1}, & \hbox{$n\ge 3$, $ n$  odd} \\
      k_{n+1}=h k_n-qk_{n-1}, & \hbox{$n\ge 3$, $n$  odd.}
    \end{array}
  \end{equation}
  To prove the first formula in Eq.~(\ref{e:hnplus1knplus1Even}) note that $n+1$ is odd. Hence
  \begin{eqnarray*}
 h_{n+1}&=&\sum_{(s_0,\ldots,s_n)\in E_{n+1}} s_0(s_1^{-1}s_2\ldots s_{n-1}^{-1}s_n)\\
    &=&\sum_{(s_1,\ldots,s_n)\in E_{n}}\sum_{s_0\in Y\backslash\{s_1\}} s_0(s_1^{-1}s_2\ldots s_{n-1}^{-1}s_n)\\
    &=&\sum_{(s_1,\ldots,s_n)\in E_{n}}\big(\sum_{s_0\in Y} s_0(s_1^{-1}s_2\ldots s_{n-1}^{-1}s_n)- s_1(s_1^{-1}s_2\ldots s_{n-1}^{-1}s_n)\big)\\
    &=&\big(\sum_{s_0\in Y} s_0 \big)\big(\sum_{(s_1,\ldots,s_n)\in E_n} s_1^{-1}s_2\ldots s_{n-1}^{-1}s_n\big)-\\
    &&\qquad \sum_{(s_2,\ldots,s_n)\in E_{n-1}}\big( \sum_{s_1\in Y\backslash\{s_2\}} s_1s_1^{-1}\big)(s_2\cdots s_{n-1}^{-1}s_n)\\
    &=&hh_n-qh_{n-1}.
  \end{eqnarray*}
The second formula in Eq.~(\ref{e:hnplus1knplus1Even}) follows from this by replacing $Y$ with $Y^{-1}$.
The two formulas in Eq.~(\ref{e:hnplus1knplus1Odd}) can be proven in exactly the same way noticing that in this case $n+1$ is even.
The formulas in Eqs.~(\ref{e:hnplus1knplus1Even}) and (\ref{e:hnplus1knplus1Odd}) can be rewritten as
\begin{eqnarray*}
\left(
  \begin{array}{cc}
    0 & k_{n+1} \\
    h_{n+1} & 0 \\
  \end{array}
\right)=
\left(
  \begin{array}{cc}
    0 & h^* \\
    h & 0 \\
  \end{array}
\right)
\left(
  \begin{array}{cc}
    h_n & 0 \\
    0 & k_n \\
  \end{array}
\right)
-q
\left(
  \begin{array}{cc}
    0 & k_{n-1} \\
    h_{n-1} & 0 \\
  \end{array}
\right)
\end{eqnarray*}
for $n\ge 2$, $n$ even; and
\begin{eqnarray*}
\left(
  \begin{array}{cc}
    h_{n+1} & 0 \\
    0 & k_{n+1} \\
  \end{array}
\right)=
\left(
  \begin{array}{cc}
    0 & h^* \\
    h & 0 \\
  \end{array}
\right)
\left(
  \begin{array}{cc}
    0 & k_n \\
    h_n & 0 \\
  \end{array}
\right)
-q
\left(
  \begin{array}{cc}
    h_{n-1} & 0 \\
    0 & k_{n-1} \\
  \end{array}
\right)
\end{eqnarray*}
for $n\ge 3$, $n$ odd.
By induction in $n$ we get
$$Q_{n+1}(t)=tQ_n(t)-qQ_{n-1}(t),\qquad n\ge 2,$$
once we rewrite the above two matrix equations in terms of $(i)$ and $(ii)$,
which is precisely the definition of $Q_n$ given in Formula~$(\ref{e:QnConstantq})$.
Hence $(i)$ and $(ii)$ hold.
Since
$$\left(
    \begin{array}{cc}
      0 & h^* \\
      h & 0 \\
    \end{array}
  \right)^2=
  \left(
    \begin{array}{cc}
      h^*h & 0 \\
      0 & hh^* \\
    \end{array}
  \right),$$
we have by Eq.~(\ref{e:Q1nQ2n})
\begin{eqnarray*}
\left(
    \begin{array}{cc}
      0 & k_{2m+1} \\
      h_{2m+1} & 0 \\
    \end{array}
  \right)
  &=&
  Q_{2m+1}\left(
    \begin{array}{cc}
      0 & h^* \\
      h & 0 \\
    \end{array}
  \right)\\
  &=&
  \left(
    \begin{array}{cc}
      0 & h^* \\
      h & 0 \\
    \end{array}
  \right) Q_m^{(2)}\left(
    \begin{array}{cc}
      h^*h & 0 \\
      0 & hh^* \\
    \end{array}
  \right)\\
  &=&
    \left(
    \begin{array}{cc}
      0 &  h^*Q_m^{(2)}(hh^*) \\
       h Q_m^{(2)}(h^*h) & 0 \\
    \end{array}
  \right)
\end{eqnarray*}
   for $m\ge 0$, proving $(iii)$. Similarly for $m\ge 1$, we have
\begin{eqnarray*}
  \left(
    \begin{array}{cc}
      h_{2m} & 0 \\
      0 & k_{2m} \\
    \end{array}
  \right)
  =
  Q_{2m}\left(
    \begin{array}{cc}
      0 & h^* \\
      h & 0 \\
    \end{array}
  \right)
  =
  \left(
    \begin{array}{cc}
      Q_m^{(1)}(h^*h) & 0 \\
      0 & Q_m^{(1)}(hh^*) \\
    \end{array}
  \right)
\end{eqnarray*}
proving $(iv)$.
\end{proof}

\begin{rem}

  If $Y=Y^{-1}$ then $h=h^*$ and $h_n=k_n=Q_n(h)$ for all $n\in\N$.
\end{rem}

\begin{pro}\label{p:hnstarhneqh2nplus}

  For all $n\in\N$ we have
  \begin{equation}
    h_n^*h_n=h_{2n}+(q+1)q^{n-1}e+(q-1)\sum_{i=1}^{n-1}q^{i-1} h_{2n-2i}.
  \end{equation}
\end{pro}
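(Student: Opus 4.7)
The plan is to prove the claim by first establishing a polynomial identity in $t$ and then applying it to the self-adjoint element
$$H := \begin{pmatrix} 0 & h^* \\ h & 0 \end{pmatrix} \in M_2(\C\Gamma)$$
via Theorem~\ref{t:hnConnectedToQn}. The target identity is
$$ Q_n(t)^2 = Q_{2n}(t) + (q+1)q^{n-1} + (q-1)\sum_{i=1}^{n-1} q^{i-1} Q_{2n-2i}(t), \qquad (\star)$$
which is a pure statement about the polynomials $Q_n$ with constant $q$.

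Given $(\star)$, the rest is a short matrix calculation. Since $H$ is self-adjoint and $Q_n$ has real coefficients, $Q_n(H)^2 = Q_n(H)^*Q_n(H)$. By Theorem~\ref{t:hnConnectedToQn}(i)--(ii), the block matrix $Q_n(H)$ is antidiagonal for $n$ odd and diagonal for $n$ even; in either case, using that $h_n^* = k_n$ for $n$ odd and $h_n^* = h_n$ for $n$ even, a direct block computation gives
$$Q_n(H)^2 = \begin{pmatrix} h_n^* h_n & 0 \\ 0 & h_n h_n^* \end{pmatrix}.$$
Meanwhile, for each $m$ with $1 \le m \le n$, Theorem~\ref{t:hnConnectedToQn}(ii) tells us that $Q_{2m}(H)$ is diagonal with upper-left entry $h_{2m}$. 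Applying $(\star)$ to $H$ and reading off the upper-left entry therefore yields the claim.

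To prove $(\star)$ I would pass to the Chebyshev representation of Proposition~\ref{p:QnRelatedToChebyshevPolys}(i). Setting $u = t/(2\sqrt q)$ and expanding both sides in terms of $T_j(u)$ and $U_j(u)$ via the standard identities
$$2T_n^2 = T_{2n}+1, \qquad 2T_n U_n = U_{2n}+1, \qquad U_n^2 = \sum_{k=0}^n U_{2k},$$
the contribution $(q-1)^2 q^{n-2}\sum_{k=1}^{n-1} U_{2k}(u)$ appears with the same coefficient on both sides and cancels. After collecting the remaining terms, $(\star)$ reduces to the Dirichlet-kernel identity
$$U_{2n}(u) = 1 + 2\sum_{k=1}^n T_{2k}(u),$$
which is immediate from $\sum_{j=-n}^n e^{2ij\theta} = \sin((2n+1)\theta)/\sin\theta$.

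The main obstacle is the bookkeeping in the Chebyshev step: the three product formulas for $T_n^2$, $T_n U_n$ and $U_n^2$ have slightly different index ranges, and one must track the cancellations between the $T$-part and the $U$-part carefully. An alternative route would be a direct induction on $n$ using the three-term recursion $Q_{n+1} = tQ_n - qQ_{n-1}$, but this couples $(\star)$ with a companion identity for $Q_n(t)Q_{n-1}(t)$ and roughly doubles the work; the Chebyshev route is shorter and more transparent.
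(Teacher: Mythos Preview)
Your approach is correct and genuinely different from the paper's. The paper proves the identity by a direct combinatorial decomposition: it expands $h_n^*h_n$ as a double sum over pairs $(t_1,\ldots,t_n),(s_1,\ldots,s_n)\in E_n$ and partitions the sum according to the length $k$ of the longest common initial segment $(s_1,\ldots,s_k)=(t_1,\ldots,t_k)$. The piece $k=0$ gives $h_{2n}$, the piece $k=n$ gives $|E_n|e=(q+1)q^{n-1}e$, and for $1\le k\le n-1$ a counting argument (first choose $s_k=t_k\in Y\setminus\{s_{k+1},t_{k+1}\}$ in $q-1$ ways, then $s_{k-1},\ldots,s_1$ in $q$ ways each) yields $(q-1)q^{k-1}h_{2n-2k}$.

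You instead reduce everything to a polynomial identity for the $Q_n$'s and then invoke Theorem~\ref{t:hnConnectedToQn}. This works: expanding both sides of $(\star)$ via Proposition~\ref{p:QnRelatedToChebyshevPolys}(i), the terms $(q-1)^2 q^{n-2}\sum_{j=1}^{n-1}U_{2j}$ do cancel, and what remains is $(q-1)$ times the Dirichlet identity $U_{2n}=1+2\sum_{j=1}^n T_{2j}$, exactly as you say. The matrix step reading off the upper-left block of $Q_n(H)^2$ is also correct in both parities. The paper's route is more elementary and self-contained (no Chebyshev machinery), and it never needs Theorem~\ref{t:hnConnectedToQn}; your route is more algebraic and avoids the parity case-splitting of the combinatorial argument, at the cost of a careful Chebyshev bookkeeping. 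One small remark: your reduction divides by $q-1$, so for $q=1$ you should note separately that both sides of the residual equation vanish trivially (or argue that $(\star)$ is a polynomial identity in $q$).
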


\begin{proof}
  For $n=1$ we have from the proof of Theorem~\ref{t:hnConnectedToQn} that
  $h^*h=h_2+(q+1)e$.
  Consider now $n\ge 2$. If $n$ is odd, then
  \begin{eqnarray*}
    h_n^*h_n&=&\sum_{\substack{(t_1,\ldots,t_n)\in E_n\\ (s_1,\ldots,s_n)\in E_n}} (t_n^{-1}t_{n-1}\cdots t_3^{-1}t_2)t_1^{-1}s_1(s_2^{-1}s_3\cdots s_{n-1}^{-1}s_n)=\sum_{k=0}^n a_k,
  \end{eqnarray*}
  where $a_k$ is obtained by summing over only those $(s_1,\ldots,s_n),(t_1,\ldots,t_n)\in E_n$ for which
$(s_1,\ldots,s_k)=(t_1,\ldots,t_k)$ and $s_{k+1}\ne t_{k+1}$
when $0\le k\le n-1$, and $(s_1,\ldots,s_n)=(t_1,\ldots,t_n)$ when $k=n$.
If $k=0$ then $s_1\ne t_1$ and thus $a_0=h_{2n}$.
If $k=n$ then $(t_1,\ldots,t_{n})=(s_1,\ldots,s_{n})$ and thus $a_n=|E_n|e=(q+1)q^{n-1}e$.
For  $1\le k\le n-1$, $k$ odd, we have
  \begin{eqnarray*}
a_k&=& \sum_{\substack{(t_{k+1},\ldots,t_n)\in E_{n-k}\\(s_{k+1},\ldots,s_n)\in E_{n-k}\\ t_{k+1}\ne s_{k+1}}} \sum_{\substack{(t_1,\ldots,t_k)\in E_k\\(s_1,\ldots,s_k)\in E_k\\ (t_1,\ldots,t_k)=(s_1,\ldots,s_k)\\ t_{k}\ne t_{k+1}\\ s_{k}\ne s_{k+1}}}(t_n^{-1}t_{n-1}\cdots t_{k+2}^{-1}t_{k+1})(s_{k+1}^{-1}s_{k+2}\cdots s_{n-1}^{-1}s_n)\\
&=& (q-1) q^{k-1}h_{2n-2k}
  \end{eqnarray*}
because for fixed $(t_{k+1},\ldots,t_n)$, $(s_{k+1},\ldots,s_n)\in E_{n-k}$ with $t_{k+1}\ne s_{k+1}$, $(s_1,\ldots,s_k)=(t_1,\ldots, t_k)$ can be chosen in $(q-1)q^{k-1}$ ways, namely $s_k=t_k\in Y\backslash\{s_{k+1},t_{k+1}\}$ can first be chosen in $|Y|-2=q-1$ ways, and next $s_{k-1}=t_{k-1}$, $s_{k-2}=t_{k-2}$ etc. can each be chosen in $q=|Y|-1$ ways.
The same holds for $k$ even and/or $n$ even by obvious modifications of the above proof.
\end{proof}

\begin{pro}

  We have
  \begin{equation}\label{e:norm2hnEQnorm2kn}
    ||h_n||_2=||k_n||_2,\qquad n\in\N
  \end{equation}
\end{pro}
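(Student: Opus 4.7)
The strategy is to split into the cases $n$ odd and $n$ even, and in each case reduce the equality $\|h_n\|_2 = \|k_n\|_2$ to the trace identity $\tau(p(h^*h)) = \tau(p(hh^*))$, valid for any polynomial $p \in \C[t]$. This identity is immediate from traciality: for $k \geq 1$ one has
\[
\tau((h^*h)^k) = \tau(h^*\cdot h h^* h \cdots h^* h) = \tau(h h^* h \cdots h^* h \cdot h^*) = \tau((hh^*)^k),
\]
by cycling the leftmost $h^*$ to the right, and the case $k = 0$ is trivial. Linearity in the coefficients of $p$ then gives the claim.

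For $n$ odd, the cleanest route is to observe that $h_n^* = k_n$. Indeed, taking adjoints of a typical summand $s_1(s_2^{-1}s_3\cdots s_{n-1}^{-1}s_n)$ of $h_n$ gives $s_n^{-1}s_{n-1}\cdots s_3^{-1}s_2 s_1^{-1}$, and after the index substitution $t_i := s_{n+1-i}$ (which is a bijection of $E_n$ onto itself, since $E_n$ is invariant under the order reversal $s_1 \neq s_2 \neq \cdots \neq s_n$) this becomes exactly a typical summand $t_1^{-1}(t_2 t_3^{-1}\cdots t_{n-1}t_n^{-1})$ of $k_n$. Consequently
\[
\|h_n\|_2^2 = \tau(h_n^* h_n) = \tau(k_n h_n) = \tau(h_n k_n) = \tau(k_n^* k_n) = \|k_n\|_2^2,
\]
using $k_n^* = h_n$ (by the analogous argument) and the trace property in the middle step.

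For $n = 2m$ even, Theorem~\ref{t:hnConnectedToQn}$(iv)$ gives $h_{2m} = Q_m^{(1)}(h^*h)$ and $k_{2m} = Q_m^{(1)}(hh^*)$, and both operators are self-adjoint. Setting $p(t) := (Q_m^{(1)}(t))^2 \in \C[t]$, we obtain
\[
\|h_{2m}\|_2^2 = \tau\bigl(Q_m^{(1)}(h^*h)^2\bigr) = \tau(p(h^*h)),\qquad \|k_{2m}\|_2^2 = \tau(p(hh^*)),
\]
and the trace identity from the first paragraph yields $\|h_{2m}\|_2 = \|k_{2m}\|_2$.

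There is no substantial obstacle here: the work has already been done in Theorem~\ref{t:hnConnectedToQn}, and the remaining content is the standard fact that $h^*h$ and $hh^*$ share all $\tau$-moments. (Alternatively, one could give a unified proof via part $(i)$--$(ii)$ of Theorem~\ref{t:hnConnectedToQn}, computing $\|h_n\|_2^2 + \|k_n\|_2^2 = 2\tilde{\tau}\!\left(Q_n(\tilde h)^* Q_n(\tilde h)\right)$ for the self-adjoint operator $\tilde h = \bigl(\begin{smallmatrix}0 & h^* \\ h & 0\end{smallmatrix}\bigr)$ and then showing each summand equals the other using the symmetry $\tilde \tau$, but the case split above is more elementary and self-contained.)
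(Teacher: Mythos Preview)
Your proof is correct and follows essentially the same approach as the paper: for $n$ odd, use $h_n^* = k_n$ (which the paper states just before Theorem~\ref{t:hnConnectedToQn} and you verify directly from the definitions) together with the traciality of $\tau$; for $n = 2m$ even, use Theorem~\ref{t:hnConnectedToQn}$(iv)$ and the identity $\tau((h^*h)^j) = \tau((hh^*)^j)$. The only cosmetic difference is that in the odd case the paper writes the shorter chain $\|h_n\|_2^2 = \tau(h_n^*h_n) = \tau(h_n h_n^*) = \tau(k_n^* k_n) = \|k_n\|_2^2$, whereas your intermediate step $\tau(k_n h_n) = \tau(h_n k_n)$ is an unnecessary detour.
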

\begin{proof}
  For $n$ odd,  $h_n^*=k_n$ by Theorem~\ref{t:hnConnectedToQn}. Thus $||h_n||_2=\tau(h_n^*h_n)^{1/2}=\tau(h_nh_n^*)^{1/2}=||k_n||_2$.
  For $n$ even, $n=2m$,  
Theorem~\ref{t:hnConnectedToQn} yields
  $||h_n||_2^2=\tau((Q_m^{(1)}(h^*h))^2)=\tau((Q_m^{(1)}(hh^*))^2)=||k_n||_2^2$ as $\tau((h^*h)^j)=\tau((h^*h)^{j-1}(h^*h))=\tau(h(h^*h)^{j-1}h^*)=\tau((hh^*)^j)$ for any $j\in\N$.
\end{proof}

\subsection{\underline{The integer sequences $\xi_n$, $\eta_n$, $\zeta_n$}}
Define the group ring sequence $(z_n)_{n\in\N}$ by
\begin{equation}\label{e:zn}
z_n:=\sum_{\substack{(s_1,\ldots,s_{2n})\in \tilde E_{2n}}}  s_1^{-1}s_2\cdots s_{2n-1}^{-1} s_{2n},
\end{equation}
where $\tilde E_{k}:=\{(s_1,\ldots,s_k)\in E_k\mid s_1\ne s_k\}$. We could say that $z_n$ is the cyclic version of $h_{2n}$ defined in Eq.~(\ref{e:hn}). Recall that $E_n\subset Y^n$, where $Y\subset \Gamma$ is a finite set  with $|Y|=q+1$ elements. We assume $q\ge 1$.
Define the number sequences $(\xi_n)_{n\in\N}$, $(\eta_n)_{n\in\N}$, $(\zeta_n)_{n\in\N}$ by
\begin{eqnarray}\label{e:xirhozetaN}
  &&\xi_n:=||h_n||_2^2-(q+1)q^{n-1}\\
\nonumber  &&\eta_n:=\tau(h_{2n})\\
\nonumber  &&\zeta_n:=\tau(z_n).
\end{eqnarray}

\begin{pro}\label{p:xietazetabounded}

For $n\in\N$, $\xi_n$, $\eta_n$, $\zeta_n$ are integers and
\begin{equation}
  0\le \zeta_n\le \eta_n\le \xi_n\le 4 q^{2n}.
\end{equation}
\end{pro}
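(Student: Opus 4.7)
The plan is in three parts, one for each of the three inequalities.

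First I would verify the integrality and non-negativity of $\eta_n$ and $\zeta_n$. By their defining formulas $(\ref{e:hn})$ and $(\ref{e:zn})$, the elements $h_{2n}$ and $z_n$ are sums of group elements indexed by the finite sets $E_{2n}$ and $\tilde E_{2n}\subseteq E_{2n}$ respectively, so they lie in the integral group ring $\Z\Gamma$. Since $\tau$ picks out the coefficient of $e$, the numbers $\eta_n=\tau(h_{2n})$ and $\zeta_n=\tau(z_n)$ are non-negative integers, counting the tuples whose alternating product equals $e$. The inclusion $\tilde E_{2n}\subseteq E_{2n}$ yields $0\le \zeta_n\le \eta_n$.

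Next, for the integrality of $\xi_n$ and the inequality $\eta_n\le \xi_n$, I would apply $\tau$ to the identity in Proposition~\ref{p:hnstarhneqh2nplus}. Using $\tau(e)=1$ and $\tau(h_{2k})=\eta_k$, this gives
$$\xi_n \;=\; ||h_n||_2^2-(q+1)q^{n-1}\;=\;\eta_n+(q-1)\sum_{i=1}^{n-1}q^{i-1}\eta_{n-i}.$$
Because $q\ge 1$ and each $\eta_j$ is a non-negative integer, the right-hand side is a non-negative integer at least as large as $\eta_n$, so $\xi_n\in\N_0$ and $\xi_n\ge \eta_n$.

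For the upper bound $\xi_n\le 4q^{2n}$, I would use the trivial sum-of-coefficients estimate on $||h_n||_2^2$. Writing $h_n=\sum_{x\in Y_n}c_x^{(n)}x$ with $c_x^{(n)}\in\N$ and $\sum_{x\in Y_n} c_x^{(n)}=|E_n|=(q+1)q^{n-1}$, we have
$$||h_n||_2^2 \;=\; \sum_{x\in Y_n}(c_x^{(n)})^2\;\le\; \Bigl(\sum_{x\in Y_n}c_x^{(n)}\Bigr)^{\!2}\;=\;(q+1)^2 q^{2n-2},$$
since all coefficients are non-negative. The elementary inequality $(q+1)^2\le 4q^2$, valid for $q\ge 1$, then gives $||h_n||_2^2\le 4q^{2n}$, and combining with $\xi_n\le ||h_n||_2^2$ yields $\xi_n\le 4q^{2n}$.

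No step in this argument is a real obstacle: everything reduces to interpreting $\eta_n,\zeta_n$ combinatorially, taking the trace of Proposition~\ref{p:hnstarhneqh2nplus}, and applying a crude sum-of-coefficients bound. The constant $4$ is sharp at $q=1$, corresponding to the fact that $\max_{q\ge 1}\bigl((q+1)/q\bigr)^{2}=4$.
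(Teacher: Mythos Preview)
Your proof is correct and follows essentially the same approach as the paper: the paper also obtains $0\le\zeta_n\le\eta_n$ from $\tilde E_{2n}\subseteq E_{2n}$ and $\tau(x)\in\{0,1\}$, derives $\eta_n\le\xi_n$ by applying $\tau$ to Proposition~\ref{p:hnstarhneqh2nplus}, and bounds $\xi_n\le||h_n||_2^2\le|E_n|^2=\bigl(\tfrac{q+1}{q}\bigr)^2 q^{2n}\le 4q^{2n}$. Your phrasing of the last step via $\sum_x (c_x^{(n)})^2\le\bigl(\sum_x c_x^{(n)}\bigr)^2$ is just a repackaging of the paper's $\tau(h_n^*h_n)\le\sum_{s,t\in E_n}1$.
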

\begin{proof}
  The numbers $\xi_n$, $\eta_n$, $\zeta_n$ are integers because $h_n\in\Z\Gamma$ and $\tau(x)\in\{0,1\}$ for all $x\in \Gamma$.
  Since $\tau(x)\ge0$ for all $x\in\Gamma$, and $h_n$ is a sum on $E_{2n}$ while $z_n$ is the same sum but on a subset of $E_{2n}$, we have $\eta_n\ge \zeta_n\ge 0$. By Proposition~\ref{p:hnstarhneqh2nplus}
$$ \xi_n=\tau(h_n^*h_n)-(q+1)q^{n-1}=\eta_n+(q-1)\sum_{i=1}^{n-1}q^{i-1} \eta_{n-i}\ge \eta_n.$$
Finally,
$$\xi_n\le ||h_n||^2=\tau(h_n^*h_n)\le \sum_{s,t\in E_n} 1=|E_n|^2=((q+1)q^{n-1})^2=\left(\frac{q+1}{q}\right)^2q^{2n}\le 4 q^{2n}.$$
\end{proof}

\begin{pro}\label{p:6.7page29A}

  $$\tau(h_{2n})=\tau(z_n)+(q-1)\sum_{k=1}^{n-1}q^{k-1}\tau(z_{n-k})$$
\end{pro}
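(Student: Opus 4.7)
The plan is to partition the tuples $(s_1,\ldots,s_{2n}) \in E_{2n}$ that contribute to $\tau(h_{2n})$ according to an integer $k \in \{0,1,\ldots,n-1\}$, the largest integer for which $s_i = s_{2n+1-i}$ for $i=1,\ldots,k$. The $E_{2n}$ condition $s_n \ne s_{n+1}$ forces $k \le n-1$. When $k=0$ the tuple already satisfies $s_1 \ne s_{2n}$ and lies in $\tilde E_{2n}$, so these tuples contribute exactly $\tau(z_n)$ to the sum.

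For $k \ge 1$, introduce the middle tuple $(t_1,\ldots,t_{2(n-k)}) := (s_{k+1},\ldots,s_{2n-k})$. By the maximality of $k$ (trivial in the edge case $k=n-1$, since $\tilde E_2 = E_2$), the middle lies in $\tilde E_{2(n-k)}$. A $k$-fold application of the identity ``$g^{-1} M g = e \Leftrightarrow M = e$'' then shows that $s_1^{-1} s_2 \cdots s_{2n-1}^{-1} s_{2n} = e$ is equivalent to the middle product $M_k = e$, where $M_k$ has pattern (A), $t_1^{-1} t_2 \cdots t_{2(n-k)-1}^{-1} t_{2(n-k)}$, when $k$ is even, and pattern (B), $t_1 t_2^{-1} \cdots t_{2(n-k)-1} t_{2(n-k)}^{-1}$, when $k$ is odd. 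For each fixed middle tuple, the remaining $E_{2n}$ constraints on the outer entries reduce to $s_k \notin \{t_1, t_{2(n-k)}\}$ together with $s_{i-1} \ne s_i$ for $i=2,\ldots,k$ (the right-hand entries being determined by the matchings, and the ``reflected'' distinctness conditions being automatic); since $t_1 \ne t_{2(n-k)}$, this gives $(q-1) q^{k-1}$ extensions.

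The final step is to show that the number of middle tuples with $M_k = e$ is $\tau(z_{n-k})$ in both parities. For $k$ even this is immediate from the definition of $z_{n-k}$. For $k$ odd, write $\tau(z_m^{(B)})$ for the number of $(t_1,\ldots,t_{2m}) \in \tilde E_{2m}$ with pattern-(B) product equal to $e$. The cyclic rotation $(t_1,\ldots,t_{2m}) \mapsto (t_2,\ldots,t_{2m},t_1)$ preserves $\tilde E_{2m}$ (by its cyclic property noted after (\ref{e:tildeEkcomp})), and the algebraic equivalence $t_1^{-1} t_2 \cdots t_{2m-1}^{-1} t_{2m} = e \Leftrightarrow t_2 t_3^{-1} \cdots t_{2m} t_1^{-1} = e$ gives a bijection between the two sets, hence $\tau(z_m^{(B)}) = \tau(z_m)$. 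Summing the contributions over $k = 0,1,\ldots,n-1$ yields the claimed identity. The main obstacle is the parity bookkeeping of the middle pattern, which is resolved cleanly by the cyclic rotation bijection; the remaining counting is analogous in spirit to the proof of Proposition~\ref{p:hnstarhneqh2nplus}.
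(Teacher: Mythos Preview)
Your proof is correct and follows essentially the same approach as the paper's: both decompose $E_{2n}$ according to the maximal matching depth $k$ (the largest $k$ with $s_i=s_{2n+1-i}$ for $i\le k$), count the $(q-1)q^{k-1}$ choices of outer letters, and identify the middle contribution with $\tau(z_{n-k})$. The only cosmetic differences are that the paper invokes the trace property $\tau(gMg^{-1})=\tau(M)$ to strip the conjugation in one step (writing $b_k$ as a conjugate of the middle word), whereas you peel off one layer at a time via $g^{-1}Mg=e\Leftrightarrow M=e$; and the paper treats the parity issue with the phrase ``by an obvious modification of the above proof,'' while you spell it out via the cyclic rotation bijection on $\tilde E_{2m}$.
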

\begin{proof}
  We have
$$h_{2n}=\sum_{(s_1,\ldots,s_{2n})\in E_{2n}} s_{2n}^{-1}s_{2n-1}\cdots s_2^{-1}s_1=\sum_{k=0}^n b_k$$
where
$$b_k=\sum_{(s_1,\ldots,s_{2n})\in E_{2n}^{(k)}} s_{2n}^{-1}s_{2n-1}\cdots s_2^{-1}s_1$$
and where for $1\le k\le n-1$, $E_{2n}^{(k)}$ denotes the subset of $(s_1,\ldots,s_{2n})\in E_{2n}$ for which
$$s_1=s_{2n}, s_2=s_{2n-1},\ldots,s_k=s_{2n-k+1},s_{k+1}\ne s_{2n-k}$$
and for $k\in \{0,n\}$:
$$E_{2n}^{(0)}=\{(s_1,\ldots,s_{2n})\in E_{2n}\mid s_1\ne s_{2n}\}=\tilde E_{2n},$$
$$E_{2n}^{(n)}=\{(s_1,\ldots,s_{2n})\in E_{2n}\mid s_i=s_{2n+1-i},\,\, i=1,\ldots,n\}.$$
Clearly $b_0=z_n$. Moreover $b_n=0$ because $E_{2n}^{(n)}=\emptyset$ as $s_n\ne s_{n+1}$. For $1\le k\le n-1$ ($k$ odd) we can write
$$b_k=\sum u_1^{-1}u_2\cdots u_k^{-1}(s_{k+1}s_{k+2}^{-1}\cdots s_{2n-k})u_k\cdots u_2^{-1}u_1$$
where the summation is over all
$(s_{k+1},s_{k+2},\ldots, s_{2n-k})\in \tilde E_{2n-2k}$ and $(u_1,u_2,\ldots,u_k)\in E_k$ for which $u_k\not\in\{s_{k+1},s_{2n-k}\}.$

For fixed $(s_{k+1},s_{k+2},\ldots, s_{2n-k})$ there are exactly $(q-1)q^{k-1}$ choices of $(u_1,\ldots, u_k)$, namely first $u_k$ can be chosen in $|Y|-2=q-1$ ways because $s_{k+1}\ne s_{2n-k}$ and next each of $u_{k-1}$, $u_{k-2}$, $\ldots$, $u_1$ can be chosen in $|Y|-1=q$ ways.
Since
$$\tau(u_1^{-1}u_2\ldots u_k^{-1}(s_{k+1}s_{k+2}^{-1}\cdots s_{2n-k})u_k\ldots u_2^{-1}u_1)=\tau(s_{k+1}s_{k+2}^{-1}\ldots s_{2n-k})$$
it follows that
\begin{eqnarray*}
  \tau(b_k)&=&(q-1)q^{k-1}\sum_{(s_{k+1},\ldots,s_{2n-k})\in \tilde E_{2n-2k}} \tau(s_{k+1} s_{k+2}^{-1}\cdots s_{2n-k})\\
  &=&(q-1)q^{k-1}\tau(z_{n-k}).
\end{eqnarray*}
The same formula holds for $k$ even ($1\le k\le n-1$) by an obvious modification of the proof.
This proves the proposition.
\end{proof}

\begin{pro} \label{p:xietazetaRelatedToEachOther}

For $n\ge1$ we have
\begin{itemize}
  \item [($i$)]  $\xi_n=\eta_n+(q-1)\sum \limits_{i=1}^{n-1} q^{i-1}\eta_{n-i}$
  \item [($ii$)] $\eta_n=\xi_n-(q-1)\sum\limits_{i=1}^{n-1} \xi_i$.
\end{itemize}
Similarly,
\begin{itemize}
  \item [($iii$)] $\eta_n=\zeta_n+(q-1)\sum\limits_{i=1}^{n-1}q^{i-1}\zeta_{n-i}$
  \item [($iv$)] $\zeta_n=\eta_n-(q-1)\sum\limits_{i=1}^{n-1}\eta_i.$
\end{itemize}
\end{pro}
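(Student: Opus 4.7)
The plan is to derive $(i)$ and $(iii)$ directly from the identities already established in Propositions~\ref{p:hnstarhneqh2nplus} and \ref{p:6.7page29A}, respectively, and then to obtain $(ii)$ and $(iv)$ by inverting those convolution-type recursions.

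For $(i)$, I would apply the trace $\tau$ to the identity
\[
h_n^* h_n = h_{2n} + (q+1)q^{n-1} e + (q-1)\sum_{i=1}^{n-1} q^{i-1} h_{2n-2i}
\]
from Proposition~\ref{p:hnstarhneqh2nplus}. Since $\tau(h_n^*h_n) = \|h_n\|_2^2$, $\tau(e) = 1$, and $\tau(h_{2k}) = \eta_k$, the equation becomes
\[
\|h_n\|_2^2 = \eta_n + (q+1)q^{n-1} + (q-1)\sum_{i=1}^{n-1} q^{i-1}\eta_{n-i}.
\]
Subtracting $(q+1)q^{n-1}$ from both sides and using the definition $\xi_n = \|h_n\|_2^2 - (q+1)q^{n-1}$ yields $(i)$. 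Part $(iii)$ is immediate: Proposition~\ref{p:6.7page29A} reads $\tau(h_{2n}) = \tau(z_n) + (q-1)\sum_{k=1}^{n-1} q^{k-1}\tau(z_{n-k})$, which by the definitions $\eta_n = \tau(h_{2n})$ and $\zeta_n = \tau(z_n)$ is exactly $(iii)$.

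Both $(i)$ and $(iii)$ have the common form $a_n = b_n + (q-1)\sum_{i=1}^{n-1} q^{i-1} b_{n-i}$, so $(ii)$ and $(iv)$ will follow from one and the same inversion lemma. For $(ii)$, I would first rewrite $(i)$ for index $n-1$, multiply by $q$, and subtract to obtain the two-term recursion
\[
\xi_n - q\,\xi_{n-1} = \eta_n - \eta_{n-1}, \qquad n \ge 2,
\]
(together with $\xi_1 = \eta_1$ from the case $n=1$ of $(i)$). Telescoping this from $1$ to $n$ gives
\[
\eta_n = \eta_1 + \sum_{k=2}^{n}(\xi_k - q\,\xi_{k-1}) = \sum_{k=1}^{n}\xi_k - q\sum_{k=1}^{n-1}\xi_k = \xi_n - (q-1)\sum_{k=1}^{n-1}\xi_k,
\]
which is $(ii)$. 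The identical argument, with $(\xi_n,\eta_n)$ replaced by $(\eta_n,\zeta_n)$ and $(iii)$ playing the role of $(i)$, establishes $(iv)$.

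There is no real obstacle here: once Propositions~\ref{p:hnstarhneqh2nplus} and \ref{p:6.7page29A} are in place, the content of Proposition~\ref{p:xietazetaRelatedToEachOther} is a two-line algebraic translation for $(i)$ and $(iii)$ and a standard inversion of a lower-triangular convolution for $(ii)$ and $(iv)$. The only mildly delicate point is verifying the two-term recursion $\xi_n - q\xi_{n-1} = \eta_n - \eta_{n-1}$, which hinges on the telescoping identity $(q-1)\sum_{i=1}^{n-1}q^{i-1}\eta_{n-i} - q(q-1)\sum_{i=1}^{n-2}q^{i-1}\eta_{n-1-i} = (q-1)\eta_{n-1}$; keeping the index shifts straight is the one place where care is needed.
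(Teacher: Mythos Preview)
Your proof is correct. Parts $(i)$ and $(iii)$ are obtained exactly as in the paper, by applying $\tau$ to the identities in Propositions~\ref{p:hnstarhneqh2nplus} and~\ref{p:6.7page29A}.

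For the inversions $(ii)$ and $(iv)$ you take a different route from the paper. The paper packages the sequences into generating functions $A(t)=\sum_{n\ge1}\xi_n t^n$, $B(t)=\sum_{n\ge1}\eta_n t^n$, observes that $(i)$ reads $A(t)=B(t)\dfrac{1-t}{1-qt}$, and then simply multiplies by the inverse rational function $\dfrac{1-qt}{1-t}=1-(q-1)\sum_{k\ge1}t^k$ to read off $(ii)$ coefficientwise; $(iv)$ is handled identically. Your approach instead extracts the two-term recursion $\xi_n-q\xi_{n-1}=\eta_n-\eta_{n-1}$ and telescopes. The two arguments are of course equivalent: your recursion is precisely the coefficient identity obtained from $(1-qt)A(t)=(1-t)B(t)$. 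The generating-function version has the advantage of making the structural symmetry between $(i)\leftrightarrow(ii)$ and $(iii)\leftrightarrow(iv)$ transparent (the same rational factor and its inverse appear in both pairs), while your version is slightly more elementary and avoids any convergence discussion.
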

\begin{proof}
  Consider the power series,
$$A(t):=\sum_{n=1}^{\infty} \xi_n t^n,\qquad B(t):=\sum_{n=1}^{\infty}\eta_n t^n, \qquad C(t):=\sum_{n=1}^{\infty} \zeta_n t^n.$$
By Proposition~\ref{p:xietazetabounded}, they are all convergent for all $t\in\C$ with $|t|<\frac 1{q^2}$.
We have already seen that $(i)$ follows from Proposition~\ref{p:hnstarhneqh2nplus}.
By $(i)$ we have for $|t|<\frac 1 {q^2}$:
$$A(t)=B(t)\big(1+(q-1)\sum_{k=0}^\infty q^kt^{k+1}\big)=B(t) \frac{1-t}{1-qt}.$$
Hence for $|s|<\tfrac 1{q^2}$, we have
$$B(t)=A(t)\frac{1-qt}{1-t}=A(t)\big(1-(q-1)\sum_{k=1}^nt^k\big)$$
By comparing the coefficients of $t^n$ in the power series expansion of $B(t)$ and of $A(t)\frac{1-qt}{1-t}$ we get ($ii$).
Note next that $(iii)$ follows from Proposition~\ref{p:6.7page29A}.
Hence, as in the proof of ($i$)$\Rightarrow$($ii$) we get
$$B(t)=C(t)\frac{1-t}{1-qt}\qquad\text{and}\qquad C(t)=\frac{1-qt}{1-t}B(t)$$
which implies $(iv)$.
\end{proof}


\subsection{\underline{The symmetric measure $\mu_{\tilde h}$}}
Let $Y\subset \Gamma$ be a finite subset with $|Y|=q+1$ elements and let as before
 $$h:=\sum_{s\in Y} s\in \C\Gamma$$
and
\begin{equation}\label{e:tildeh}
  \tilde h:=\left(
            \begin{array}{cc}
              0 & h^* \\
              h & 0 \\
            \end{array}
          \right)=\left(
            \begin{array}{cc}
              0 & \sum\limits_{s\in S} s^{-1} \\
              \sum\limits_{s\in S} s & 0 \\
            \end{array}
          \right)\in M_2(\C\Gamma).
\end{equation}
Recall that the trace $\tilde \tau=\tau\otimes\tau_2$ on $M_2(\C\Gamma)$ was defined in Eq.~(\ref{e:tautilde}).
Let $\mu_{\tilde h}$ be the spectral measure of $\tilde h$ on the interval $[-(q+1),q+1]$ with respect to the trace $\tilde \tau$, i.e. the unique probability measure on $[-(q+1),q+1]$ satisfying
$$\tilde\tau({\tilde h}^n)=\int_{-(q+1)}^{q+1} t^n \,d\mu_{\tilde h},\qquad n\in\N_0.$$
(See Section~\ref{s:two}).
Since
$$
\tilde h^{2m}=\left(
                 \begin{array}{cc}
                   (h^*h)^m & 0 \\
                   0 & (hh^*)^m \\
                 \end{array}
               \right)\qquad m\ge0
$$
and
$$
\tilde h^{2m+1}=\left(
                 \begin{array}{cc}
                  0& h^*(hh^*)^m  \\
                  h(h^*h)^m &0\\
                 \end{array}
               \right)\qquad m\ge0
$$
we have
$$\int_{-(q+1)}^{q+1} t^{2m} \,d\mu_{\tilde h}(t)=\frac {\tau((h^*h)^m)+ \tau((hh^*)^m)}2=\tau((h^*h)^m) , \qquad m\ge 0,$$
and
$$\int_{-(q+1)}^{q+1} t^{2m+1} \,d\mu_{\tilde h}(t)=0 , \qquad m\ge 0.$$
The latter condition implies via Riesz representation theorem that $\mu_{\tilde h}$ is symmetric. i.e.
$\mu_{\tilde h}=\check{\mu}_{\tilde h}$, where $\check\mu_{\tilde h}$ is the image measure of $\mu_{\tilde h}$ with respect to the map $t\mapsto -t$.
(This could also be shown algebraically).
Let
\begin{eqnarray}\label{e:mn}
  m_n&:=&\tau((h^*h)^n)\\
\nonumber&=&\int_{\mathsmaller{-(q+1)}}^{\mathsmaller{q+1}} t^{2n}\,d\mu_{\tilde h}(t).
\end{eqnarray}
Notice that $m_0=1$ and $m_1=q+1$.

\begin{pro}\label{p:etazetaInTermsOfChebyshevPols}

  For $n\in\N$, let $\eta_n$, $\zeta_n$ be as in Eq.~(\ref{e:xirhozetaN}), and $T_n$, $U_n$ be the Chebyshev polynomials of first and second kind.  Then
\begin{itemize}
  \item [$(i)$] $\eta_n=q^n\int_{\mathsmaller{-(q+1)}}^{\mathsmaller{q+1}} \frac 2 q T_{2n}\left(\frac t {2\sqrt q}\right)+\frac{q-1}q U_{2n}\left(\frac t{2\sqrt q}\right)\,d\mu_{\tilde h}(t)$
  \item [$(ii)$] $\zeta_n=(q-1)+2 q^{n}\int_{\mathsmaller{-(q+1)}}^{\mathsmaller{q+1}} T_{2n}\left(\frac t{2\sqrt q}\right)\,d\mu_{\tilde h}(t)$.
\end{itemize}
\end{pro}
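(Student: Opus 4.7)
The plan is to read off both formulas from the Chebyshev expansions of $Q_{2n}$ established in Proposition~\ref{p:QnRelatedToChebyshevPolys}, combined with Theorem~\ref{t:hnConnectedToQn} and the identification of $\mu_{\tilde h}$ as the spectral distribution of $\tilde h$ with respect to $\tilde \tau$. The key computation is showing that $\tilde\tau(Q_{2n}(\tilde h))=\eta_n$.

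For $(i)$, I would start from Theorem~\ref{t:hnConnectedToQn}$(ii)$, which gives $\operatorname{diag}(h_{2n},k_{2n}) = Q_{2n}(\tilde h)$ in $M_2(\C\Gamma)$. Applying $\tilde\tau$ to both sides and using the definition of $\tilde\tau$ in (\ref{e:tautilde}) yields
\begin{equation*}
\tilde\tau\bigl(Q_{2n}(\tilde h)\bigr)=\tfrac{1}{2}\bigl(\tau(h_{2n})+\tau(k_{2n})\bigr).
\end{equation*}
By Theorem~\ref{t:hnConnectedToQn}$(iv)$, $h_{2n}=Q_n^{(1)}(h^*h)$ and $k_{2n}=Q_n^{(1)}(hh^*)$; since $\tau$ is a trace we have $\tau((h^*h)^j)=\tau((hh^*)^j)$ for every $j$, hence $\tau(h_{2n})=\tau(k_{2n})=\eta_n$. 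On the other hand, by the definition of the spectral measure $\mu_{\tilde h}$,
\begin{equation*}
\tilde\tau\bigl(Q_{2n}(\tilde h)\bigr)=\int_{-(q+1)}^{q+1} Q_{2n}(t)\,d\mu_{\tilde h}(t).
\end{equation*}
Substituting the Chebyshev expansion of $Q_{2n}$ from Proposition~\ref{p:QnRelatedToChebyshevPolys}$(i)$ gives formula $(i)$ immediately.

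For $(ii)$, I would apply $\tilde\tau$ to the polynomial identity from Proposition~\ref{p:QnRelatedToChebyshevPolys}$(ii)$ evaluated at $\tilde h$. By the step above, $\tilde\tau(Q_{2k}(\tilde h))=\eta_k$ for each $k\ge 1$, and $\tilde\tau(I)=1$, so
\begin{equation*}
\eta_n-(q-1)\sum_{k=1}^{n-1}\eta_k=(q-1)+2q^n\int_{-(q+1)}^{q+1} T_{2n}\!\left(\tfrac{t}{2\sqrt q}\right)d\mu_{\tilde h}(t).
\end{equation*}
By Proposition~\ref{p:xietazetaRelatedToEachOther}$(iv)$ the left-hand side is precisely $\zeta_n$, yielding $(ii)$.

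The only nonobvious ingredient is the identification $\tau(h_{2n})=\tau(k_{2n})$, and this follows effortlessly from the factorization $h_{2n}=Q_n^{(1)}(h^*h)$, $k_{2n}=Q_n^{(1)}(hh^*)$ in Theorem~\ref{t:hnConnectedToQn}$(iv)$ together with the trace property. Thus the whole proof is essentially bookkeeping: convert the matrix identities to scalar identities via $\tilde\tau$, express $Q_{2n}$ in the Chebyshev basis, and invoke (\ref{e:mn})-style spectral integration.
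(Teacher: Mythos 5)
Your proof is correct and follows essentially the same route as the paper: both reduce everything to the identity $\eta_n=\int Q_{2n}(t)\,d\mu_{\tilde h}(t)$ and then invoke Proposition~\ref{p:QnRelatedToChebyshevPolys} and Proposition~\ref{p:xietazetaRelatedToEachOther}$(iv)$. The only (cosmetic) difference is that you obtain this identity by applying $\tilde\tau$ to the matrix identity of Theorem~\ref{t:hnConnectedToQn}$(ii)$ together with the trace equality $\tau(k_{2n})=\tau(h_{2n})$, whereas the paper uses $\tau(p(h^*h))=\int p(t^2)\,d\mu_{\tilde h}(t)$ with $h_{2n}=Q_n^{(1)}(h^*h)$ and $Q_{2n}(t)=Q_n^{(1)}(t^2)$.
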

\begin{proof}
  Since $\tau((h^*h)^m)=\tilde\tau(\tilde h^{2m})=\int_{\mathsmaller{-(q+1)}}^{\mathsmaller{q+1}} t^{2m}\,d\mu_{\tilde h}(t)$, $m\ge0$, we have for every polynomial $p\in\C[X]$:
$$\tau(p(h^*h))=\int_{\mathsmaller{-(q+1)}}^{\mathsmaller{q+1}} p(t^2)\,d\mu_{\tilde h}(t).$$
Hence, by Theorem~\ref{t:hnConnectedToQn}($iv$) and Eq.~(\ref{e:Q1nQ2n})
\begin{equation}\label{e:rhoeqintQ2ndmuhtilde}
  \eta_n=\tau(h_{2n})=\int_{\mathsmaller{-(q+1)}}^{\mathsmaller{q+1}} Q_n^{(1)}(t^2)\,d\mu_{\tilde h}(t)=\int_{\mathsmaller{-(q+1)}}^{\mathsmaller{q+1}} Q_{2n}(t)\,d\mu_{\tilde h}(t),\qquad n\ge1.
\end{equation}
By Proposition~\ref{p:QnRelatedToChebyshevPolys}($i$) we get ($i$), and by  Proposition~\ref{p:QnRelatedToChebyshevPolys}($ii$) and Proposition~\ref{p:xietazetaRelatedToEachOther}$(iv)$ we get ($ii$).
\end{proof}

Define
\begin{equation}\label{e:mqn}
  m_n^{(q)}:=\binom{2n}n q^n-(q-1)\sum_{k=0}^{n-1} \binom{2n}k q^k.
\end{equation}
Later on, we will show that these are the even moments for the measure in (\ref{e:muTildeh1}).

\begin{pro}\label{p:momentsContainingFreePart}

  For $n\ge 1$, we have
\begin{equation}\label{e:mnfromzetas}
  m_n=m_n^{(q)}+\sum_{k=0}^{n-1}\binom{2n}k q^k \zeta_{n-k}.
\end{equation}
\end{pro}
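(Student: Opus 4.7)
The plan is to reduce the identity to two already-established ingredients: Proposition~\ref{p:etazetaInTermsOfChebyshevPols}$(ii)$, which expresses $\zeta_n-(q-1)$ as an integral of the Chebyshev polynomial $T_{2n}(t/(2\sqrt q))$ against $\mu_{\tilde h}$, together with the classical expansion of $(2\cos\theta)^{2n}$ in terms of Chebyshev polynomials of the first kind. Both sides of the target identity are linear in the quantities $\{\zeta_j-(q-1)\}_{j=1}^n$, so it suffices to verify a single polynomial identity on $\R$ and then integrate.

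First I would rewrite the left-hand side using the defining property (\ref{e:mn}):
\begin{equation*}
  m_n=\int_{-(q+1)}^{q+1} t^{2n}\, d\mu_{\tilde h}(t).
\end{equation*}
The key polynomial identity I would establish is
\begin{equation*}
  (2\cos\theta)^{2n}=\binom{2n}{n}+2\sum_{j=1}^{n}\binom{2n}{n-j} T_{2j}(\cos\theta),
\end{equation*}
which follows by expanding $(e^{i\theta}+e^{-i\theta})^{2n}$ with the binomial theorem, pairing the $k$-th and $(2n-k)$-th terms, and using $T_{2j}(\cos\theta)=\cos(2j\theta)$. Since both sides are polynomials in $u=\cos\theta$ of degree $2n$, the identity extends to all $u\in\R$. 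Setting $u=t/(2\sqrt q)$ and multiplying by $q^n$ yields the polynomial identity
\begin{equation*}
  t^{2n}=q^n\binom{2n}{n}+2q^n\sum_{j=1}^{n}\binom{2n}{n-j}\,T_{2j}\!\left(\tfrac{t}{2\sqrt q}\right),\qquad t\in\R.
\end{equation*}

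Next I would integrate this identity against $d\mu_{\tilde h}(t)$ over $[-(q+1),q+1]$. By Proposition~\ref{p:etazetaInTermsOfChebyshevPols}$(ii)$, each of the resulting integrals satisfies
\begin{equation*}
  2q^j\int T_{2j}\!\left(\tfrac{t}{2\sqrt q}\right) d\mu_{\tilde h}(t)=\zeta_j-(q-1),
\end{equation*}
so multiplying through by $q^{n-j}$ gives $2q^n\int T_{2j}(t/(2\sqrt q))\,d\mu_{\tilde h}=q^{n-j}(\zeta_j-(q-1))$. Substituting into the integrated identity produces
\begin{equation*}
  m_n=q^n\binom{2n}{n}+\sum_{j=1}^{n}\binom{2n}{n-j}q^{n-j}\bigl(\zeta_j-(q-1)\bigr).
\end{equation*}

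Finally, the reindexing $k=n-j$ (so $k$ runs from $0$ to $n-1$ and $\binom{2n}{n-j}=\binom{2n}{k}$) converts this to
\begin{equation*}
  m_n=\binom{2n}{n}q^n-(q-1)\sum_{k=0}^{n-1}\binom{2n}{k}q^k+\sum_{k=0}^{n-1}\binom{2n}{k}q^k\zeta_{n-k},
\end{equation*}
and the first two terms are exactly $m_n^{(q)}$ by definition (\ref{e:mqn}), giving the proposition. There is no real obstacle here beyond bookkeeping: the substantive work was carried out in Proposition~\ref{p:etazetaInTermsOfChebyshevPols}, and what remains is the standard Chebyshev expansion of $t^{2n}$ plus one change of summation variable.
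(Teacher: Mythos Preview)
Your proof is correct and follows essentially the same approach as the paper's: both expand $t^{2n}$ in Chebyshev polynomials via the binomial expansion of $(e^{i\theta}+e^{-i\theta})^{2n}$, substitute $t\mapsto t/(2\sqrt q)$, integrate against $\mu_{\tilde h}$, and invoke Proposition~\ref{p:etazetaInTermsOfChebyshevPols}$(ii)$. The only difference is cosmetic---you index the Chebyshev sum by $j$ (with $T_{2j}$) and reindex to $k=n-j$ at the end, whereas the paper indexes by $k$ (with $T_{2n-2k}$) from the outset.
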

\begin{proof}
 By Euler's Formula and the binomial theorem, we have
$$\cos^{2n}\theta=\frac1{2^{2n}}\left(\binom{2n}n+2\sum_{k=0}^{n-1}\binom{2n}k\cos2(n-k)\theta\right).$$
Substituting $t=\cos\theta$, we get
$$t^{2n}=\frac1{2^{2n}}\left(\binom{2n}n+2\sum_{k=0}^{n-1}\binom{2n}k T_{2n-2k}(t)\right),$$
for $-1\le t\le 1$, and hence also for all $t\in\R$ as both sides of the equality are polynomials.
Substituting $t$ with $\tfrac t{2\sqrt q}$ we get
$$t^{2n}=\binom{2n}n q^n+2\sum_{k=0}^{n-1}\binom{2n}k T_{2n-2k}(\frac t{2\sqrt q})q^n,$$
Integrating both sides with respect to $\mu_{\tilde h}$ we get by Eq.~(\ref{e:mn}) and Proposition~\ref{p:etazetaInTermsOfChebyshevPols}($ii$) that
$$m_n=\binom{2n}nq^n+\sum_{k=0}^{n-1}\binom{2n}k(\zeta_{n-k}-q+1)q^k,$$
which proves the proposition.
\end{proof}
A simple reformulation of (\ref{e:mnfromzetas}) yields a formula for computing the cyclic numbers $\zeta_n$ from the moments $m_n$:

\begin{cor}

We have $\zeta_1=m_1-m_1^{(q)}$, and
$$\zeta_n=m_n-m_n^{(q)}-\sum_{k=1}^{n-1} \binom{2n}k q^k \zeta_{n-k}\qquad (n>1).$$
\end{cor}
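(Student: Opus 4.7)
The plan is simply to isolate the $k=0$ term in the summation of Proposition~\ref{p:momentsContainingFreePart} and rearrange. Since the formula
$$m_n=m_n^{(q)}+\sum_{k=0}^{n-1}\binom{2n}{k} q^k \zeta_{n-k}$$
already expresses $m_n$ linearly in $\zeta_1,\ldots,\zeta_n$, inverting it for $\zeta_n$ is immediate once one notes that the coefficient of $\zeta_n$ (the $k=0$ summand) is $\binom{2n}{0}q^0=1$.

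More explicitly, I would split off the $k=0$ term to write
$$m_n=m_n^{(q)}+\zeta_n+\sum_{k=1}^{n-1}\binom{2n}{k} q^k \zeta_{n-k},$$
and then solve for $\zeta_n$ to obtain the stated recursion
$$\zeta_n=m_n-m_n^{(q)}-\sum_{k=1}^{n-1}\binom{2n}{k} q^k \zeta_{n-k}\qquad(n>1).$$
For $n=1$ the sum $\sum_{k=1}^{n-1}$ is empty, leaving $\zeta_1=m_1-m_1^{(q)}$, which matches the base case in the statement.

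There is essentially no obstacle: the content of the corollary is already contained in Proposition~\ref{p:momentsContainingFreePart}. The only observation needed is that the linear system relating $(\zeta_1,\ldots,\zeta_n)$ to $(m_1,\ldots,m_n)$ is triangular with unit diagonal, so it can be inverted term by term without introducing any denominators or auxiliary constants.
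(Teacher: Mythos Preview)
Your argument is correct and is exactly what the paper intends: it introduces the corollary as ``a simple reformulation of (\ref{e:mnfromzetas})'' and gives no further proof. Splitting off the $k=0$ term and solving for $\zeta_n$ is precisely that reformulation.
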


\section{\textbf{Amenability, Leinert sets and Cogrowth}}\label{s:six}

\subsection{\underline{Leinert sets}}

\begin{defn}[\cite{Lei}, Definition III.B in \cite{AO}]

  A subset $Y$ of a group $\Gamma$ is called a Leinert set if for all $n\in\N$,   all tuples $(s_1,\ldots, s_{2n})\in E_{2n}$ satisfy
$$s_1^{-1}s_2s_2^{-1}\cdots s_{2n-1}^{-1}s_{2n}\ne e.$$
\end{defn}
By Theorem IIIF and Theorem IID(b) in \cite{AO}  we have
\begin{itemize}
  \item [($i$)] If $Y\subset \Gamma$ and $Y\cap Y^{-1}=\emptyset$ then $Y\cup Y^{-1}$ is a Leinert set if and only if $Y$ generates freely a copy of the free group $\F_{|Y|}$ with $|Y|$ generators inside $\Gamma$.
  \item [($ii$)] If $Y\subset \Gamma$ and $e\not\in Y$ then $Y\cup\{e\}$ is a Leinert set if and only if $Y$ generates freely a copy of $\F_{|Y|}$ inside $\Gamma$.
\end{itemize}
By the following theorem, due to Kesten and Lehner, the norm of $h$ from Section~\ref{s:five} is bounded by two values, the lower bound is related to Leinert sets and the upper bound to amenability.

\begin{thm}[\cite{Ke2},\cite{Leh}]\label{t:LowerboundLeinerSetsUpperBoundAmenability}

  Let $Y$ be a finite set in a discrete group $\Gamma$ with $|Y|=q+1$ elements $(q\ge 2)$.
Let  $h:=\sum_{s\in Y} s$. Then
\begin{itemize}
  \item [($i$)] $2\sqrt q\le ||h|| \le q+1.$
  \item [($ii$)] $||h||=q+1$ if and only if the subgroup $\Gamma_0:=\langle Y^{-1}Y \rangle\subset \Gamma$  generated by $Y^{-1}Y$  is amenable.
  \item [($iii$)] $||h||=2 \sqrt q$ if and only if $Y$ is a Leinert set.
\end{itemize}
\end{thm}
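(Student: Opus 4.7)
I would prove the three parts separately: the upper bound in (i) plus (ii) by standard amenability arguments; the lower bound in (i) plus the ``Leinert $\Rightarrow$ norm $=2\sqrt q$'' half of (iii) by an explicit Kesten-type measure $\mu^{(q)}$; and finally the converse direction of (iii), which I expect to be the main obstacle.

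\emph{Upper bound and (ii).}  Since each $\lambda(s)$ is unitary, the triangle inequality gives $\|h\|\le q+1$.  For (ii), the key observation is that $h^*h=\sum_{s,t\in Y}s^{-1}t$ lies in $\mathbb C\Gamma_0$, and the inclusion $L(\Gamma_0)\hookrightarrow L(\Gamma)$ is isometric because $\ell^2(\Gamma)$ decomposes $\Gamma_0$-equivariantly as a direct sum of copies of $\ell^2(\Gamma_0)$ indexed by the right cosets $\Gamma_0\backslash\Gamma$.  If $\Gamma_0$ is amenable, take a F\o{}lner sequence $(F_n)\subset\Gamma_0$; the normalized indicators $\xi_n=\mathbf 1_{F_n}/|F_n|^{1/2}$ satisfy $\langle h^*h\xi_n,\xi_n\rangle=\sum_{s,t\in Y}\langle\lambda(s^{-1}t)\xi_n,\xi_n\rangle\to(q+1)^2$, hence $\|h\|=q+1$.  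Conversely, if $\|h\|=q+1$, choose unit $\xi_n\in\ell^2(\Gamma)$ with $\|h\xi_n\|\to q+1$; expanding the squared norm and comparing with the maximum $(q+1)^2$ forces $\langle\lambda(s)\xi_n,\lambda(t)\xi_n\rangle\to 1$ for each $s,t\in Y$, hence $\|\lambda(g)\xi_n-\xi_n\|\to 0$ for every $g\in Y^{-1}Y$, and therefore for every $g\in\Gamma_0$.  Decomposing $\xi_n$ over the cosets and choosing for each $n$ a coset $c_n$ on which the $L^2$-weighted average of residuals is small gives asymptotically $\Gamma_0$-invariant unit vectors in $\ell^2(\Gamma_0)$, proving amenability.

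\emph{Lower bound in (i) and $(\Leftarrow)$ in (iii).}  Introduce the symmetric probability measure
\[
d\mu^{(q)}(t)=\frac{q+1}{2\pi}\cdot\frac{\sqrt{4q-t^2}}{(q+1)^2-t^2}\,\mathbf 1_{[-2\sqrt q,\,2\sqrt q]}(t)\,dt.
\]
A standard contour-integral calculation verifies that its $2n$-th moments equal $m_n^{(q)}$ from Eq.~\eqref{e:mqn}, with support exactly $[-2\sqrt q,\,2\sqrt q]$ including the endpoints.  From $\zeta_n\ge0$ (Proposition~\ref{p:xietazetabounded}) and Proposition~\ref{p:momentsContainingFreePart} we obtain $m_n\ge m_n^{(q)}$ for every $n$, so Proposition~\ref{p:4.1} yields $\|h\|^2=\lim_n m_n^{1/n}\ge\lim_n(m_n^{(q)})^{1/n}=4q$.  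When $Y$ is Leinert, the definition gives $\eta_n=\tau(h_{2n})=0$ for every $n$, hence $\zeta_n=0$ by Proposition~\ref{p:xietazetaRelatedToEachOther}(iv), hence $m_n=m_n^{(q)}$ by Proposition~\ref{p:momentsContainingFreePart}.  Uniqueness in the compactly supported moment problem identifies $\mu_{\tilde h}=\mu^{(q)}$, so $\|h\|=\max\text{supp}(\mu^{(q)})=2\sqrt q$.

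\emph{The hard direction $(\Rightarrow)$ in (iii).}  Assume $\|h\|=2\sqrt q$ and suppose for contradiction that $Y$ is not Leinert.  Let $n_0$ be minimal with $\zeta_{n_0}\ge1$.  Proposition~\ref{p:etazetaInTermsOfChebyshevPols}(ii), after the substitution $t=2\sqrt q\cos\theta$, gives
\[
\zeta_n=(q-1)+2q^n\int_0^{\pi}\cos(2n\theta)\,d\nu(\theta),
\]
where $\nu$ is the pushforward of $\mu_{\tilde h}$ to $[0,\pi]$.  Combined with the growth $\|h_n\|_2^2\ge(q+1)q^{n-1}+(q-1)q^{n-n_0-1}$ forced by $\zeta_{n_0}\ge 1$ through Proposition~\ref{p:xietazetaRelatedToEachOther}, the idea is to contradict the envelope $\|h_{2m}\|\le C(q)\,m\,q^m$ following from Theorem~\ref{t:hnConnectedToQn}(iv), Proposition~\ref{p:QnRelatedToChebyshevPolys}(i), and the bound $|U_{2m}|\le 2m+1$ on $[-1,1]$.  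Since the elementary estimates are only polynomial in $n$, the contradiction requires care: in the symmetric case $Y=Y^{-1}$ this is precisely Grigorchuk's cogrowth theorem, while in the general case a free-probability identification of $\mu^{(q)}$ as the spectral distribution of $\sum_{i=0}^q u_i$ for $\ast$-free Haar unitaries $u_i$ (and its uniqueness among distributions supported in $[-2\sqrt q,2\sqrt q]$ compatible with the moment constraints $m_n=\tau((h^*h)^n)$) closes the argument.  Making this extremality precise is the crux of the proof and the step I expect to require the most technical effort.
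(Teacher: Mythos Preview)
Your treatment of (i), (ii), and the Leinert $\Rightarrow 2\sqrt q$ half of (iii) is correct and takes a genuinely different route from the paper.  The paper simply cites Kesten for (ii) and Lehner (his Propositions 2 and 5, and Theorem 9) for the lower bound and for (iii); you instead give a self-contained F\o{}lner argument for (ii) and, more interestingly, derive the lower bound and the easy half of (iii) from the paper's own Section~\ref{s:five} machinery: $\zeta_n\ge 0$ forces $m_n\ge m_n^{(q)}$ via Proposition~\ref{p:momentsContainingFreePart}, hence $\|h\|\ge 2\sqrt q$ by Proposition~\ref{p:4.1}; and if $Y$ is Leinert then $\eta_n=0\Rightarrow\zeta_n=0\Rightarrow m_n=m_n^{(q)}\Rightarrow\mu_{\tilde h}=\mu^{(q)}$.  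This is a nice internal proof that the paper only obtains later, packaged as Theorem~\ref{t:Leinertxirhozetamnconnection}.

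The hard direction of (iii), however, has a real gap.  Your envelope argument cannot close: from $\|h\|=2\sqrt q$ and Proposition~\ref{p:QnRelatedToChebyshevPolys}(i) you correctly get $\|h_{2m}\|\le C(q)\,m\,q^{m}$, hence $\|h_{2m}\|_2^2\le C(q)^2 m^2 q^{2m}$.  But a single $\zeta_{n_0}\ge 1$ fed through Proposition~\ref{p:xietazetaRelatedToEachOther} only yields a lower bound of order $q^{2m}$ on $\|h_{2m}\|_2^2$, so the two bounds differ by a polynomial factor and no contradiction arises---exactly the issue you flag but do not resolve.  The fallback to ``Grigorchuk's cogrowth theorem in the symmetric case'' and a ``free-probability extremality'' in general is not a proof; it is a citation in disguise, and a less precise one than the paper's, which points to a specific result (Theorem~9 of \cite{Leh}) that handles the general, not necessarily symmetric, Leinert setting directly.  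If you want a self-contained argument here you need a genuinely new idea: for instance, showing that $\mu_{\tilde h}$ supported in $[-2\sqrt q,2\sqrt q]$ together with the combinatorial lower bound $\int Q_n^2\,d\mu_{\tilde h}=\|h_n\|_2^2\ge (q+1)q^{n-1}$ forces all $\int Q_n\,d\mu_{\tilde h}=\eta_{n/2}$ to vanish. This is not automatic (the pointwise bound $|Q_n|\le C n q^{n/2}$ on $[-2\sqrt q,2\sqrt q]$ is too weak, as the example $\mu=\tfrac12(\delta_{2\sqrt q}+\delta_{-2\sqrt q})$ shows that $\int Q_n^2\,d\mu$ can exceed $(q+1)q^{n-1}$), so group structure must enter in a sharper way than your sketch indicates.
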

\begin{proof}
($i$): The upper bound is trivial since each $s$ is a unitary operator in $L(\Gamma)$.
The proof of the lower bound is the following:
Write $Y=\{s_1,\ldots,s_{q+1}\}$. Apply now Proposition 2 and Proposition 5 of \cite{Leh} to $G=\F_{q+1}$, $H=\Gamma$, and $\rho:G\to H$ the unique group homomorphism for which
$$\rho(t_i)=s_i, \qquad (1\le i\le q+1),$$
where $t_1,\ldots,t_{q+1}$  are the generators of $\F_{q+1}$. Then
$$||\sum_{s\in Y} s||_{L(\Gamma)} \ge ||\sum_{i=1}^{q+1} t_i||_{L(\F_{q+1})}=2\sqrt q.$$
\\($ii$):
  We have $$h^*h=\sum_{s_1,s_2\in Y} s_1^{-1}s_2=\sum_{s\in Y^{-1}Y} c_s s,$$
where $c_{s^{-1}}=\bar{c}_s=c_s$ for all $s\in Y^{-1}Y$ because $h^*h$ is self-adjoint.
Moreover, $Y^{-1}Y$ is a symmetric set (i.e. $Y^{-1}Y=(Y^{-1}Y)^{-1}$), and
$$\sum_{s\in Y^{-1}Y} c_s=|Y|^2=(q+1)^2.$$
Hence, by Section 3 of \cite{Ke2}, $\Gamma_0$ is amenable if and only if $||h^*h||=(q+1)^2$. This proves $(ii)$ because $||h^*h||=||h||^2$.
\\
($iii$):
The proof of $(iii)$ follows from Theorem 9 of \cite{Leh}.
\end{proof}
Note that in view of Remark~\ref{r:7.3} below, Theorem~\ref{t:1.2} and Theorem~\ref{t:1.3} in the introduction are both special cases of Theorem~\ref{t:LowerboundLeinerSetsUpperBoundAmenability}.

\begin{rem}\label{r:7.3}

  If $Y$ is a symmetric set  (i.e. $Y=Y^{-1}$) it follows immediately from Kesten's Theorem, that $||h||=q+1$ if and only if the subgroup $\langle Y\rangle \subset \Gamma$ generated by $Y$ is amenable. This follows also from $(ii)$ because $\langle Y^2 \rangle \subset \langle Y \rangle $ is a subgroup of $\langle Y \rangle $ of index at most $2$.( Recall that, amenability of a group is always preserved by any subgroup. On the other hand, amenability of a subgroup is preserved by the group if the index is finite.)
\end{rem}

\begin{thm}\label{t:Leinertxirhozetamnconnection}

  Let $Y\subset \Gamma$ be a finite subset in a discrete group $\Gamma$ with $|Y|=q+1$ elements, where $q\ge 1$. Then, with the notation from Section~\ref{s:five}, the following are equivalent:
\begin{itemize}
  \item [($i$)]  $Y$ is a Leinert set.
  \item [($ii$)] $||h_n||_2^2=(q+1)q^{n-1}$ for all $n\in\N$.
  \item [($iii$)] $\xi_n=0$ for all $n\in\N$.
  \item [($iv$)] $\eta_n=0$ for all $n\in\N$.
  \item [($v$)] $\zeta_n=0$ for all $n\in\N$.
  \item [($vi$)]  $m_n=m_n^{(q)}$, where $m_n^{(q)}$ is given by Eq.~(\ref{e:mqn}).
  \item [($vii$)] $\mu_{\tilde h}=\mu^{(q)}$ where
\begin{equation}
  \label{e:muTildeh1} \mu^{(q)}=\frac{q+1}{2\pi}\frac{(4q-t^2)^{1/2}}{(q+1)^2-t^2}\,1_{[-2\sqrt q,2\sqrt q]}(t)\,dt.
\end{equation}
\end{itemize}
\end{thm}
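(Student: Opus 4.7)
The plan is to prove the equivalences as a chain, exploiting the linear relations between the five sequences established in Section~\ref{s:five}. The equivalence (ii) $\iff$ (iii) is immediate from the definition $\xi_n = \|h_n\|_2^2 - (q+1)q^{n-1}$. For (i) $\iff$ (iv), I would observe that $\eta_n = \tau(h_{2n})$ is, by the definitions of $h_{2n}$ and $\tau$, exactly the number of tuples $(s_1,\ldots,s_{2n})\in E_{2n}$ whose alternating product equals $e$; so $Y$ being Leinert is literally the statement $\eta_n = 0$ for all $n$.

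For (iii) $\iff$ (iv) $\iff$ (v), I would apply Proposition~\ref{p:xietazetaRelatedToEachOther}, which expresses $\xi_n$ as a linear combination of $\eta_1,\ldots,\eta_n$ with strictly positive coefficients (and similarly $\eta_n$ in terms of $\zeta_1,\ldots,\zeta_n$). Combined with the non-negativity of all three sequences (Proposition~\ref{p:xietazetabounded}), a one-line induction starting from $\xi_1 = \eta_1 = \zeta_1$ shows that any one of these three sequences vanishes identically iff the other two do. For (v) $\iff$ (vi), I would invoke the formula $m_n - m_n^{(q)} = \sum_{k=0}^{n-1}\binom{2n}{k} q^k\,\zeta_{n-k}$ from Proposition~\ref{p:momentsContainingFreePart}: every term on the right is non-negative, so this difference vanishes for all $n$ iff $\zeta_n = 0$ for all $n$.

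The nontrivial step is (vi) $\iff$ (vii). Granted that the even moments of $\mu^{(q)}$ are $m_n^{(q)}$, the equivalence is immediate: both $\mu_{\tilde h}$ and $\mu^{(q)}$ are symmetric compactly supported probability measures on $\R$ (hence uniquely determined by their moments), both have vanishing odd moments, and the even moments of $\mu_{\tilde h}$ are the $m_n$ by Eq.~(\ref{e:mn}). So the real task is computing $\int t^{2n}\,d\mu^{(q)}(t)$. I would substitute $t = 2\sqrt{q}\cos\theta$ and use the factorization
$$(q+1)^2 - 4q\cos^2\theta \;=\; q^2 - 2q\cos 2\theta + 1 \;=\; |1-qe^{2i\theta}|^2.$$
For $q\geq 2$, the reverse-Poisson expansion (valid since $q>1$)
$$\frac{1}{|1-qe^{2i\theta}|^2} \;=\; \frac{1}{q^2-1}\sum_{m\in\Z} q^{-|m|} e^{2im\theta}$$
reduces the problem to picking off Fourier coefficients, and a direct calculation yields
$$\int T_{2j}\!\left(\tfrac{t}{2\sqrt q}\right)\,d\mu^{(q)}(t) \;=\; \frac{1-q}{2q^j}, \qquad j\geq 1.$$
Feeding this into the Chebyshev expansion
$$t^{2n}=\binom{2n}{n}q^n + 2q^n\sum_{k=0}^{n-1}\binom{2n}{k} T_{2n-2k}\!\left(\tfrac{t}{2\sqrt q}\right),$$
derived in the proof of Proposition~\ref{p:momentsContainingFreePart}, and integrating term by term gives $\int t^{2n}\,d\mu^{(q)}(t) = m_n^{(q)}$. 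For the residual case $q=1$, $\mu^{(1)}$ is the arcsine distribution on $[-2,2]$, whose $2n$-th moment is the classical value $\binom{2n}{n} = m_n^{(1)}$.

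The main obstacle is the moment computation for $\mu^{(q)}$ in this final paragraph; every other equivalence is a formal consequence of the linear identities proved in Section~\ref{s:five} combined with the non-negativity of $\xi_n$, $\eta_n$, $\zeta_n$.
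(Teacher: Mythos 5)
Your proposal is correct, and for all links except the last one it follows the same route as the paper: (i)$\iff$(iv) from the definition of $\eta_n=\tau(h_{2n})$, (ii)$\iff$(iii) from the definition of $\xi_n$, (iii)$\iff$(iv)$\iff$(v) from Proposition~\ref{p:xietazetaRelatedToEachOther}, and (v)$\iff$(vi) from Proposition~\ref{p:momentsContainingFreePart}. Where you genuinely diverge is the final step: the paper closes the chain by proving (iv)$\iff$(vii) rather than (vi)$\iff$(vii). It uses $\eta_n=\int Q_{2n}\,d\mu_{\tilde h}$ (formula~(\ref{e:rhoeqintQ2ndmuhtilde})) together with symmetry of $\mu_{\tilde h}$ to reformulate (iv) as $\int Q_m\,d\mu_{\tilde h}=0$ for all $m\in\N$ plus $\int 1\,d\mu_{\tilde h}=1$; since $\Span\{1,Q_1,Q_2,\ldots\}$ is all of $\C[X]$, Weierstrass and Riesz show these conditions determine the measure, and the fact that $\mu^{(q)}$ satisfies them is quoted from Sawyer's orthogonality relations for these polynomials on trees. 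You instead verify (vi)$\iff$(vii) by computing the moments of $\mu^{(q)}$ directly via the substitution $t=2\sqrt q\cos\theta$, the factorization $(q+1)^2-t^2=|1-qe^{2i\theta}|^2$ and the Poisson-type expansion, obtaining $\int T_{2j}(t/(2\sqrt q))\,d\mu^{(q)}(t)=(1-q)/(2q^j)$ — I checked this value and the resulting identity $\int t^{2n}\,d\mu^{(q)}=m_n^{(q)}$, and they are correct (including the $q=1$ arcsine case), with the one small implicit step that you also need the $m=0$ Fourier term to confirm $\mu^{(q)}$ has total mass $1$, since the constant term $\binom{2n}{n}q^n$ in the Chebyshev expansion integrates against it. The trade-off: the paper's argument avoids any explicit moment computation by citing known orthogonality relations, and it ties (vii) directly to the cleanest combinatorial condition (iv); your argument is self-contained (no appeal to \cite{Saw}), uses only moment-determinacy of compactly supported measures, and yields the paper's subsequent corollary on the moments of $\mu^{(q)}$ as a by-product rather than a consequence of the theorem.
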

\begin{proof}
  $(i)\iff (iv)$: Recall that
$$\eta_n=\tau(h_{2n})=\tau(\sum_{(s_1,\ldots,s_{2})\in E_{2n}} s_1^{-1}s_2s_3^{-1}\cdots s_{2n-1}^{-1}s_{2n}).$$
Since $\tau(g)=\delta_{g,e}$ for $g\in\Gamma$ and a Leinert set omits the identity the equivalence follows.\\

$(ii)\iff (iii)$ This follows by definition of $\xi_n$.

$(iii)\iff(iv)\iff(v)$: This follows from Proposition~\ref{p:xietazetaRelatedToEachOther}.

$(v)\iff (vi)$: This follows from Proposition~\ref{p:momentsContainingFreePart}.

$(iv)\iff (vii)$: By (\ref{e:rhoeqintQ2ndmuhtilde}) we have for $n\in \N$ that
$$\eta_n=\tau(h_{2n})=\int_{I} Q_{2n}(t)\,d\mu_{\tilde h}(t)$$
 where $I=[-(q+1),q+1]$.
Moreover, since $\mu_{\tilde h}$  is a symmetric measure (cf.~Section~\ref{s:two}), and $Q_m$ is an odd polynomial when $m$ is odd,
\begin{equation*}
  \int_I Q_m(t)d\mu_{\tilde h}(t)=0\qquad m=1,3,5,\ldots.
\end{equation*}

Hence $(iv)$ is equivalent to
\begin{equation}\label{e:36}
  \int_I Q_m(t)d\mu_{\tilde h}(t)=0, \qquad m\in\N.
\end{equation}
Since $\mu_{\tilde h}$ is also a probability measure
\begin{equation}\label{e:37}
  \int_I 1 \,d\mu_{\tilde h}(t)=1.
\end{equation}
Since $\Span\{1,Q,Q_2,\ldots\}$ is the set of all polynomials in $\C[X]$ it follows from Weierstrass' approximation theorem and Riesz Representation theorem that $m_{\tilde h}$ is uniquely determined by (\ref{e:36}) and (\ref{e:37}).
In \cite{Saw} pp. 283-284 (see also \cite{Car}) a sequence of polynomials $(p_n)_{n=0}^\infty$ is defined by
\begin{eqnarray*}
&&p_0(t)=1\\
&&p_1(t)=t\\
&&p_2(t)=t^2-(a+1)\\
&&p_{n+1}(t)=tp_n(t)-a p_n(t)
\end{eqnarray*}
for a fixed number $a\in\N$, and it is proven in \cite{Saw} p. 284 that
\begin{equation}\label{e:38}
  \frac 1{2\pi}\int_{-2\sqrt a}^{2\sqrt a} p_k(t)p_{\ell}(t) \frac{(4a-t^2)^{1/2}}{(a+1)^2-t^2}\,dt=\frac{a^{k-1}}{\lambda_k}\delta_{k\ell},
\end{equation}
where $\lambda_k=1$ for $k\in\N$ and $\lambda_0=\frac{a+1}{a}$.
Letting now $a=q$, then for $n\ge 1$, $p_n(t)$ coincide with our polynomials $Q_n(t)$.
Hence using (\ref{e:38}) first with $k\in\N$ and $\ell=0$, and next with $k=\ell=0$ we get
\begin{equation}
  \frac 1{2\pi}\int_{-2\sqrt q}^{2\sqrt q} Q_n(t) \frac{(4q-t^2)^{1/2}}{(q+1)^2-t^2}\,dt=0\qquad n\in\N
\end{equation}
and
\begin{equation}
  \frac 1{2\pi}\int_{-2\sqrt q}^{2\sqrt q}  \frac{(4q-t^2)^{1/2}}{(q+1)^2-t^2}\,dt=\frac{1}{q+1}\qquad n\in\N.
\end{equation}
Hence, the measure $\mu^{(q)}$ defined in $(\ref{e:muTildeh1})$  satisfies (\ref{e:36}) and (\ref{e:37}). Therefore $(iv)$ is equivalent to that $\mu_{\tilde h}=\mu^{(q)}$.
\end{proof}

\begin{cor}

  The odd moments of $\mu^{(q)}$ are zero.
The even moments of $\mu^{(q)}$ are given by $(m_n^{(q)})_{n=1}^\infty$, i.e.
$$\int_I t^{2n}\, d\mu^{(q)}(t)\,dt=m_n^{(q)}, \qquad n\in\N,$$
where
$$m_n^{(q)}=\binom {2n}n q^n -(q-1)\sum_{k=0}^{n-1} \binom {2n}k q^k.$$
\end{cor}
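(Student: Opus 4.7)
The plan is to deduce the corollary directly from Theorem~\ref{t:Leinertxirhozetamnconnection} by exhibiting, for each $q \geq 1$, a concrete discrete group $\Gamma$ together with a Leinert subset $Y \subset \Gamma$ of cardinality $q+1$. The natural choice is $\Gamma = \F_{q+1}$, the free group on $q+1$ generators $s_1, \ldots, s_{q+1}$, and $Y = \{s_1, \ldots, s_{q+1}\}$.

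The first step is to verify that this $Y$ is indeed a Leinert set. For any tuple $(s_{i_1}, \ldots, s_{i_{2n}}) \in E_{2n}$, the product $s_{i_1}^{-1} s_{i_2} s_{i_3}^{-1} s_{i_4} \cdots s_{i_{2n-1}}^{-1} s_{i_{2n}}$ already is a reduced word in $\F_{q+1}$: adjacent pairs of letters are of the form $s_{i_k} s_{i_{k+1}}^{-1}$ or $s_{i_k}^{-1} s_{i_{k+1}}$ with $i_k \neq i_{k+1}$, so no free-group cancellation can occur. Hence the product is a nontrivial reduced word, and $Y$ is Leinert.

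The second step invokes Theorem~\ref{t:Leinertxirhozetamnconnection}: since $(i)$ holds for this pair $(\Gamma, Y)$, conditions $(vi)$ and $(vii)$ hold as well, giving $\mu_{\tilde h} = \mu^{(q)}$ and $m_n = m_n^{(q)}$. Combining this with the computations of Section~\ref{s:five} (the paragraph preceding Eq.~(\ref{e:mn})), which show that $\mu_{\tilde h}$ has vanishing odd moments and even moments equal to $\tau((h^*h)^n) = m_n$, one obtains both claims of the corollary simultaneously.

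There is essentially no obstacle here; the corollary is a clean consequence of the theorem once one observes that Leinert sets of every cardinality $q+1$ exist. For an independent sanity check, the vanishing of the odd moments of $\mu^{(q)}$ is also immediate from its definition: the density $\frac{q+1}{2\pi}\frac{(4q-t^2)^{1/2}}{(q+1)^2 - t^2}$ is an even function on the symmetric interval $[-2\sqrt{q}, 2\sqrt{q}]$, so $\int t^{2n+1}\,d\mu^{(q)}(t) = 0$ by oddness of the integrand.
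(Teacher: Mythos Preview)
Your proof is correct and follows essentially the same approach as the paper: exhibit a Leinert set $Y$ with $|Y|=q+1$ (the paper simply asserts its existence, you give the concrete example of the free generators of $\F_{q+1}$) and then apply Theorem~\ref{t:Leinertxirhozetamnconnection} to identify $\mu_{\tilde h}=\mu^{(q)}$ and $m_n=m_n^{(q)}$. Your added verification that the free generators form a Leinert set and the sanity check for the odd moments via evenness of the density are welcome elaborations but not departures from the paper's argument.
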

\begin{proof}
Let $Y$ be a Leinert set with $|Y|=q+1$ elements.
By Theorem~\ref{t:Leinertxirhozetamnconnection},  $\mu_{\tilde h}=\mu^{(q)}$ and $m_n=m_n^{(q)}$. Hence
$$\int_I t^{2n}\, d\mu^{(q)}(t)\,dt=\int_I t^{2n}\, d\mu_{\tilde h}(t)\,dt=m_{n}=m_n^{(q)}.$$
\end{proof}

\subsection{\underline{Connection to the cogrowth coefficients of Cohen and Grigorchuk}}$ $\\
Let $X$ be a finite set of generators of a group $\Gamma$ such that $X\cap X^{-1}=\emptyset$ and $|X|\ge 2$. Let $Y:=X\cup X^{-1}$ and $q=|Y|-1=2|X|-1$.
In \cite{C} and \cite{Gr}, Cohen and Grigorchuk independently  introduced the notion of cogrowth coefficients  $(\gamma_n)_{n=1}^\infty$ for $(\Gamma,X)$, by putting $\gamma_n$ equal to the number of elements in the set
$$\{(s_1,\ldots,s_n)\in Y^{n}\mid s_{i+1}\ne s_i^{-1}(1\le i\le n-1)\text{ and } s_1s_2\ldots s_n=e\}$$
As Cohen puts it, $\gamma_n$ is the number of reduced words in $Y$ of length $n$, which represent the unit element of $\Gamma$.
Since $\tau(g)=\delta_{g,e}$, $g\in\Gamma$ we have
\begin{equation}\label{e:gamman}
  \gamma_n=\sum_{(s_1,\ldots,s_n)\in E_n} \tau(s_1s_2\ldots s_n).
\end{equation}
Note that since $Y=Y^{-1}$, we have $\gamma_{2n}=\eta_n$ according to the definition of the reduced numbers $\eta_n$ in ($\ref{e:xirhozetaN}$).
Cohen proved in pp. 302-303 in \cite{C} that if $|X|\ge 2$ and $X$ does not generate $\Gamma$ freely, then
\begin{equation}\label{e:2.2}
  \gamma=\lim_{n\to\infty} \gamma_{2n}^{\frac 1{2n}}\,\,\,(=\lim_{n\to\infty} \eta_n^{\frac 1 {2n}})
\end{equation}
exists and $\gamma\in (\sqrt q,q]$. Moreover if we let  $h=\sum_{s\in Y} s$,
(assuming still that $X$ does not generate $\Gamma$ freely) by Theorem 3 in \cite{C}:
\begin{equation}\label{e:2.3}
  \gamma+\frac q \gamma = ||h||.
\end{equation}
Since $\gamma >\sqrt q$, it follows that
\begin{equation}
  \gamma=\frac 12 (||h||+\sqrt{||h||^2-4q}).
\end{equation}
We next prove the following extension of the above:

\begin{thm}\label{t:2.6}

  Let $Y\subset\Gamma$ be a finite set with $|Y|=q+1$ elements. Let $h=\sum_{s\in Y s}$ as in Theorem~\ref{t:LowerboundLeinerSetsUpperBoundAmenability},
and let $(\xi_n)$, $(\eta_n)$ $(\zeta_n)$ be as in ($\ref{e:xirhozetaN}$) and let
$$\gamma:=\frac 12(||h||+(||h||^2-4q)^{1/2}).$$
Then
\begin{equation}\label{e:2.5}
  \sqrt q\le \gamma\le q \qquad\text{and}\qquad \gamma+\frac q{\gamma}=||h||.
\end{equation}
Moreover, if $Y$ is not a Leinert set, and $q\ge 2$ then $\gamma>\sqrt q$  and
\begin{equation}\label{e:2.6}
  \gamma=\lim_{n\to\infty} ||h_n||_2^{\frac 1 {n}}=\lim_{n\to\infty} \xi_n^{\frac 1 {2n}}=\lim_{n\to\infty} \eta_n^{\frac 1 {2n}}=\lim_{n\to\infty} \zeta_n^{\frac 1 {2n}}.
\end{equation}
\end{thm}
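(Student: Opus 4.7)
The plan is to establish (2.5) directly from Theorem~\ref{t:LowerboundLeinerSetsUpperBoundAmenability}, and then to prove the four limits in (2.6) by expressing each of $||h_n||_2^2$, $\xi_n$, $\eta_n$, $\zeta_n$ as integrals of Chebyshev polynomials against the symmetric spectral measure $\mu_{\tilde h}$ and extracting the common exponential rate $\gamma$ by a Laplace-type argument that uses $||h||\in\Supp(\mu_{\tilde h})$.

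\emph{Equation (2.5).} By Theorem~\ref{t:LowerboundLeinerSetsUpperBoundAmenability}(i), $2\sqrt q\le||h||\le q+1$, so $||h||^2-4q\in[0,(q-1)^2]$ and $\gamma$ is a well-defined real number. The inequality $\gamma\ge\frac12||h||\ge\sqrt q$ is immediate, and $\sqrt{||h||^2-4q}\le q-1$ yields $\gamma\le\frac12((q+1)+(q-1))=q$. Since $\gamma$ is a root of $x^2-||h||x+q$, the other root is $q/\gamma$, and their sum equals $||h||$.

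\emph{Setup for (2.6).} Assume $q\ge 2$ and $Y$ is not a Leinert set, so by Theorem~\ref{t:LowerboundLeinerSetsUpperBoundAmenability}(iii) we have $||h||>2\sqrt q$, hence $\gamma>\sqrt q$. Set $t_0:=||h||/(2\sqrt q)>1$ and $\phi(t):=t+\sqrt{t^2-1}$ for $t\ge 1$; a short computation gives $\phi(t_0)=\gamma/\sqrt q$ and $q\phi(t_0)^2=\gamma^2$. Proposition~\ref{p:etazetaInTermsOfChebyshevPols} provides the integral representations
\begin{align*}
\eta_n &= q^n\int\left[\tfrac{2}{q}T_{2n}\!\left(\tfrac{s}{2\sqrt q}\right)+\tfrac{q-1}{q}U_{2n}\!\left(\tfrac{s}{2\sqrt q}\right)\right]d\mu_{\tilde h}(s),\\
\zeta_n &= (q-1)+2q^n\int T_{2n}\!\left(\tfrac{s}{2\sqrt q}\right)d\mu_{\tilde h}(s),
\end{align*}
while Theorem~\ref{t:hnConnectedToQn} together with the identity $\tilde\tau(Q_n(\tilde h)^2)=||h_n||_2^2$ gives $||h_n||_2^2=\int Q_n(s)^2\,d\mu_{\tilde h}(s)$, with $\xi_n=||h_n||_2^2-(q+1)q^{n-1}$ by definition.

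\emph{Asymptotic analysis.} Standard Chebyshev bounds give $|T_{2n}(s)|\le\phi(t_0)^{2n}$ and $|U_{2n}(s)|\le (2n+1)\phi(t_0)^{2n+1}$ for $|s|\le t_0$, yielding $\eta_n,\zeta_n\le P(n)\gamma^{2n}$ for some polynomial $P$; hence $\limsup\eta_n^{1/(2n)},\limsup\zeta_n^{1/(2n)}\le\gamma$. For the matching lower bounds, fix $t_1\in(1,t_0)$; since $||h||\in\Supp(\mu_{\tilde h})$, the constant $c:=\mu_{\tilde h}(\{|s|>2\sqrt q\,t_1\})$ is strictly positive. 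Both $T_{2n}$ and $U_{2n}$ are non-negative and monotonically increasing on $[1,\infty)$, with $T_{2n}(t_1)\ge\tfrac12\phi(t_1)^{2n}$; discarding the non-negative contribution on $\{2\sqrt q<|s|\le 2\sqrt q\,t_1\}$ and bounding the oscillatory part on $\{|s|\le 2\sqrt q\}$ trivially, one obtains $\eta_n,\zeta_n\ge c_1\,q^n\phi(t_1)^{2n}-P_1(n)q^n$ for a constant $c_1>0$ and polynomial $P_1$. Taking $(\cdot)^{1/(2n)}$ and then letting $t_1\to t_0$ yields $\liminf\eta_n^{1/(2n)},\liminf\zeta_n^{1/(2n)}\ge\sqrt q\,\phi(t_0)=\gamma$. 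Next, Proposition~\ref{p:xietazetaRelatedToEachOther}(i) gives $\xi_n=\eta_n+(q-1)\sum_{i=1}^{n-1}q^{i-1}\eta_{n-i}$; since $q/\gamma^2<1$ this convolution preserves the growth rate $\gamma^{2n}$, while $\xi_n\ge\eta_n$ provides the matching lower bound, so $\xi_n^{1/(2n)}\to\gamma$. Finally $||h_n||_2^2=\xi_n+(q+1)q^{n-1}$ together with $\gamma>\sqrt q$ implies $||h_n||_2^{1/n}\to\gamma$. The main obstacle is controlling the oscillations of $T_{2n}$ and $U_{2n}$ on $[-1,1]$ in such a way that the tails of $\mu_{\tilde h}$ near $\pm||h||$ dominate; this is resolved by the observation that the oscillations contribute only polynomial-in-$n$ corrections, negligible against the exponential growth $\gamma^{2n}$ supplied by the strictly positive mass of $\mu_{\tilde h}$ arbitrarily close to $\pm||h||$.
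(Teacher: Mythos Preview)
Your proof is correct and follows essentially the same route as the paper: both arguments hinge on the Chebyshev integral formulas of Proposition~\ref{p:etazetaInTermsOfChebyshevPols}, bound the oscillatory part on $[-2\sqrt q,2\sqrt q]$ trivially, and extract the exponential rate from the positive mass of $\mu_{\tilde h}$ near $\pm||h||$. The one organizational shortcut you miss is that the paper invokes the chain $||h_n||_2^2\ge\xi_n\ge\eta_n\ge\zeta_n\ge0$ (Proposition~\ref{p:xietazetabounded}) to reduce everything to just $\limsup ||h_n||_2^{1/n}\le\gamma$ and $\liminf\zeta_n^{1/(2n)}\ge\gamma$, then proves the upper bound first for $\zeta_n$ via $T_{2n}$ alone and propagates it upward through the convolution relations (Lemma~\ref{l:2.7}), thereby avoiding any estimate on $U_{2n}$.
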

\begin{proof}
  By Theorem~\ref{t:LowerboundLeinerSetsUpperBoundAmenability} $$2\sqrt q\le ||h||\le q+1.$$
Hence $||h||^2-4q\ge 0$. Let
\begin{eqnarray}
&&  \gamma=\frac 12(||h||+(||h||^2-4q)^{1/2})\\
&&  \gamma'=\frac 12(||h||-(||h||^2-4q)^{1/2}).
\end{eqnarray}
Then
$$\gamma+\gamma'=||h||\qquad \gamma\gamma'=q\qquad \gamma\ge \gamma'\ge 0$$
Hence
\begin{equation}\label{e:gammagesqrtq}
  \gamma \ge \sqrt q \qquad\text{ and } \qquad \gamma+\frac q{\gamma}=\gamma+\gamma'=||h||.
\end{equation}
The function
$$f(t)=t+\frac q t$$
is strictly increasing on the interval $[\sqrt q, \infty)$, and $f(\gamma)=||h||\le q+1=f(q)$.
Hence $\gamma\le q$, which proves (\ref{e:2.5}).
Assume next that $q\ge 2$ and $Y$ is not a Leinert set.
Then by Theorem~\ref{t:LowerboundLeinerSetsUpperBoundAmenability}, $||h||^2-4q>0$. Hence $\gamma>\gamma'$, which shows that $\gamma> \sqrt q$.
We next prove $(\ref{e:2.6})$. Since $||h_n||_2^2\ge\xi_n\ge \eta_n\ge \zeta_n\ge 0$ (Proposition~\ref{p:xietazetabounded}), it is sufficient to show that
\begin{eqnarray}
&&  \label{e:2.7}\limsup_{n\to\infty} ||h_n||_2^{\frac 1{n}}\le \gamma\\
&&  \label{e:2.8}\liminf_{n\to\infty} \zeta_n^{\frac 1{2n}}\ge \gamma.
\end{eqnarray}
Note that (\ref{e:2.7}) follows immediately from the following lemma:

\begin{lem}\label{l:2.7}

For all $n\ge \N$,
$$\zeta_n\le 3\gamma^{2n},\qquad \eta_n\le 3n\gamma^{2n},\qquad \xi_n\le 3n^2\gamma^{2n},\qquad ||h_n||_2^2\le 5 n^2 \gamma^{2n}.$$
\end{lem}
\begin{proof}
  By Proposition~\ref{p:etazetaInTermsOfChebyshevPols} we have
\begin{equation}\label{e:2.9}
\zeta_n=(q-1)+2 q^{n}\int_{\mathsmaller{-(q+1)}}^{\mathsmaller{q+1}} T_{2n}\left(\frac t{2\sqrt q}\right)\,d\mu_{\tilde h}(t)
\end{equation}
where $\mu_{\tilde h}$ is the spectral distribution of $\tilde h=\left(
                                                                   \begin{array}{cc}
                                                                     0 & h^* \\
                                                                     h & 0 \\
                                                                   \end{array}
                                                                 \right)$.
In particular,
\begin{equation}\label{e:2.10}
  \Supp(\mu_{\tilde h})\subset [-||h||,||h||].
\end{equation}
By  formula (2) in page 184 of \cite{HTF2}
\begin{eqnarray*}
&&  T_k(\cos\theta)=\cos(k \theta),\qquad \theta \in [0,\pi]\\
&& T_k(\cosh u)=\cosh(k u),\qquad  u\ge 0.
\end{eqnarray*}
Hence, $|T_k(t)|\le 1$ for $t\in [-1,1]$, $T_k(1)=1$, and $T_k$ is strictly increasing on $[1,\infty)$.
Since also $T_k(-x)=(-1)^k T_k(x)$, we have
$$\max_{|t|\le t_0} |T_k(t)|=T_k(t_0),\qquad \forall t_0\ge 1.$$
Hence by (\ref{e:2.9}) and (\ref{e:2.10})
\begin{equation}
  \zeta_n\le q-1 +2 q^n T_{2n}(\frac{||h||}{2\sqrt q}).
\end{equation}
By (\ref{e:gammagesqrtq}) $t:=\log\frac \gamma{\sqrt q}\ge 0$ is non negative. Then
$$\cosh t=\frac 12 (\frac {\gamma}{\sqrt q}+ \frac{\sqrt q}{\gamma})=\frac{||h||}{2\sqrt q}$$
which implies that
$$T_{2n}(\frac{||h||}{2\sqrt q})=\cosh (2nt)=\frac 12 ((\frac{\gamma}{\sqrt q})^{2n}+(\frac{\sqrt q}{\gamma})^{2n})\le (\frac {\gamma}{\sqrt q})^{2n}.$$
This proves that
$$\zeta_n\le q-1 +2\gamma^{2n}\le 3\gamma^{2n},\qquad n\in\N$$
where the last inequality follows from the inequality $\gamma\ge \sqrt q$.
From Proposition~\ref{p:xietazetaRelatedToEachOther}($iii$) we get, using again $\gamma\ge \sqrt q$ that
\begin{eqnarray*}
 \eta_n&\le& \zeta_n+q\zeta_{n-1}+q^{2}\zeta_{n-2}+\ldots+q^{n-1}\zeta_1 \\
&\le& 3\gamma^{2n}(1+\frac q{\gamma^2}+\ldots+(\frac q{\gamma^2})^{n-1})\\
&\le& 3n \gamma^{2n}.
\end{eqnarray*}
In particular, $\eta_k\le 3n \gamma^{2k}$, $1\le k\le n$.
Applying Proposition~\ref{p:xietazetaRelatedToEachOther}($i$) to this inequality we get in the same way that $\xi_n\le 3n^2\gamma^{2n}$.
Hence
$$||h_n||_2^2=\xi_n+(q+1)q^{n-1}\le \xi_n+2q^n\le 3n^2\gamma^{2n}+2\gamma^{2n}\le 5n^2\gamma^{2n}.$$
\end{proof}

\proof[Proof of Theorem~$\ref{t:2.6}$ (continued)]
We prove now (\ref{e:2.8}). Recall that $\gamma > \sqrt q$, and let $\gamma_1\in(\sqrt q, \gamma)$ be arbitrary.
Let
$$\alpha:=\gamma_1+\frac q{\gamma_1}.$$
Since $f(\gamma):=\gamma+\frac q{\gamma}$ is strictly increasing on $[\sqrt q, \infty)$, we have
\begin{equation}
  2\sqrt q<\alpha < ||h||.
\end{equation}
Since, by (\ref{e:pmnormTinSupport}), $\pm ||h||\in \Supp(\mu_{\tilde h})$
we also have
$\mu_{\tilde h}([\alpha,||h||])>0.$
We have previously seen that $|T_{2n}(t)|\le 1$ for $t\in[-1,1]$ and that $T_{2n}$ is positive and increasing on $[1,\infty)$. Hence
\begin{eqnarray*}
\zeta_n&=&(q-1)+2 q^{n}\int_{\mathsmaller{-(q+1)}}^{\mathsmaller{q+1}} T_{2n}\left(\frac t{2\sqrt q}\right)\,d\mu_{\tilde h}(t)\\
&\ge& (q-1)+2 q^{n}(\int_{\mathsmaller{-2\sqrt q}}^{\mathsmaller{2 \sqrt q}} (-1)\,d\mu_{\tilde h}(t)+\int_{\alpha}^{||h||} T_{2n}\left(\frac t{2\sqrt q}\right)\,d\mu_{\tilde h}(t))\\
&\ge& q-1-2 q^n+ 2q^n T_{2n}(\frac {\alpha}{2\sqrt q}))\mu_{\tilde h}([\alpha,||h||]).
\end{eqnarray*}
Let now $u:=\log \frac {\gamma_1}{\sqrt q}>0$. Then
$$\cosh u=\frac 12 (\frac {\gamma_1}{\sqrt q}+\frac {\sqrt q}{\gamma_1})=\frac{\alpha}{2\sqrt q}.$$
Hence
$$T_{2n}(\frac {\alpha}{2\sqrt q})=\cosh(2nu)=\frac 12 ((\frac{\gamma_1}{\sqrt q})^{2n} +(\frac{\sqrt q}{\gamma_1})^{2n} )\ge \frac 12 \frac {\gamma_1^{2n}}{q^n}$$
which shows that
$$\zeta_n\ge q-1-2q^n+\gamma_1^{2n}\mu_{\tilde h}([\alpha,||h||]).$$
Since $\gamma_1>\sqrt q$ it follows that
$$\liminf_{n\to\infty} \zeta_n^{1/2n}\ge \gamma_1$$
and since $\gamma_1\in(\sqrt q,\gamma)$ was arbitrary we have
$$\liminf_{n\to\infty} \zeta_n^{1/2n}\ge \gamma$$
proving (\ref{e:2.8}).
\end{proof}

\begin{rem}\label{r:6.8}

  If $Y$ is a Leinert set, then $\gamma=\sqrt q$ by Theorem~\ref{t:LowerboundLeinerSetsUpperBoundAmenability}($iii$).
Hence by Theorem~\ref{t:Leinertxirhozetamnconnection}
$$\lim_{n\to\infty} ||h_n||_2^{\frac1{n}}=\sqrt q=\gamma$$
while
$$\lim_{n\to\infty} \xi_n^{\frac 1{2n}}=\lim_{n\to\infty} \eta_n^{\frac 1{2n}}=\lim_{n\to\infty} \zeta_n^{\frac 1 {2n}}=0.$$
\end{rem}

\begin{cor}\label{c:7.9}

  Let $\Gamma$ be a discrete group and let $Y\subset \Gamma$ be a finite set with $|Y|=q+1$ elements $(q\ge 2)$,
such that $Y^{-1}Y$ generates $\Gamma$. Then the following are equivalent:
\begin{itemize}
  \item [$($i$)$] $\Gamma$ is amenable
  \item [$($ii$)$] $\lim\limits_{n\to\infty} ||h_n||_2^{1/n}=q$
  \item [$($iii$)$] $\lim\limits_{n\to\infty} \xi_n^{\frac 1{2n}}=q$
  \item [$($iv$)$] $\lim\limits_{n\to\infty} \eta_n^{\frac 1{2n}}=q$
  \item [$($v$)$] $\lim\limits_{n\to\infty} \zeta_n^{\frac 1{2n}}=q$.
\end{itemize}
 Note as well that $\Gamma$ is amenable if and only if  $ \lim\limits_{n\to\infty} m_n^{\frac{1}{2n}}=q+1$.
\end{cor}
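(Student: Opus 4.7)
The plan is to reduce the entire corollary to two facts that have already been established in the preceding sections: Kesten--Lehner (Theorem~\ref{t:LowerboundLeinerSetsUpperBoundAmenability}), which identifies amenability of $\Gamma_0=\langle Y^{-1}Y\rangle$ with the condition $\|h\|=q+1$; and Theorem~\ref{t:2.6}, which identifies $\gamma=\tfrac12(\|h\|+\sqrt{\|h\|^2-4q})$ as the common asymptotic exponential rate of the four sequences $\|h_n\|_2$, $\xi_n^{1/2}$, $\eta_n^{1/2}$, $\zeta_n^{1/2}$, whenever $Y$ is not a Leinert set. Since we are told $\langle Y^{-1}Y\rangle=\Gamma$, Theorem~\ref{t:LowerboundLeinerSetsUpperBoundAmenability}(ii) gives $\Gamma$ amenable $\iff \|h\|=q+1$, so it suffices to translate this single scalar identity into each of the four asymptotic conditions (ii)--(v) and into the $m_n$ statement.

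For the translation I first note that the function $f(t)=t+q/t$ is strictly increasing on $[\sqrt q,\infty)$ with $f(q)=q+1$, and the relation $\gamma+q/\gamma=\|h\|$ from Theorem~\ref{t:2.6} together with $\gamma\in[\sqrt q,q]$ therefore gives the key equivalence
\[
\|h\|=q+1\quad\Longleftrightarrow\quad \gamma=q.
\]
Next I split into two cases depending on whether $Y$ is a Leinert set. If $Y$ is a Leinert set, then since $q\ge 2$ we have $\|h\|=2\sqrt q<q+1$, so $\Gamma$ is non-amenable; simultaneously Theorem~\ref{t:Leinertxirhozetamnconnection} together with Remark~\ref{r:6.8} gives $\|h_n\|_2^{1/n}\to\sqrt q\ne q$ and $\xi_n=\eta_n=\zeta_n=0$ for all $n$, so conditions (ii)--(v) all fail. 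Thus in the Leinert case both sides of each equivalence are false, and there is nothing more to do. If $Y$ is not a Leinert set, Theorem~\ref{t:2.6} applies directly: all four of the limits in (ii)--(v) exist and equal $\gamma$, so each of (ii)--(v) is literally the statement $\gamma=q$, which by the displayed equivalence above is $\|h\|=q+1$, which by Theorem~\ref{t:LowerboundLeinerSetsUpperBoundAmenability}(ii) is amenability of $\Gamma$.

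For the final statement about $m_n$, I invoke Proposition~\ref{p:4.1} applied to $T=h$: the sequence $m_n(h^*h)^{1/n}$ increases to $\|h\|^2$, so $m_n^{1/(2n)}\to\|h\|$. Consequently $\lim_n m_n^{1/(2n)}=q+1$ is equivalent to $\|h\|=q+1$, which is again amenability of $\Gamma$ by Theorem~\ref{t:LowerboundLeinerSetsUpperBoundAmenability}(ii). This completes the chain of equivalences.

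The only delicate point is verifying that the Leinert and non-Leinert cases really do cover every scenario cleanly: one must make sure that in the Leinert case the failure of conditions (ii)--(v) matches the failure of amenability (which relies on $q\ge 2$ to exclude the degenerate coincidence $2\sqrt q=q+1$), and in the non-Leinert case one must be sure that Theorem~\ref{t:2.6} actually provides the \emph{limit} (not merely a $\liminf$ or $\limsup$) for each of the four sequences. Both are available from the results quoted, so no further computation is needed; the main obstacle was encapsulated in the proof of Theorem~\ref{t:2.6}, and Corollary~\ref{c:7.9} is essentially its packaged consequence.
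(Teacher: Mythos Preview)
Your proof is correct and follows essentially the same approach as the paper: both reduce the equivalences to Theorem~\ref{t:LowerboundLeinerSetsUpperBoundAmenability}(ii) (amenability $\iff \|h\|=q+1$), Theorem~\ref{t:2.6} (the four limits equal $\gamma$ in the non-Leinert case), Remark~\ref{r:6.8} for the Leinert case, and Proposition~\ref{p:4.1} for the $m_n$ statement. The only difference is organizational---you split upfront into Leinert versus non-Leinert cases, whereas the paper proves the forward and backward implications separately---but the ingredients and logic are identical.
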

\begin{proof}
  If $\Gamma$ is amenable, then by Theorem~\ref{t:LowerboundLeinerSetsUpperBoundAmenability}($ii$),
$||h||=q+1$ and thus $\gamma=\frac12(||h||+\sqrt{||h||^2-4q})=q$.
Since $q\ge 2$, $||h||=q+1>2\sqrt q$. Hence by Theorem~\ref{t:LowerboundLeinerSetsUpperBoundAmenability}($iii$) $Y$ is not a Leinert set,
and $(ii),(iii),(iv),(v)$ in the corollary follows now from formula~(\ref{e:2.6}) in Theorem~\ref{t:2.6}.
Conversely, if one of the statements $(ii)$, $(iii)$, $(iv)$, or $(v)$ in the corollary holds, then by Remark~\ref{r:6.8}, $Y$ is not a Leinert set, so using again formula~(\ref{e:2.6}) in Theorem~\ref{t:2.6} we have $\gamma=q$, and hence by (\ref{e:2.5}), $||h||=\gamma+\frac {q}{\gamma}=q+1$, which by Theorem~\ref{t:LowerboundLeinerSetsUpperBoundAmenability}($ii$) implies that $\Gamma$ is amenable.
By Proposition~\ref{p:4.1} $$\lim \limits_{n\to\infty} m_n^{\frac{1}{2n}}= ||h||,$$ which proves the last statement in the corollary.

\end{proof}

\appendix

\section{\textbf{On Connectedness of spectra of elements in $C_r^*(F)$}}\label{a:3}

In \cite{Far} Farley proved that the Thompson group $F$ admits a proper affine isometric action on a Hilbert space or equivalently, $F$ has the Haagerup property (cf.~\cite{CCJJV}). Hence by the result of Higson and Kasparov \cite{HK}, $F$ satisfies the Baum-Connes conjecture with coefficients, which in turn implies that $F$ satisfies the Kadison-Kaplansky conjecture, i.e. (since $F$ is torsion free) the reduced $C^*$-algebra  $C_r^*(F)$  has no projections other than $0$ or $1$ (see e.g. \cite{Pusch}). Here projections mean self-adjoint idempotents ($p^2=p=p^*$). This implies that the spectrum of every self-adjoint operator $a\in C_r^*(F)$ is a connected subset of $\R$. The following two propositions are simple applications of this:

\begin{pro}[Case 1]\label{p:A1case1}

Let $h=I+A+B$ and $\tilde h=\left(
                              \begin{array}{cc}
                                0 & h^* \\
                                h & 0 \\
                              \end{array}
                            \right)$.
Then there exists a $\delta\in[0,||h||)$ such that
$$\sigma(\tilde h)=\supp(\mu_{\tilde h})=[-||h||,-\delta]\cup [\delta, ||h||].$$
\end{pro}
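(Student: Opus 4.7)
The plan is to exploit the block-diagonal identity
$$\tilde h^2 = \begin{pmatrix} h^*h & 0 \\ 0 & hh^* \end{pmatrix}$$
and reduce the claim to a statement about $\sigma(h^*h)$ and $\sigma(hh^*)$ inside $C_r^*(F)$. Since $\tilde h$ is self-adjoint, the spectral mapping theorem gives $\sigma(\tilde h^2) = \{t^2 : t \in \sigma(\tilde h)\}$, while the block form yields $\sigma(\tilde h^2) = \sigma(h^*h) \cup \sigma(hh^*)$. Combined with the symmetry $\sigma(\tilde h) = -\sigma(\tilde h)$, which follows because $\mu_{\tilde h}$ is symmetric (Section~\ref{s:two}), it suffices to show that $\sigma(h^*h) \cup \sigma(hh^*)$ is a single closed interval $[\alpha, \|h\|^2]$ with $\alpha < \|h\|^2$: then $\sigma(\tilde h) = \{t \in \R : t^2 \in [\alpha, \|h\|^2]\} = [-\|h\|, -\sqrt\alpha] \cup [\sqrt\alpha, \|h\|]$, and we can put $\delta := \sqrt\alpha$. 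The identification $\sigma(\tilde h) = \supp(\mu_{\tilde h})$ is immediate from faithfulness of $\tilde\tau$ on $M_2(L(F))$.

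For the connectedness step, I would invoke directly the Kadison--Kaplansky property of $C_r^*(F)$ stated at the start of the appendix: a unital $C^*$-algebra with no non-trivial projections cannot contain a self-adjoint element whose spectrum is disconnected, since a locally constant $\{0,1\}$-valued continuous function on a disconnected compact subset of $\R$ would yield a non-trivial projection via functional calculus. Applied to the self-adjoint elements $h^*h, hh^* \in C_r^*(F)$, which both have maximum $\|h\|^2$, this yields $\sigma(h^*h) = [\alpha, \|h\|^2]$ and $\sigma(hh^*) = [\beta, \|h\|^2]$ for some $\alpha, \beta \in [0, \|h\|^2]$. I would then use the standard $C^*$-algebraic identity $\sigma(h^*h) \setminus \{0\} = \sigma(hh^*) \setminus \{0\}$ to force $\alpha = \beta$: if $\alpha, \beta > 0$ the equality is immediate; if exactly one of them vanishes, the two sets would be a half-open and a closed interval with different left endpoints, a contradiction. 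This is the one place where a small case analysis is needed, and I would flag it as the main (if mild) obstacle.

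Finally I would rule out $\alpha = \|h\|^2$. Suppose equality; then $\sigma(h^*h) = \{\|h\|^2\}$, forcing $h^*h = \|h\|^2 \cdot I$, whence $\tau(h^*h) = \|h\|^2$. But by formula (\ref{e:tauastara}), $\tau(h^*h) = \|h\|_2^2 = |Y| = 3$, whereas by Corollary~\ref{c:1.4} we have $\|h\|^2 > 8$, a contradiction. Hence $\alpha \in [0, \|h\|^2)$ and the required $\delta = \sqrt\alpha \in [0, \|h\|)$ exists. Assembling the three steps gives the desired description of $\sigma(\tilde h) = \supp(\mu_{\tilde h})$.
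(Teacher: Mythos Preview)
Your proof is correct and follows essentially the same route as the paper: Kadison--Kaplansky gives connectedness of $\sigma(h^*h)$, and $\tau(h^*h)=3<\|h\|^2$ rules out the degenerate case. The only difference is that the paper works directly with the image-measure identity $\mu_{h^*h}=\phi_*(\mu_{\tilde h})$ for $\phi(t)=t^2$ (end of Section~\ref{s:two}), which lets it bypass $hh^*$ entirely and hence avoid your case analysis on $\alpha$ versus $\beta$.
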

\begin{proof}
  Since $h^*h$ is a self-adjoint element of $C_r^*(F)$, its spectrum is a connected subset of $\R$. Hence $\sigma (h^*h)=[\alpha,\beta]$, where
$\alpha=\min(\sigma(h^*h))$ and $\beta=\max(\sigma(h^*h))$. Moreover,
since $h^*h\ge 0$,
$$0\le \alpha\le \beta\qquad\text{and}\qquad \beta=||h^*h||=||h||^2.$$
Since $\mu_{h^*h}$ is the image measure of the symmetric measure $\mu_{\tilde h}$ by the map $t\to t^2$ (see the end of Section~\ref{s:two}), we have
$$\sigma(\tilde h)=\Supp(\mu_{\tilde h})=[-||h||,-\sqrt{\alpha}]\cup [\sqrt{\alpha}, ||h||].$$
Put $\delta=\sqrt{\alpha}$. Then $\delta\in[0,||h||]$.
To see that $\delta< ||h||$, note first that
$$\tau(h^*h)=3$$
because $\{I,A,B\}$ is an orthonormal set with respect to the inner product $\langle a,b\rangle=\tau(b^*a)$. Hence
$$3=\tau(h^*h)=\int_{\alpha}^{\beta} t \, d\mu_{h^*h}(t) \ge \int_{\alpha}^{\beta}\alpha \, d\mu_{h^*h}(t)=\alpha.$$
Hence $\delta=\sqrt \alpha\le \sqrt 3<2\sqrt 2\le ||h||$ by Corollary~\ref{c:1.4}.
\end{proof}

\begin{pro}[Case 2]\label{p:Acase2}

  Let $h=A+A^{-1}+B+B^{-1}$ and $\tilde h=\left(
                                            \begin{array}{cc}
                                              0 & h^* \\
                                              h & 0 \\
                                            \end{array}
                                          \right)$.
\end{pro}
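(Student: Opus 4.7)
The plan is to exploit the crucial difference between Case 1 and Case 2: here $h = A+A^{-1}+B+B^{-1}$ is self-adjoint (since $Y = Y^{-1}$), so $\tilde h$ no longer forces us to pass through the positive operator $h^*h$. Because $h = h^*$ lies in $C_r^*(F)$ and the Kadison--Kaplansky property recalled just before Proposition~\ref{p:A1case1} guarantees that $\sigma(h)$ is a connected subset of $\R$, we get $\sigma(h) = [\alpha, \beta]$ for some $\alpha \le \beta$. My guess at the conclusion is therefore the stronger ``no gap'' statement
\begin{equation*}
\sigma(\tilde h) = \sigma(h) = \supp(\mu_{\tilde h}) = [-\|h\|, \|h\|],
\end{equation*}
rather than a union of two intervals as in Case 1.

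The first step I would carry out is to record that $\mu_{\tilde h} = \mu_h$, which was already observed at the end of Section~\ref{s:fourB}; consequently $\supp(\mu_h)$ is symmetric under $t \mapsto -t$. Since $\tau$ is faithful on $L(F)$ we have $\supp(\mu_h) = \sigma(h)$, so the spectrum of $h$ is itself symmetric around $0$. Combined with connectedness this forces $\sigma(h) = [-r, r]$ for some $r \ge 0$, and by Eq.~(\ref{e:normSABSalphaABSBeta}) we must have $r = \|h\|$. The lower bound $\|h\| \ge 2\sqrt 3$ from Corollary~\ref{c:1.4} ensures $r > 0$, so the interval is nondegenerate (and in particular $\sigma(h)$ is not a finite set, matching Corollary~\ref{c:A4}).

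The second step is to transfer this information from $h$ to $\tilde h$. Since $h^* = h$, we have
\begin{equation*}
\tilde h^2 = \begin{pmatrix} h^2 & 0 \\ 0 & h^2 \end{pmatrix},
\end{equation*}
so $\sigma(\tilde h^2) = \sigma(h^2) = [0, \|h\|^2]$; taking square roots together with the already established symmetry of $\supp(\mu_{\tilde h})$ (see Section~\ref{s:two}) yields $\sigma(\tilde h) = [-\|h\|, \|h\|]$. Alternatively, one may argue directly by block-diagonalizing $\tilde h$ via the unitary $U = \tfrac{1}{\sqrt 2}\bigl(\begin{smallmatrix} 1 & 1 \\ 1 & -1\end{smallmatrix}\bigr)$, which gives $U \tilde h U^* = h \oplus (-h)$ and hence $\sigma(\tilde h) = \sigma(h) \cup (-\sigma(h)) = \sigma(h)$. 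The main obstacle is really just bookkeeping: one must check the equality $\supp(\mu_{\tilde h}) = \sigma(\tilde h)$, which follows from faithfulness of the amplified trace $\tilde\tau$ on $M_2(L(F))$, and verify that the Kadison--Kaplansky hypothesis is applicable at the $C_r^*$-level (not merely in $L(F)$), but both are already in place from the discussion preceding Proposition~\ref{p:A1case1}.
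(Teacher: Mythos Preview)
Your proposal is correct and follows essentially the same approach as the paper: use $\mu_h=\mu_{\tilde h}$, faithfulness of $\tau$ to get $\sigma(h)=\Supp(\mu_h)$, symmetry of $\mu_{\tilde h}$, and connectedness of $\sigma(h)$ from the Kadison--Kaplansky property to conclude $\sigma(h)=[-\|h\|,\|h\|]$. The paper's write-up is slightly shorter because it invokes $\pm\|h\|\in\Supp(\mu_{\tilde h})$ directly from Eq.~(\ref{e:pmnormTinSupport}) rather than arguing symmetry first, and it does not separately compute $\sigma(\tilde h)$ (your ``second step'' is unnecessary once $\Supp(\mu_{\tilde h})=\Supp(\mu_h)=\sigma(h)$ is established).
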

Then
$$\sigma(h)=\Supp(\mu_h)=\Supp(\mu_{\tilde h})=[-||h||,||h||].$$
\begin{proof}
  We know that $\mu_h=\mu_{\tilde h}$ (see end of Section~\ref{s:fourB}).
Hence
$$\sigma(h)=\Supp(\mu_h)=\Supp(\mu_{\tilde h})$$
and from Section~\ref{s:two}, we have
$$\Supp(\mu_{\tilde h})\subset [-||h||,||h||]$$
and
$$\pm ||h||\in \Supp(\mu_{\tilde h}),$$
but since $h=h^*\in C_r^*(F)$, $\sigma(h)$ is a connected subset of $\R$. Hence
$$\Supp(\mu_{\tilde h})=\sigma(h)=[-||h||,||h||],$$
proving the proposition.
\end{proof}

\begin{rem}\label{r:A3}

  By some extra work, it can be proved also for Case 1 that
$$\Supp(\mu_{\tilde h})=[-||h||,||h||].$$
However, in the rest of this section, we shall only need the following immediate corollary of Proposition~\ref{p:A1case1} and Proposition~\ref{p:Acase2}:

\end{rem}

\begin{cor}\label{c:A4}

  In both Case 1 and Case 2, $\sigma(\tilde h)=\Supp(\mu_{\tilde h})$ is an infinite subset of $\R$ without isolated points.
\end{cor}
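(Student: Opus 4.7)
The plan is to derive the corollary as an immediate consequence of the two preceding propositions. First I would recall from Section~\ref{s:two} that $\sigma(\tilde h)=\Supp(\mu_{\tilde h})$ holds in both cases (since the trace $\tilde\tau$ is faithful on $M_2(L(F))$), so it suffices to analyze the support of $\mu_{\tilde h}$.

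In Case~1, Proposition~\ref{p:A1case1} gives
\[
\Supp(\mu_{\tilde h})=[-\|h\|,-\delta]\cup[\delta,\|h\|]
\]
with $0\le \delta<\|h\|$. Since $\|h\|>0$, the second interval $[\delta,\|h\|]$ is a non-degenerate closed interval; hence $\Supp(\mu_{\tilde h})$ contains an arc, is uncountable (in particular infinite), and every one of its points is an accumulation point of the set (each point of $[\delta,\|h\|]$ or $[-\|h\|,-\delta]$ is a limit of nearby points of the same interval).

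In Case~2, Proposition~\ref{p:Acase2} gives $\Supp(\mu_{\tilde h})=[-\|h\|,\|h\|]$, which again is a non-degenerate closed interval (since $\|h\|\ge 2\sqrt{3}>0$ by Corollary~\ref{c:1.4}), so the same conclusion holds.

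The only thing worth noting is that there is no real obstacle here: the whole content has been packed into the two propositions, which in turn rely on the Kadison--Kaplansky property of $C^*_r(F)$ (via Farley--Higson--Kasparov). Once one records that a union of finitely many non-degenerate closed intervals is automatically infinite and perfect (i.e.\ has no isolated points), the corollary follows immediately in both cases.
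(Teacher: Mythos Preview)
Your proposal is correct and is exactly the intended argument: the paper itself presents Corollary~\ref{c:A4} as an immediate consequence of Propositions~\ref{p:A1case1} and~\ref{p:Acase2} (see Remark~\ref{r:A3}) without spelling out a separate proof. Your observation that a union of finitely many non-degenerate closed intervals is perfect is precisely what makes the corollary ``immediate.''
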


Let $H$ be a Hilbert space, and let $K(H)\subset B(H)$ be the set of compact operators on $H$ and let $\rho$ denote the quotient map $\rho:B(H)\to B(H)/K(H)$.
Then the essential spectrum $\sigma_{\mathrm{ess}}(T)$ of an element $T\in B(H)$ is the spectrum $\sigma(\rho(T))$ of $\rho(T)$, see e.g. p. 30 in \cite{Mur}. Note that $\sigma_{\mathrm{ess}}(T)\subset \sigma(T)$ and for every $K\in K(H)$, $\sigma_{\mathrm{ess}}(T+K)=\sigma_{\mathrm{ess}}(T)$. For a self-adjoint operator $S=S^*\in B(H)$, it is easy to see that $\lambda\in \sigma_{\mathrm{ess}}(S)$ if and only if  for every $\varepsilon>0$, the spectral projection $1_{(\lambda-\varepsilon,\lambda+\varepsilon)}(S)$ of $S$ is infinite dimensional. The following result is well known (cf.~Theorem VII.10 in \cite{RS}).

\begin{pro}\label{p:A5}

  Let $S=S^*\in B(H)$. Then $\lambda\in \sigma(S)\backslash\sigma_{\mathrm{ess}}(S)$ if and only if the following two conditions hold
\begin{itemize}
  \item [$($i$)$] $\lambda$ is an isolated point in $\sigma(S)$.
  \item [$($ii$)$] $\lambda$ is an eigenvalue of $S$ and $\mathrm{dim}(\ker(S-\lambda I))<\infty$.
\end{itemize}
\end{pro}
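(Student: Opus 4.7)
The plan is to use the spectral projection criterion stated just before the proposition: $\lambda\in\sigma_{\mathrm{ess}}(S)$ iff $P_\varepsilon:=1_{(\lambda-\varepsilon,\lambda+\varepsilon)}(S)$ has infinite-dimensional range for every $\varepsilon>0$. Equivalently, $\lambda\notin\sigma_{\mathrm{ess}}(S)$ iff there exists some $\varepsilon_0>0$ for which $P_{\varepsilon_0}$ has finite-dimensional range. Both directions of the proposition will follow from analyzing $S$ on the range of $P_{\varepsilon_0}$.

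For the forward direction, assume $\lambda\in\sigma(S)\setminus\sigma_{\mathrm{ess}}(S)$ and pick $\varepsilon_0>0$ with $\dim(P_{\varepsilon_0} H)<\infty$. Since $S$ commutes with $P_{\varepsilon_0}$, the restriction $S_0:=S|_{P_{\varepsilon_0}H}$ is a self-adjoint operator on a finite-dimensional space, hence has finitely many eigenvalues $\lambda_1,\dots,\lambda_k$, all in $[\lambda-\varepsilon_0,\lambda+\varepsilon_0]$. I would then argue that $\sigma(S)\cap(\lambda-\varepsilon_0,\lambda+\varepsilon_0)=\{\lambda_1,\dots,\lambda_k\}$: for any $\mu$ in the open interval outside this finite set, one finds $\delta>0$ small enough that $1_{(\mu-\delta,\mu+\delta)}(S)\le P_{\varepsilon_0}$ has range orthogonal to each eigenspace of $S_0$, hence equals $0$, so $\mu\notin\sigma(S)$. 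This gives (i). Since $\lambda\in\sigma(S)$, it must be one of the $\lambda_j$; because $\lambda$ is now isolated in $\sigma(S)$, there is $\delta>0$ with $1_{(\lambda-\delta,\lambda+\delta)}(S)=1_{\{\lambda\}}(S)$, and this projection equals the orthogonal projection onto $\ker(S-\lambda I)$, which is nonzero and of dimension at most $\dim(P_{\varepsilon_0}H)<\infty$, giving (ii).

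For the converse, assume (i) and (ii). By (i) choose $\varepsilon_0>0$ with $\sigma(S)\cap(\lambda-\varepsilon_0,\lambda+\varepsilon_0)=\{\lambda\}$. Then by the spectral theorem $P_{\varepsilon_0}=1_{(\lambda-\varepsilon_0,\lambda+\varepsilon_0)}(S)=1_{\{\lambda\}}(S)$, and this projection coincides with the orthogonal projection onto $\ker(S-\lambda I)$, which by (ii) has finite-dimensional range. Hence $\lambda\notin\sigma_{\mathrm{ess}}(S)$ by the criterion, while $\lambda\in\sigma(S)$ by hypothesis.

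There is no real obstacle here; the only delicate point is the identification $1_{(\lambda-\delta,\lambda+\delta)}(S)=1_{\{\lambda\}}(S)=$ projection onto $\ker(S-\lambda I)$ when $\lambda$ is isolated in $\sigma(S)$, which is an immediate consequence of functional calculus for self-adjoint operators (applied to the characteristic function of $\{\lambda\}$, which coincides with that of $(\lambda-\delta,\lambda+\delta)$ on $\sigma(S)$). Everything else is bookkeeping with the spectral measure and the fact that $P_\varepsilon$ is decreasing in $\varepsilon$.
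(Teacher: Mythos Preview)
Your argument is correct. Note, however, that the paper does not actually supply a proof of this proposition: it is stated as a well-known result with a reference to Theorem~VII.10 in Reed--Simon, and no proof is given. Your write-up therefore goes beyond what the paper does, providing a clean self-contained verification via the spectral-projection criterion that the paper quotes just before the statement. The identification $1_{(\lambda-\delta,\lambda+\delta)}(S)=1_{\{\lambda\}}(S)=$ projection onto $\ker(S-\lambda I)$ for isolated $\lambda$ is indeed the only point requiring a word of justification, and you handle it correctly.
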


\begin{cor}\label{c:A6}

  If $S=S^*\in B(H)$ and $\sigma(S)$  has no isolated points then $\sigma_{\mathrm{ess}}(S)=\sigma(S)$.
\end{cor}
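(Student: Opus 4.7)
The plan is to deduce this immediately from Proposition~\ref{p:A5}, since the hypothesis rules out condition (i) in that proposition's characterization of $\sigma(S)\setminus\sigma_{\mathrm{ess}}(S)$.

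First I would recall the general containment $\sigma_{\mathrm{ess}}(S)\subseteq\sigma(S)$, which holds for every $S\in B(H)$ (and in particular for self-adjoint $S$) and was noted right before Proposition~\ref{p:A5}. So only the reverse inclusion requires argument.

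Next, suppose for contradiction that there is some $\lambda\in\sigma(S)\setminus\sigma_{\mathrm{ess}}(S)$. Then by Proposition~\ref{p:A5}, $\lambda$ satisfies condition (i), namely $\lambda$ is an isolated point of $\sigma(S)$. This directly contradicts the hypothesis that $\sigma(S)$ has no isolated points. Hence $\sigma(S)\setminus\sigma_{\mathrm{ess}}(S)=\emptyset$, i.e.\ $\sigma(S)\subseteq\sigma_{\mathrm{ess}}(S)$, which combined with the reverse inclusion yields $\sigma_{\mathrm{ess}}(S)=\sigma(S)$.

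There is essentially no obstacle here; condition (ii) of Proposition~\ref{p:A5} plays no role in the argument, and the whole proof is a one-line application of the contrapositive of the "only if" direction of Proposition~\ref{p:A5}. The point of recording the corollary is simply that it will later be applied to $S=\tilde h$ in both Case 1 and Case 2, where Corollary~\ref{c:A4} guarantees precisely that $\sigma(\tilde h)=\Supp(\mu_{\tilde h})$ has no isolated points.
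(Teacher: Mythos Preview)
Your proof is correct and is exactly the intended argument: the paper states Corollary~\ref{c:A6} without proof precisely because it follows immediately from Proposition~\ref{p:A5} in the way you describe. One small inaccuracy in your concluding remark: the corollary is actually applied in the proof of Proposition~\ref{p:A7} with $S=M$ (and its compact perturbations $Q_nMQ_n$), not directly with $S=\tilde h$; the link to $\tilde h$ comes only afterwards in Corollary~\ref{c:A8} via the unitary equivalence $\sigma(M)=\sigma(\tilde h)$.
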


\begin{pro}\label{p:A7}

  Let $M\in B(\ell^2(\N_0))$ be an operator of the form
 \begin{equation}
    M=\left(
     \begin{array}{ccccc}
       0 & \alpha_1 & 0 & 0 &\cdots \\
       \alpha_1 & 0 & \alpha_2 & 0 &\cdots \\
       0 &  \alpha_2 &  0 & \alpha_3 &\cdots\\
       0 & 0 &  \alpha_3 &  0 &\cdots\\
       \vdots & \vdots & \vdots &\vdots & \ddots \\
     \end{array}
   \right)
  \end{equation}
where $\alpha_n\ge 0$ for all $n\in \N$ and assume that $\sigma(M)$ has no isolated points. Then
\begin{equation}\label{e:limsupimprovement}
  \liminf_{n\to\infty}(\alpha_{n-1}+\alpha_n)\le ||M||\le \limsup_{n\to\infty}(\alpha_{n-1}+\alpha_n).
\end{equation}
\end{pro}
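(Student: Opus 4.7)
The lower bound $\liminf_{n\to\infty}(\alpha_{n-1}+\alpha_n)\le \|M\|$ is already contained in Proposition~\ref{p:4.4}, whose proof does not use any assumption on $\sigma(M)$; so the task is to upgrade the upper bound from the crude $\sup_{n\ge 2}(\alpha_{n-1}+\alpha_n)$ of Proposition~\ref{p:4.4} to the $\limsup$. The idea is that only the ``tail'' of the matrix can contribute to the essential spectrum, and the hypothesis that $\sigma(M)$ has no isolated points forces $\|M\|$ to be governed entirely by $\sigma_{\mathrm{ess}}(M)$.

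Set $a:=\limsup_{n\to\infty}(\alpha_{n-1}+\alpha_n)$ and fix $\varepsilon>0$. Choose $N\in\N$ so large that $\alpha_{n-1}+\alpha_n\le a+\varepsilon$ for every $n\ge N$. I would then split $\ell^2(\N_0)=H_1\oplus H_2$ with $H_1=\ell^2(\{0,\ldots,N-1\})$ and $H_2=\ell^2(\{N,N+1,\ldots\})$, and write
$$M=(M_1\oplus M_2)+K,$$
where $M_1$ and $M_2$ are the tridiagonal restrictions of $M$ to $H_1$ and $H_2$, and $K$ is the rank--$2$ (hence compact) operator carrying the two cross--entries $\alpha_N$ at positions $(N-1,N)$ and $(N,N-1)$. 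A Schur test on $M_2$ with constant weight $1$ (as in the second half of the proof of Proposition~\ref{p:4.4}) gives
$$\|M_2\|\le\sup\bigl\{\alpha_{N+1},\,\alpha_{N+1}+\alpha_{N+2},\,\alpha_{N+2}+\alpha_{N+3},\ldots\bigr\}\le a+\varepsilon,$$
where for $\alpha_{N+1}$ alone one uses $\alpha_{N+1}\le \alpha_{N+1}+\alpha_{N+2}\le a+\varepsilon$.

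Next I would combine compactness of $K$ with the finite rank of $M_1\oplus 0$ to conclude, by the standard stability of the essential spectrum, that
$$\sigma_{\mathrm{ess}}(M)=\sigma_{\mathrm{ess}}(M_1\oplus M_2)=\sigma_{\mathrm{ess}}(M_2)\subset\sigma(M_2)\subset[-(a+\varepsilon),\,a+\varepsilon].$$
At this point Corollary~\ref{c:A6}, applied via the hypothesis that $\sigma(M)$ has no isolated points, gives $\sigma_{\mathrm{ess}}(M)=\sigma(M)$, whence $\|M\|=\max\{|t|:t\in\sigma(M)\}\le a+\varepsilon$. Letting $\varepsilon\to 0$ finishes the proof.

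The main conceptual step (and the only place the extra hypothesis enters) is the identification $\sigma(M)=\sigma_{\mathrm{ess}}(M)$: without it, the finite-rank/compact perturbation $M_1\oplus 0+K$ could in principle create an isolated eigenvalue of $M$ outside the band $[-(a+\varepsilon),a+\varepsilon]$, and the Schur bound on $\|M_2\|$ would only control $\sigma_{\mathrm{ess}}(M)$ rather than $\|M\|$. The remaining ingredients, namely the two stability statements for $\sigma_{\mathrm{ess}}$ under compact and under finite-rank-summand perturbations, are standard and should not present any obstacle.
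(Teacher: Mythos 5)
Your proposal is correct and follows essentially the same route as the paper: cut off the first $N$ rows and columns (a finite--rank, hence compact, perturbation), bound the tail tridiagonal block by the Schur test with constant weights, and use the no-isolated-points hypothesis via Corollary~\ref{c:A6} to identify $\sigma(M)$ with $\sigma_{\mathrm{ess}}(M)$, which is unaffected by the cut-off. The paper merely packages the same ingredients as the norm identity $\|Q_nMQ_n\|=\|M\|$ for the tail compressions before applying the Schur bound, so the difference is only one of bookkeeping.
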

\begin{proof}
  The inequality on the left hand side follows from the proof of Proposition~\ref{p:4.4} (without any assumption on $\sigma(M)$).
Put
\begin{equation}
    N_n=\left(
     \begin{array}{ccccc}
       0 & \alpha_{n+1} & 0 & 0 &\cdots \\
       \alpha_{n+1} & 0 & \alpha_{n+2} & 0 &\cdots \\
       0 &  \alpha_{n+2} &  0 & \alpha_{n+3} &\cdots\\
       0 & 0 &  \alpha_{n+3} &  0 &\cdots\\
       \vdots & \vdots & \vdots &\vdots & \ddots \\
     \end{array}
   \right),\qquad n\in\N.
\end{equation}
Then
$||N_n||=||Q_nMQ_n||\le ||M||$, where $Q_n$  denotes the projection on $\Span\{\delta_k\mid k\ge n\}$, and $(\delta_k)_{k=0}^\infty$  is the standard basis for $\ell^2(\N_0)$.
But since $Q_n M Q_n$ is a compact perturbation of $M$, $\sigma_{\mathrm{ess}}(Q_nMQ_n)=\sigma_{\mathrm{ess}}(M)$ so by Corollary~\ref{c:A6}
$$\sigma(Q_n M Q_n)\supset \sigma_{\mathrm{ess}}(Q_n MQ_n)=\sigma_{\mathrm{ess}}(M)=\sigma(M),$$
and since the norm of a self-adjoint operator is equal to its spectral radius, also
$$||Q_n M Q_n||\ge ||M||,\qquad n\in \N,$$
so altogether,
$$||N_n||=||Q_nMQ_n||=||M||,\qquad n\in\N.$$
By the proof of Proposition~\ref{p:4.4},
$$||M||=||N_n||\le \sup\{\alpha_{k-1}+\alpha_k\mid k\ge n+2\}.$$
Hence
$$||M||\le \limsup_{n\to\infty}(\alpha_{n-1}+\alpha_n).$$
\end{proof}

\begin{cor}\label{c:A8}

  Let $h=I+A+B$ (case 1) or $h=A+A^{-1}+B+B^{-1}$ (case 2) and let
 \begin{equation}
    M=\left(
     \begin{array}{ccccc}
       0 & \alpha_1 & 0 & 0 &\cdots \\
       \alpha_1 & 0 & \alpha_2 & 0 &\cdots \\
       0 &  \alpha_2 &  0 & \alpha_3 &\cdots\\
       0 & 0 &  \alpha_3 &  0 &\cdots\\
       \vdots & \vdots & \vdots &\vdots & \ddots \\
     \end{array}
   \right)
  \end{equation}
be the operator in $B(\ell^2(\N_0))$ built from $\mu_{\tilde h}$ as in Section~\ref{s:four}, then
\begin{equation}\label{e:90}
  \liminf_{n\to\infty}(\alpha_{n-1}+\alpha_n)\le ||M||\le \limsup_{n\to\infty}(\alpha_{n-1}+\alpha_n).
\end{equation}
\end{cor}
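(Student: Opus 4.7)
\textbf{Proof plan for Corollary~\ref{c:A8}.} The strategy is to reduce the statement directly to Proposition~\ref{p:A7}, whose conclusion is exactly the desired pair of inequalities under the single hypothesis that $\sigma(M)$ has no isolated points. Thus everything boils down to identifying $\sigma(M)$ with $\Supp(\mu_{\tilde h})$ and invoking the connectedness/no-isolated-points information already established in Corollary~\ref{c:A4}.

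First I would recall from the proof of Proposition~\ref{p:4.3} that $M$ is unitarily equivalent, via the unitary $U:\ell^2(\N_0)\to L^2(\mu_{\tilde h})$ sending $\delta_n$ to $p_n$, to the multiplication operator $m_t:f(t)\mapsto tf(t)$ on $L^2(\mu_{\tilde h})$. Since the spectrum of a multiplication operator by a real-valued bounded measurable function $\varphi$ on $L^2(\mu)$ is the essential range of $\varphi$, and here $\varphi(t)=t$ with $\mu=\mu_{\tilde h}$ compactly supported, one has $\sigma(m_t)=\Supp(\mu_{\tilde h})$. By unitary invariance of the spectrum, $\sigma(M)=\Supp(\mu_{\tilde h})$.

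Next I would quote Corollary~\ref{c:A4}, which states that in both Case 1 and Case 2, $\Supp(\mu_{\tilde h})=\sigma(\tilde h)$ is an infinite subset of $\R$ without isolated points. Combined with the previous paragraph, this gives that $\sigma(M)$ has no isolated points. Since the matrix $M$ has exactly the tridiagonal form with nonnegative off-diagonal entries $\alpha_n\ge 0$ required in Proposition~\ref{p:A7} (the nonnegativity follows from formula~(\ref{e:4.7}) together with the positivity of the determinants $D_n$ in~(\ref{e:4.2})), the hypotheses of Proposition~\ref{p:A7} are met, and applying it yields
\begin{equation*}
  \liminf_{n\to\infty}(\alpha_{n-1}+\alpha_n)\le ||M||\le \limsup_{n\to\infty}(\alpha_{n-1}+\alpha_n),
\end{equation*}
which is precisely~(\ref{e:90}).

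There is essentially no obstacle, since all the substantive work has already been done: the connectedness/no-isolated-points property of the spectrum was extracted from the Kadison--Kaplansky property of $C_r^*(F)$ in Propositions~\ref{p:A1case1} and~\ref{p:Acase2}, and the Schur-test argument giving the two inequalities was carried out in Proposition~\ref{p:A7}. The only mild point to verify is the identification $\sigma(M)=\Supp(\mu_{\tilde h})$, which is immediate from the unitary equivalence to $m_t$ on $L^2(\mu_{\tilde h})$; this is standard and requires no estimate specific to the Thompson group.
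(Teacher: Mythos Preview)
Your proposal is correct and follows essentially the same approach as the paper's own proof: identify $\sigma(M)=\sigma(m_t)=\Supp(\mu_{\tilde h})$ via the unitary equivalence from Proposition~\ref{p:4.3}, invoke Corollary~\ref{c:A4} to ensure there are no isolated points, and then apply Proposition~\ref{p:A7}. The paper's proof is more terse but the logical content is identical.
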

\begin{proof}
By the proof of Proposition~\ref{p:4.3}, $M$ is unitarily equivalent to the multiplication operator $m_t$ on $L^2(\mu_{\tilde h})$ given by $f(t)\mapsto tf(t)$, $f\in L^2(\mu_{\tilde h})$.
Hence $\sigma(M)=\sigma(m_t)=\Supp(\mu_{\tilde h})=\sigma(\tilde h)$. Formula~(\ref{e:90}) now follows from Proposition~\ref{p:A7} and Corollary~\ref{c:A4}.
\end{proof}

\section{\textbf{A number theoretical Test}}\label{a:1}

This section contains a proof of the test for computational errors mentioned at the end of Section~\ref{s:three}.
Let $\Gamma$ be a group and $Y\subset \Gamma$ a finite subset of $\Gamma$ with $|Y|=q+1$ elements $(q\ge 2)$.
Moreover, let $m_n$, $||h_||_2^2$, $\xi_n$, $\eta_n$, $\zeta_n$ be the sequences of numbers introduced in Sections~\ref{s:three} and \ref{s:five}.
Then the following holds:

\begin{thm}\label{t:B1}

\begin{itemize}
  \item [$($i$)$] $\xi_1=\eta_1=\zeta_1=0$ and $m_1=||h_1||_2^2=q+1$.
  \item [$($ii$)$]  For $n\ge 2$ the numbers $\xi_n$, $\eta_n$, $\zeta_n$, $||h_n||_2^2$ are all even integers, while the $m_n$ numbers have the same parity as the number $q+1$.
  \item [$($iii$)$]   Let $\mu:\N\to\{-1,0,1\}$ denote the M\"obius function, i.e. $\mu(1):=1$ and for $n\ge2$,
$$\mu(n):=\left\{
                  \begin{array}{ll}
                    (-1)^k & \hbox{if $n=p_1p_2\cdots p_k$ (product of $k$ distinct prime numbers),} \\
                    0 & \hbox{otherwise.}
                  \end{array}
                \right.$$
If $\Gamma$ is torsion free, then the numbers
\begin{equation}
  \zeta'_n:=\sum_{\substack{d|n}} \mu(\frac nd)\zeta_d, \qquad n\in\N
\end{equation}
 are non-negative integers divisible by $2n$. In particular,
if $n=p$ is a prime number, then $\zeta_p=\zeta'_p$ and hence $\zeta_p$ is divisible by $2p$.
\end{itemize}
\end{thm}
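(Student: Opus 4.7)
For part $(i)$ I would just evaluate the definitions: $\|h_1\|_2^2 = q+1$ since $h_1 = h$ is a sum of $q+1$ distinct group elements; $m_1 = \tau(h^*h) = q+1$ by the same count; for $(s_1,s_2) \in E_2$ one has $s_1 \ne s_2$, hence $s_1^{-1}s_2 \ne e$ and $\eta_1 = \tau(h_2) = 0$; since $\tilde E_2 = E_2$ we also get $\zeta_1 = 0$; and $\xi_1 = \|h_1\|_2^2 - (q+1)q^0 = 0$.

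For part $(ii)$ the strategy is to realise each quantity as the cardinality of a finite set and exhibit an involution whose fixed-point set determines its parity. The number $\|h_n\|_2^2$ equals the count of pairs $(\alpha,\beta) \in E_n \times E_n$ producing the same group element via the map $f_n$ of \eqref{e:hn}, and the swap $(\alpha,\beta) \mapsto (\beta,\alpha)$ fixes exactly the diagonal of size $|E_n| = (q+1)q^{n-1}$; for $n \ge 2$ this is even, so both $\|h_n\|_2^2$ and $\xi_n = \|h_n\|_2^2 - (q+1)q^{n-1}$ are even. For $\eta_n$ and $\zeta_n$ I would use the reversal $(s_1,\ldots,s_{2n}) \mapsto (s_{2n},\ldots,s_1)$, which sends the group element to its inverse and so preserves the condition $f_{2n}(\alpha) = e$; any fixed point would be a palindrome forcing $s_n = s_{n+1}$, contradicting $E_{2n}$. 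Finally, for $m_n$ I would view the count as indexed by tuples $((s_i,t_i))_{i=1}^n \in (Y \times Y)^n$ with $\prod_i s_i^{-1}t_i = e$, and apply the involution $((s_i,t_i))_i \mapsto ((t_{n+1-i},s_{n+1-i}))_i$: its fixed points (those with $s_i = t_{n+1-i}$) are parametrised by $(q+1)^n$ free choices and automatically satisfy $\prod_i s_i^{-1}t_i = e$ by a telescoping check. Hence $m_n \equiv (q+1)^n \equiv q+1 \pmod 2$.

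For part $(iii)$ the plan is an orbit-counting argument combined with M\"obius inversion. Let $G_n = \{w \in \tilde E_{2n} : g(w) = e\}$ where $g(w) = s_1^{-1}s_2 \cdots s_{2n-1}^{-1}s_{2n}$, so $|G_n| = \zeta_n$. The shift $\sigma(s_1,\ldots,s_{2n}) = (s_3,\ldots,s_{2n},s_1,s_2)$ preserves $\tilde E_{2n}$ and satisfies $g(\sigma w) = (s_1^{-1}s_2)^{-1} g(w)(s_1^{-1}s_2)$, so $\sigma$ generates a $\Z/n\Z$-action on $G_n$. If $w \in G_n$ has $\sigma$-period $d \mid n$, it is the $(n/d)$-fold concatenation of some $u \in \tilde E_{2d}$ with $g(w) = g(u)^{n/d}$; here I would use torsion-freeness of $\Gamma$ to conclude $g(u) = e$, so that $u$ is a primitive element of $G_d$. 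Writing $P_d$ for the count of such primitive elements, this gives $\zeta_n = \sum_{d \mid n} P_d$, and M\"obius inversion yields $\zeta'_n = P_n \ge 0$; since each primitive $\sigma$-orbit has size exactly $n$, one gets $n \mid \zeta'_n$ for free.

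To gain the remaining factor of $2$, I would introduce the reversal $\rho(s_1,\ldots,s_{2n}) = (s_{2n},\ldots,s_1)$, which preserves $G_n$ (because $g(\rho w) = g(w)^{-1}$), conjugates $\sigma$ to $\sigma^{-1}$, and so permutes primitive $\sigma$-orbits. The hard step, which I expect to be the main obstacle, is to show that $\rho$ sends every such orbit to a \emph{different} one, equivalently that each composite $\sigma^{-k}\rho$ has no fixed point in $\tilde E_{2n}$. A fixed point $w$ would satisfy $w_j = w_{2n+1+2k-j}$ for all $j$ (indices mod $2n$), and the associated index involution always admits a cyclically consecutive pair $\{j,j+1\}$: the congruence $2j \equiv 2k \pmod{2n}$ is solvable because both sides are even. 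Such a pair forces $w_j = w_{j+1}$ (or the cyclic analogue $w_{2n} = w_1$), contradicting membership in $\tilde E_{2n}$. Granted this, $\rho$ pairs up primitive $\sigma$-orbits, their number is even, and $\zeta'_n \in 2n\N_0$. Finally, when $n = p$ is prime, $\zeta'_p = \zeta_p - \zeta_1 = \zeta_p$ by part $(i)$.
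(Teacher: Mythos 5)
Your proposal is correct, and for part $(iii)$ — the substantial part of the theorem — it is essentially the paper's argument in a lightly reorganized form: the paper lets the full dihedral group generated by the $2$-step rotation and the reversal act on $R_n=\{s\in\tilde E_{2n}: g(s)=e\}$, shows every point stabilizer lies in the cyclic part (the same fixed-point-freeness of the reflections that you prove by solving $2j\equiv 2k \pmod{2n}$, the paper by comparing the $k$'th entries of the two tuples), uses torsion-freeness exactly as you do to identify period-$d$ elements with primitive elements of $R_d$, applies M\"obius inversion, and gets $2n\mid\zeta'_n$ from the fact that the remaining dihedral orbits have size $2n$ — which is the same thing as your pairing of size-$n$ shift-orbits by the reversal. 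Where you genuinely diverge is part $(ii)$: the paper only runs the reversal involution on $\tilde E_{2n}$ to get $\zeta_n$ even, then propagates evenness to $\eta_n$, $\xi_n$, $\|h_n\|_2^2$ through Proposition~\ref{p:xietazetaRelatedToEachOther} and Eq.~(\ref{e:xirhozetaN}), and disposes of the parity of $m_n$ by citing (\ref{e:mqn}) and (\ref{e:mnfromzetas}) (which tacitly requires checking $m_n^{(q)}\equiv q+1\pmod 2$, e.g.\ via $2\mid\binom{2n}{n}$). Your direct involutions — the swap on pairs in $E_n\times E_n$ for $\|h_n\|_2^2$, the reversal on $E_{2n}$ for $\eta_n$, and especially the map $((s_i,t_i))_i\mapsto((t_{n+1-i},s_{n+1-i}))_i$ on $Y^{2n}$ whose $(q+1)^n$ fixed points all telescope to $e$ — give a self-contained combinatorial proof of all the parity claims that bypasses the moment formulas entirely; the paper's route is shorter given the machinery of Section~\ref{s:five}, while yours is more elementary and makes the $m_n\equiv q+1\pmod 2$ statement transparent.
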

\begin{proof}
$(i)$:  By definition
$$\eta_1=|\{(s_1,s_2)\in Y^2\mid s_1\ne s_2, s_1^{-1}s_2=e\}|=0.$$
Hence, by Propositions~\ref{p:xietazetaRelatedToEachOther} and (\ref{e:xirhozetaN}), $\xi_1=\zeta_1=0$ and $m_1=||h_1||_2^2=q+1$.\\
$(ii)$: Recall that for $n\in\N$:
$$\tilde E_{2n}=\{(s_1,\ldots,s_{2n})\in Y^{2n}\mid s_1\ne s_2\ne \ldots\ne s_{2n}\ne s_1\}.$$
and  $\zeta_n=|R_n|$ where
$$R_n:=\{(s_1,\ldots,s_{2n})\in \tilde E_{2n}\mid s_1^{-1}s_2\cdots s_{2n-1}^{-1}s_{2n}=e\}.$$
Define the reversing map $\sigma:\tilde E_{2n}\to\tilde E_{2n}$ by
$$\sigma(s_1,\ldots,s_{2n}):=(s_{2n},\ldots,s_1).$$
Then $\sigma^2=id$ and $\sigma(R_n)=R_n$.
Moreover, for each $s=(s_1,\ldots,s_{2n})\in \tilde E_{2n}$, $\sigma(s)\ne s$, because $s_1\ne s_{2n}$.
Hence, all the orbits in $\tilde E_{2n}$  under the action of the group $\{id, \sigma\}\cong \Z_2$ have size 2.
Since $\sigma(R_n)=R_n$, $\zeta_n=|R_n|$ is an even number.
Thus, by Proposition~\ref{p:xietazetaRelatedToEachOther}, $\eta_n$ and $\xi_n$ are also even. Hence for $n\ge 2$,
$$||h_n||_2^2=\xi_n+(q+1)q^{n-1}$$ is also even.
The last statement in $(ii)$ follows from (\ref{e:mqn}) and (\ref{e:mnfromzetas}).\\
($iii$): Let the reversing map $\sigma:\tilde E_{2n}\to \tilde E_{2n}$  and $R_n$ be as in the proof of ($ii$), and define $\rho:\tilde E_{2n}\to \tilde E_{2n}$ by
$$\rho(s_1,\ldots, s_{2n}):=(s_3,s_4,\ldots,s_{2n}, s_1, s_2).$$
Then clearly $\rho(R_n)=R_n$, $\rho^n=id$, $\sigma\rho\sigma=\rho^{-1}$, and the group $G$ of transformations of $\tilde E_{2n}$ generated by $\sigma$ and $\rho$ is equal to
$$G=H\sqcup \sigma H$$
where $H=\{id,\rho,\rho^2,\ldots,\rho^{n-1}\}\cong \Z_n$.
Moreover, $G\cong \Z_n\rtimes\Z_2$, the dihedral group with $2n$ elements. We prove next the following claim:
For each $s=(s_1,\ldots,s_{2n})\in \tilde E_{2n}$, the stabilizer group
$$G_s=\{g\in G\mid gs=s\}$$
is contained in $H$.\\
To prove the claim, we just have to show that $\sigma H\cap G_s=\emptyset$.
From the proof of $(ii)$, we already know that $\sigma\not\in G_s$. Let now $k\in\{1,\ldots, n-1\}$ and assume that $\sigma\rho^k\in G_s$.
Then
$$(s_{2k},s_{2k-1},\ldots, s_1,s_{2n},s_{2n-1},\ldots,s_{2k+1})=(s_1,\ldots,s_{2n}).$$
By comparing the $k$'th terms of the two tuples, we get $s_{k+1}=s_k$, which is a contradiction. Hence $\sigma H\cap G_s=\emptyset$ proving the claim.

Since $G_s$ is a subgroup of $H\cong \Z_n$, $G_s$ is of the form
$$H_d=\{\rho^k\mid \text{ $k$ is a multiple of $d$}\},$$
where $d\in\{1,\ldots, n\}$ is a divisor of $n$. Note that $|H_d|=\frac nd$.
In particular $H_n=\{id\}$.
Assume now that $\Gamma$ is torsion free and write
$$R_n=\bigsqcup_{d\mid n} R_n^d,$$
where $R_n^d=\{s\in R_n\mid G_s=H_d\}.$
If $s\in R_n^d$, for a divisor $d$ of $n$, for which $d<n$, then
\begin{equation}\label{e:apendixBestar}
  s=(s_1,\ldots,s_{2d},s_1,\ldots,s_{2d},\ldots, s_1,\ldots,s_{2d})
\end{equation}
where $(s_1,\ldots, s_{2d})$ is repeated $\tfrac nd$ times, and $\rho^k s\ne s$ for $k=1,\ldots,d-1$.
Since $s\in R_n$,
$$(s_1^{-1}s_2\cdots s_{2d-1}^{-1}s_{2d})^{n/d}=e,$$
and since $\Gamma$ is torsion free, also
$$s_1^{-1}s_2\cdots s_{2d-1}^{-1}s_{2d}=e.$$
Together with $\rho^ks\ne s$ for $k=1,\ldots,d-1$, this shows that $(s_1,\ldots, s_{2d})\in R_d^d$. Conversely,
if $(s_1,\ldots, s_{2d})\in R_d^d$, then $s$ given by (\ref{e:apendixBestar}) will be in $R_n^d$. Hence $R_n^d$ and $R_d^d$ have the same number of elements proving that
$$\zeta_n=|R_n|= \sum_{d\mid n} |R_d^d|.$$
Hence, by the M\"obius inversion formula, (cf.~Theorem 2.9 in \cite{Apostol}) we have
$$|R_n^n|=\sum_{d\mid n} \mu(\frac n d)\zeta_d.$$
Recall that
$$R_n^n=\{s\in R_n: G_s=H_n=\{id\}\}.$$
Hence the orbit $G.s$ has $2n$ elements for all $s\in R_n^n$, which implies that $|R_n^n|$ is divisible by $2n$ proving ($iii$).
\end{proof}

\subsection*{\underline{Acknowledgments}}
We are very grateful to James Avery at the Center for Experimental Mathematics at University of Copenhagen for running some of our computer programs on a supercomputer.

%

\Addresses
\end{document}